\numberwithin{equation}{section}
\theoremstyle{plain}
\newtheorem{theorem}{Theorem}[section]
\newtheorem*{theorem*}{Theorem}
\newtheorem{lemma}[theorem]{Lemma}
\newtheorem{corollary}[theorem]{Corollary}
\newtheorem{definition}[theorem]{Definition}
\newtheorem{remark}[theorem]{Remark}
\DeclareMathOperator{\R}{\mathbb{R}}
\DeclareMathOperator{\C}{\mathbb{C}}
\DeclareMathOperator{\N}{\mathbb{N}}
\DeclareMathOperator{\Z}{\mathbb{Z}}
\DeclareMathOperator{\Id}{Id}
\DeclareMathOperator{\supp}{supp}
\DeclareMathOperator{\Real}{Re}
\DeclareMathOperator{\Imag}{Im}
\DeclareMathOperator{\loc}{loc}
\DeclareMathOperator{\sech}{sech}
\newcommand*\dd{\mathop{}\!\mathrm{d}}
\let \phi=\varphi
\let \epsilon=\varepsilon
\let \del=\partial
\title{Global-in-time Well-posedness of the One-dimensional \protect\\ Hydrodynamic Gross-Pitaevskii Equations without Vacuum}
\author{Robert Wegner}
\date{}
\begin{document}

\maketitle

\setcounter{tocdepth}{2}
\tableofcontents

\begin{abstract}
   We establish global-in-time well-posedness of the one-dimensional hydrodynamic Gross-Pitaevskii equations in the absence of vacuum in $(1 + H^s) \times H^{s-1}$ with $s \geq 1$. We achieve this by a reduction via the Madelung transform to the previous global-in-time well-posedness result for the Gross-Pitaevskii equation in \cite{KochLiao, KochLiao2022}. Our core result is a local bilipschitz equivalence of the relevant function spaces, which enables the transfer of results between the two equations.
\end{abstract}

\vspace{.8cm}

\noindent{\sl Keywords:} Gross-Pitaevskii equation, Madelung transform, Madelung equations, Euler-Korteweg system, non-zero boundary condition, global well-posedness

\vspace{.4cm}

\noindent{\sl AMS Subject Classification (2020):} 35Q55, 35Q31, 37K10, 76N10 \
 
\section{Introduction}
We consider in one dimension the Gross-Pitaevskii equation
\begin{equation} 
    \tag{GP}
    i \del_t q + \del_{xx} q - 2 (|q|^2 - 1) q = 0 \,,
\end{equation}
where $q(t,x): \R \times \R \longrightarrow \C$ represents an unknown wave function, subject to the boundary condition at infinity $\lim_{|x| \rightarrow \infty} |q(t,x)| = 1$. In the absence of vacuum, meaning that 
\begin{align*}
    |q| > 0 \,,
\end{align*} the Gross-Pitaevskii equation has a hydrodynamic formulation
\begin{equation}
    \tag{hGP}
    \Bigg\{
    \arraycolsep=1.4pt
    \begin{array}{rl}
        \del_t \rho + 2 \del_x (\rho v) & = 0 \,, \\
        \del_t v + \del_x (v^2) + 2 \del_x \rho & = \del_x \big( \del_x \big( \frac12 \frac{\del_x \rho}{\rho} \big) + \big( \frac12 \frac{\del_x \rho}{\rho} \big)^2 \big) \,,
    \end{array}
\end{equation}
which we call the hydrodynamic Gross-Pitaevskii equations.
Here the functions $\rho(t,x): \R \times \R \longrightarrow \R_+$ and $v(t,x): \R \times \R \longrightarrow \R$ may be understood as the unknown density and velocity of a quantum fluid. The system (hGP) belongs to the class of quantum hydrodynamical models, which may be used to model various physical phenomena such as Bose‐Einstein condensation \cite{Dalfovo1998, Grant1973}, superfluidity \cite{Feynman, Landau, Loffredo1993} and quantum semiconductors \cite{Gardner}. We refer to \cite{AHMZ, Bresch2017, Hientzsch} for more information on quantum hydrodynamical models and their relation to nonlinear Schr\"odinger equations.

The relation between (GP) and (hGP) is given by the Madelung transform 
\begin{equation} \label{eqn:1}
    \mathcal{M}(q) = \Big(|q|^2, \Imag\Big[\frac{\del_x q}{q}\Big]\Big) \,,
\end{equation}
which formally transforms a solution $q$ of (GP) into a solution $(\rho, v) = \mathcal{M}(q)$ of (hGP). Note that $\rho$ and $v$ are real-valued. One immediately sees that the Madelung transform $\mathcal{M}$ only makes sense when $|q| > 0$. In this case, we may recover $q$ from its Madelung transform by the formula
\begin{equation*}
    q = \sqrt{\rho} e^{i \phi} \,,
\end{equation*}
where $\phi$ is some spatial primitive of $v$, i.e.
\begin{equation*}
    \del_x \phi = v\,.
\end{equation*}
One furthermore sees that the inverse Madelung transform $(\rho, v) \longmapsto q$ is only defined up to multiplication with $\mathbb{S}^1$, i.e. a constant rotation in phase (see (\ref{eqn:201}) below for more details).
We refer the reader to \cite{CarlesDanchinSaut} for a survey of the Madelung transform and the hydrodynamic Gross-Pitaevskii equations.

\subsection{Related results for the Gross-Pitaevskii equation}

E.P. Gross \cite{Gross} and L.P. Pitaevskii \cite{Pitaevskii} introduced the Gross-Pitaevskii equation as a model for a Bose-Einstein Condensate, a type of Boson gas at very low density and temperature. For rigorous justification of the model, we refer to the mean-field approximation established by L. Erdős, B. Schlein, and H. Yau \cite{ErdosSchleinYau}, as well as references therein. As the Gross-Pitaevskii equation is a kind of defocusing cubic nonlinear Schr\"odinger equation, its well-posedness has been extensively studied. Due to the non-zero boundary condition, finite-energy solutions to (GP) can clearly not be in traditional function spaces that require global integrability, such as $L^p(\R)$. For integers $k \geq 1$ and in any dimension $n \geq 1$, P.E. Zhidkov \cite{Zhidkov1987} established local-in-time well-posedness in the so-called Zhidkov space $Z^k(\R^n)$, which is the closure of $\{u \in C_b^k(\R^n): \del_x u \in H^{k-1}(\R^n)\}$ under the norm
\begin{equation} \label{eqn:202}
    \|u\|_{Z^k(\R^n)} = \|u\|_{L^\infty(\R^n)} + \sum_{1 \leq |\alpha| \leq k} \|\del_x^\alpha u\|_{L^2(\R^n)}.
\end{equation}
This led to a first global-in-time well-posedness result in one dimension in $Z^1(\R)$, as the Ginzburg-Landau energy 
\begin{equation} \label{eqn:150}
    E(q) = \frac12 \int_{\R^n} |\del_x q|^2 + (|q|^2 - 1)^2 \dd x
\end{equation}
is conserved. The Gross-Pitaevskii equation (GP) can be interpreted as the Hamiltonian evolutionary equation associated with this energy.
The well-posedness result in Zhidkov spaces was expanded to the cases $n = 2, 3$ by C. Gallo \cite{Gallo}.
Global-in-time well-posedness in the energy space $\{q \in H^1_{\loc}(\R^n): E(q) < \infty\}$ equipped with the metric
\begin{align*}
    d_E(q, p) = \|q - p\|_{Z^1(\R^n) + H^1(\R^n)} + \||q|^2 - |p|^2\|_{L^2(\R^n)}
\end{align*}
was obtained by P. G\'erard \cite{Gerard2006, Gerard2008} for $n=1, 2, 3$, and for $n = 4$ under smallness assumptions. Later R. Killip, T. Oh, O. Pocovnicu, and M. Vișan \cite{RomainTadahiroOanaMonica} established global-in-time well-posedness in the energy space for $n = 4$. More recently, the problem has been studied by P. Antonelli, L.E. Hientzsch, and P. Marcati \cite{Antonelli2023} in $n = 2, 3$ for general nonlinearities $f$ satisfying a Kato-type assumption. Local-in-time well-posedness is obtained in the energy space and extended globally in the defocusing case under some further assumptions. Regarding the case of non-finite energy, H. Pecher \cite{Pecher2012} established global-in-time well-posedness in three dimensions in $1 + H^s(\R^3)$ for $s \in (\frac56, 1)$.

We are concerned with the case $n = 1$. For $s \in \R$, we associate with solutions of (GP) the energy functionals
\begin{equation} \label{eqn:291}
    E^s(q) = \frac12 \big\|\del_x q\big\|_{H^{s-1}(\R)}^2 + \frac12 \big\||q|^2 - 1\big\|_{H^{s-1}(\R)}^2.
\end{equation}
Note that indeed $E^1 = E$. Our results are consequences of a pair of papers \cite{KochLiao, KochLiao2022} by
H. Koch and X. Liao, where for $s \geq 0$ they proved the global-in-time well-posedness of (GP) in the complete metric space
\begin{equation} \label{eqn:292}
    X^s = \{q \in H^s_{\loc}(\R): E^s(q) < \infty\} \, / \, \mathbb{S}^1 \,,
\end{equation}
equipped with the distance function
\begin{equation} \label{eqn:2}
    d^s(q, p) = \left( \int_{\R} \inf_{\lambda \in \mathbb{S}^1} \|\sech(y - \cdot) (\lambda q - p)\|_{H^s}^2 \dd y \right)^{\frac12} \,.
\end{equation}
We summarize several of their results, taken from \cite[Theorem 1.2, 1.3, Lemma 6.1]{KochLiao} and \cite[Theorem 1.5]{KochLiao2022}, in the following theorem.
\begin{restatable}[Global-in-time well-posedness of (GP) \cite{KochLiao, KochLiao2022}]{theorem}{} \label{thm:1}
    Let $s \geq 0$. The pair $(X^s, d^s)$ is a complete metric space, and the energy functional $E^s: X^s \longrightarrow \R$ is continuous. There exists a constant $C_0 > 0$ such that $d^s(1, q) \leq C_0 \sqrt{E^s(q)}$ for all $q \in X^s$.
    
    The Gross-Pitaevskii equation (GP) is globally-in-time well-posed in the metric space $(X^s, d^s)$ in the following sense: For any initial data $q_0 \in X^s$ there exists a unique global-in-time solution $q \in C(\R; X^s)$ of (GP) (see Definition \ref{def:1} below). For any $t \geq 0$ the Gross-Pitaevskii flow map $X^s \ni q_0 \mapsto q \in C([-t, t]; X^s)$ is continuous. There exists a constant $\tilde{C}_0(s, E^s(q_0))$ such that 
    \begin{equation} \label{eqn:155}
        \sup_{t \in \R} E^s(q(t)) \leq \tilde{C}_0 (s, E^s(q_0)) \, E^s(q_0) \,,
    \end{equation}
    and in the case $s \geq 1$ the energy $E(q(t))$, defined in (\ref{eqn:150}), is conserved.
\end{restatable}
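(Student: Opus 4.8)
Since the statement only collects results already proved by Koch and Liao, the plan is to match each clause to the precise assertion in \cite{KochLiao, KochLiao2022} and to check that the normalizations there agree with the definitions fixed in (\ref{eqn:291})--(\ref{eqn:292}). First I would record that the completeness of $(X^s, d^s)$ and the continuity of $E^s \colon X^s \to \R$ are the metric-space part of \cite[Theorem 1.2]{KochLiao}; here the only thing to verify is that the windowed distance (\ref{eqn:2}), built from the $\sech$-localization integrated in $y$, and the $\mathbb{S}^1$-quotient in (\ref{eqn:292}) coincide literally with the ones used there.

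Next, the coercivity bound $d^s(1,q) \le C_0 \sqrt{E^s(q)}$ is \cite[Lemma 6.1]{KochLiao}. Heuristically it should hold because $d^s(1,q)^2 = \int_\R \inf_{\lambda \in \mathbb{S}^1} \|\sech(y-\cdot)(\lambda - q)\|_{H^s}^2 \dd y$ measures the local deviation of $q$ from a constant of modulus one, and precisely this is controlled by the two pieces of $E^s$: the derivative term $\|\partial_x q\|_{H^{s-1}}$ bounds the oscillation of $q$, while $\||q|^2 - 1\|_{H^{s-1}}$ bounds the deviation of its modulus from $1$, with the leftover constant phase absorbed by the infimum over $\lambda$.

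The genuine analytic content --- existence and uniqueness of a global solution $q \in C(\R; X^s)$ in the sense of Definition \ref{def:1}, continuity of the flow map, the growth estimate (\ref{eqn:155}), and conservation of $E = E^1$ for $s \ge 1$ --- is \cite[Theorem 1.3]{KochLiao} together with \cite[Theorem 1.5]{KochLiao2022}, whose proofs I would not reproduce; I would only transcribe the a priori bound and confirm the asserted dependence of $\tilde C_0$ on $s$ and $E^s(q_0)$. The main obstacle is therefore bookkeeping rather than analysis: the two sources use their own conventions for the nonlinearity, the energy, and the distance, so one must confirm that each is consistent with the present setup, making $X^s$, $C_0$, and $\tilde C_0$ transfer verbatim. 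Particular care is needed at the borderline $s = 1$, where the merely continuous functional $E^s$ on $X^s$ must be kept distinct from the genuinely conserved quantity $E$.
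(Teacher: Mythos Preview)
Your proposal is correct and matches the paper's approach exactly: the paper does not prove Theorem~\ref{thm:1} at all but simply states it as a summary of results from \cite{KochLiao, KochLiao2022}, with the precise attributions you list. The only notational wrinkle the paper flags (in a remark after Lemma~\ref{lem:3}) is the one you anticipate, namely that the energies $E^s$ in \cite{KochLiao} correspond to $\sqrt{2E^s}$ in the present convention, which explains the square root in the bound $d^s(1,q) \le C_0 \sqrt{E^s(q)}$.
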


\subsection{Related results for the hydrodynamic Gross-Pitaevskii equations}

The question of equivalence between Schr\"odinger equations and quantum hydrodynamical equations is relevant for the validity of classical approaches to quantum mechanics such as de Broglie-Bohm Theory \cite{Bohm} and stochastic mechanics \cite{Nelson}. It became a topic of controversy when Wallstrom raised some objections \cite{Wallstrom3, Wallstrom1, Wallstrom2}. We recommend \cite{Reddiger2023} for a review of these issues. The difficulty arises from possible vacuum regions, which complicate the definition of the inverse Madelung transform. This can be resolved via an additional ``Takabayasi's quantization condition'' \cite{Bianchini, Takabayasi}, which requires the winding numbers of the velocity field on closed loops to be quantized. Note that this condition trivially holds in one dimension. Wallstrom also raised objections regarding uniqueness \cite{Wallstrom2}. There are indeed non-uniqueness results for weak solutions to quantum hydrodynamical systems, which are related to a change in the number of non-vacuum connected components \cite{Markovich}.

Nevertheless, P. Antonelli and P. Marcati \cite{AntonelliMarcati, AntonelliMarcati2010} constructed weak solutions vanishing at infinity with finite but arbitrarily large energy, meaning that vacuum may appear, in $n = 2, 3$ for a general quantum hydrodynamical system. They use a polar decomposition technique in order to define the velocity field in the vacuum regions. In collaboration with H. Zheng, they extended this to $n = 1$ via a purely hydrodynamical approach \cite{AntonelliMarcatiZheng2019, Antonelli2021}. An alternative approach to finite energy weak solutions is explored in \cite{Antonelli2019, AntonelliMarcatiScandone}.

The well-posedness of the Euler-Korteweg system, a generalization of the compressible Euler equations which includes capillarity effects and contains (hGP) as a special case, was studied in higher dimensions by C. Audiard and B. Haspot \cite{Audiard2021, AudiardHaspot}. 
Similar to the approach we take is a paper by C. Audiard \cite{Audiard}, in which global-in-time well-posedness of (hGP) under smallness assumptions is shown in certain spaces for $n \geq 2$ by applying the Madelung transform to solutions to (GP). While they used scattering results to bound the solution away from $0$, we use a rather elementary argument that leads us to the aforementioned energy bound $E < \frac43$. 

A closely related paper is a work by H. Mohamad \cite{Mohamad}, which in \cite[Proposition 1.1]{Mohamad} states similar relations to our Theorem \ref{thm:3}, and then uses
a well-posedness result for (GP) to obtain a well-posedness result for (hGP) in $(1 + H^{k+1}) \times H^k$ with $k \in \N_{\geq 0}$ up to the appearance of vacuum. We discuss the similarities and differences in Remark \ref{rmk:3}.

\subsection{Functional analytic framework}

Our goal is to show a novel global-in-time well-posedness result for (hGP) with $(\rho, v) \in (1 + H^s) \times H^{s-1}$ where $s \geq 1$ and $n = 1$.
We achieve this under the assumptions $s \geq 1$ and $E < \frac43$ by passing the well-posedness result for (GP) in Theorem \ref{thm:1} through the Madelung transform (\ref{eqn:1}). The first assumption $s \geq 1$ ensures sufficient regularity for the energy $E$ to be defined, and for (hGP) to be interpretable in the sense of distributions. As an example, consider that $s \geq 1$ implies $v \in L^2(\R)$, and so the problematic square of a distribution $v^2$ appearing in (hGP)\textsubscript{2} does indeed exist. The second assumption $E < \frac43$ can also be understood as a ``regularity'' assumption: solutions below the critical energy of $\frac43$ can not have vacuum, that is points or intervals where $|q| = \sqrt{\rho} = 0$ (see Corollary \ref{cor:2} below). As a result, singularities are avoided in the hydrodynamic formulation. Due to conservation of energy, the absence of vacuum is guaranteed for all times. Note that this energy assumption is sharp in the sense that the black soliton solution $q(t,x) = \tanh(x)$ to (GP) has a zero $\tanh(0) = 0$, while also having energy $E(\tanh) = \frac43$. 

We collect now some results that ensure the absence of vacuum, given certain energy bounds. We start with the following lemma.

\noindent
\begin{minipage}[c]{0.7\linewidth}
\begin{lemma} \label{lem:8}
    Consider the function $\tilde{b}: [0, 1] \longrightarrow [0, \frac43]$ defined by
    \begin{equation*}
        \tilde{b}(\delta) = \frac43 - 2 \delta + \frac23 \delta^3.
    \end{equation*}
    This is a strictly decreasing bijection (see Fig. 1) whose inverse we denote by \\ $\tilde{\delta}(b): [0,\frac43] \longrightarrow [0, 1]$. We have
    \begin{align*}\setcounter{tocdepth}{2}
        \tilde{b}(\delta) &= \min \big\{ E(q): q \in H^1_{\loc}(\R), \, \inf_{x \in \R} |q(x)| \leq \delta \big\} \,, \\
        \tilde{\delta}(b) &= \min \big\{ \inf_{x \in \R} |q(x)| : q \in H^1_{\loc}(\R), E(q) \leq b \big\} \,.
    \end{align*}
\end{lemma}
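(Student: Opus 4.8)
The plan is to separate the elementary calculus statement about $\tilde b$ from the two variational identities, and to deduce the second identity as a formal consequence of the first.

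First I would dispatch the claim that $\tilde b$ is a strictly decreasing bijection onto $[0,\tfrac43]$. Since $\tilde b'(\delta) = -2 + 2\delta^2 = 2(\delta^2 - 1) \le 0$ on $[0,1]$ with equality only at $\delta = 1$, the function is continuous and strictly decreasing, and $\tilde b(0) = \tfrac43$, $\tilde b(1) = 0$. Hence $\tilde b \colon [0,1] \to [0,\tfrac43]$ is a bijection and the inverse $\tilde\delta$ is well-defined and strictly decreasing.

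The core is the first variational identity. Given $q \in H^1_{\loc}(\R)$ with $E(q) < \infty$, the pointwise gradient inequality $|\del_x |q|| \le |\del_x q|$ (valid a.e.) gives $E(|q|) \le E(q)$, so it suffices to minimize over real nonnegative $\rho = |q|$, for which $\inf \rho = \inf |q|$. Finite energy forces $\rho^2 - 1 \in L^2(\R)$ while $\del_x \rho \in L^2(\R)$ makes $\rho$ globally $\tfrac12$-H\"older, hence uniformly continuous; a standard argument then yields $\rho(x) \to 1$ as $|x| \to \infty$, and since $m := \inf\rho \le \delta < 1$ the infimum is attained at some finite point $x_0$. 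The decisive estimate is a completion of squares: writing $F(\rho) = \rho - \tfrac13 \rho^3$, I would use the pointwise bound $\tfrac12\big((\del_x\rho)^2 + (1-\rho^2)^2\big) \ge |\del_x\rho|\,|1-\rho^2| = |\del_x F(\rho)|$ and integrate over each of the half-lines $(-\infty, x_0]$ and $[x_0, \infty)$. On each side the integral telescopes, $\int |\del_x F(\rho)| \ge |F(1) - F(m)| = \tfrac23 - m + \tfrac13 m^3$, so $E(q) \ge E(\rho) \ge \tfrac43 - 2m + \tfrac23 m^3 = \tilde b(m) \ge \tilde b(\delta)$, the last step using $m \le \delta$ and monotonicity of $\tilde b$. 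Equality in the completion-of-squares bound forces the Bogomolny ODE $\del_x \rho = \pm(1 - \rho^2)$, whose monotone solution is $\rho(x) = \tanh\big(|x - x_0| + \operatorname{arctanh}\delta\big)$; this profile satisfies $\inf\rho = \delta$ and $E(\rho) = \tilde b(\delta)$, exhibiting an explicit minimizer and establishing the first identity.

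Finally, the second identity follows formally. For any admissible $q$ with $E(q) \le b$, the first identity gives $\tilde b(\inf|q|) \le E(q) \le b$, and applying the decreasing inverse $\tilde\delta = \tilde b^{-1}$ reverses the inequality to $\inf|q| \ge \tilde\delta(b)$; the profile above with $\delta = \tilde\delta(b)$ realizes equality with $E = \tilde b(\tilde\delta(b)) = b$, so the minimum is exactly $\tilde\delta(b)$. The main obstacle I anticipate is the rigor around the boundary behavior—establishing $\rho \to 1$ at infinity and the attainment of the infimum at a finite point—together with the careful sign bookkeeping in the completion-of-squares estimate when $\rho$ is not assumed monotone; that inequality itself is the key idea and handles non-monotone competitors automatically, since only the endpoint values of $F$ enter.
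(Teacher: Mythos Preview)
Your proof is correct and takes a genuinely different route from the paper's. The paper proves this as a special case of a more general Lemma in the appendix: it first reduces to a half-line minimization problem with boundary condition $q(0)=\nu$, then runs the direct method (bounded minimizing sequence, Banach--Alaoglu, compact Sobolev embedding, Fatou) to produce a minimizer $p_\nu$, derives the Euler--Lagrange equation, and invokes an external classification result from \cite{BethuelGravejatSaut} to identify $p_0=\tanh$; the general $p_\nu$ is then obtained by a restriction argument, and finally the energy $E(q_\delta)=\tfrac43-2\delta+\tfrac23\delta^3$ is computed by explicit integration of $\sech^4$.

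Your Bogomolny-type argument is more direct and self-contained: the pointwise inequality $\tfrac12\big((\partial_x\rho)^2+(1-\rho^2)^2\big)\ge |\partial_x F(\rho)|$ with $F(\rho)=\rho-\tfrac13\rho^3$ yields the sharp lower bound $E(q)\ge\tilde b(m)$ in one line after splitting at the minimum point, and the saturating profile drops out of the equality case without any compactness, Euler--Lagrange analysis, or external citations. The price is that this trick is tailored to $s=1$, whereas the paper's appendix lemma is set up to simultaneously handle the non-sharp bound for $\tfrac12<s<1$ (via different tools). One small stylistic point: your phrase ``equality \ldots\ forces the Bogomolny ODE'' slightly overstates what is needed---you only need that the $\tanh$ profile saturates both the AM--GM and the total-variation inequalities, which it does; full characterization of minimizers is not required for the lemma.
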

\end{minipage} % no space if you would like to put them side by side
\begin{minipage}[c]{0.3\linewidth}
\captionsetup{labelformat=custom, labelsep=custom}
\begin{figure}[H]
    \centering
    \vspace{-0.6cm}
    \begin{tikzpicture}[scale=1.5]
      \draw[->] (-0.07, 0) -- (1.5, 0) node[right] {$\delta$};
      \draw[->] (0, -0.07) -- (0, 1.7) node[above] {$b$};
      \draw[-] (-0.07, 1.333) -- (0.07, 1.333) node[label={[label distance=0.5mm]180:{$\frac43$}}] {};
      \draw[-] (1, -0.07) -- (1, 0.07) node[label={[label distance=0.5mm]270:{$1$}}] {};
      \draw[-] (0, 0) -- (0, 0) node[label={[label distance=-1.0mm]250:{$0$}}] {};
      \draw[scale=1, domain=0:1, smooth, variable=\x, blue] plot ({\x}, {1.333 - 2*\x + 0.6666*\x*\x*\x});
    \end{tikzpicture}
    \vspace{-0.45cm}
    \captionsetup{width=1\linewidth}
    \caption{{ Graph of $\tilde{b}$}}
\end{figure}
\end{minipage}
This lemma is a stronger version of \cite[Lemma 1]{BethuelGravejatSaut}. The proof of a slightly more general Lemma \ref{lem:0} is given in the appendix.
As a consequence of Lemma \ref{lem:8}, the ``energy gap'' $\frac43 - E(q)$ yields an explicit lower bound for the distance of $|q|$ to zero. Due to conservation of the energy $E(q)$, we obtain the following corollary.
\begin{corollary} \label{cor:2}
    For any solution $q \in C(\R; X^1)$ of (GP) (see Definition \ref{def:1}), we have
    \begin{equation} \label{eqn:158}
        E(q_0) < b < \frac43 \Longrightarrow \inf_{(t, x) \in \R^2} |q(t, x)| > \tilde{\delta}(b) > 0\,.
    \end{equation}
\end{corollary}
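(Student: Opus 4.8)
The plan is to combine the energy characterization of the vacuum threshold in Lemma \ref{lem:8} with the conservation of energy guaranteed by Theorem \ref{thm:1}. Since $q \in C(\R; X^1)$ solves (GP) and $s = 1 \geq 1$, the final assertion of Theorem \ref{thm:1} gives conservation of the energy along the flow, so that $E(q(t)) = E(q_0)$ for every $t \in \R$. Each time slice $q(t)$, being an element of $X^1$, lies in $H^1_{\loc}(\R)$ with $E(q(t)) = E(q_0) < b$; note also that the modulus $|q(t,\cdot)|$ is well defined on the equivalence class and continuous, so $\inf_x |q(t,x)|$ makes sense. First I would record the monotonicity consequences of Lemma \ref{lem:8}: since $\tilde{b}$ is a strictly decreasing bijection $[0,1] \to [0,\frac43]$, its inverse $\tilde{\delta}$ is a strictly decreasing bijection $[0,\frac43] \to [0,1]$ with $\tilde{\delta}(\frac43) = 0$; hence $b < \frac43$ already yields $\tilde{\delta}(b) > \tilde{\delta}(\frac43) = 0$, which is the rightmost inequality in (\ref{eqn:158}).

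The key step is to convert the minimization identity of Lemma \ref{lem:8} into a pointwise-in-time lower bound. I would fix an auxiliary level $b'$ with $E(q_0) < b' < b$, and claim that $\inf_x |q(t,x)| > \tilde{\delta}(b')$ for every fixed $t$. Indeed, were $\inf_x |q(t,x)| \leq \tilde{\delta}(b')$, the first identity in Lemma \ref{lem:8}---which states that $\tilde{b}(\delta)$ is the minimal energy among $H^1_{\loc}$ functions whose modulus dips to $\delta$ or below---would force $E(q(t)) \geq \tilde{b}(\tilde{\delta}(b')) = b'$, contradicting $E(q(t)) = E(q_0) < b'$.

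Finally I would take the infimum over $t$ and invoke strict monotonicity to upgrade to a uniform strict bound. From the pointwise claim, $\inf_{(t,x) \in \R^2} |q(t,x)| \geq \tilde{\delta}(b')$; and since $b' < b$ gives $\tilde{\delta}(b') > \tilde{\delta}(b)$, we conclude $\inf_{(t,x)} |q(t,x)| \geq \tilde{\delta}(b') > \tilde{\delta}(b)$, which is exactly (\ref{eqn:158}).

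The main subtlety---and the reason for introducing the interposed level $b'$---is precisely this last upgrade. A direct argument at level $b$ delivers only $\inf_x |q(t,x)| > \tilde{\delta}(b)$ separately for each $t$, and the infimum over $t$ of a family of strict inequalities could a priori collapse to equality. Inserting $b'$ strictly between $E(q_0)$ and $b$ converts the family of strict pointwise bounds into a single non-strict bound at level $\tilde{\delta}(b')$ that is uniform in both $t$ and $x$, after which strict monotonicity of $\tilde{\delta}$ restores strictness against $\tilde{\delta}(b)$. I do not expect any further analytic difficulty; everything beyond this is the bookkeeping that $|q(t,\cdot)|$ and $E$ are phase-invariant, so all quantities descend to the quotient $X^1$.
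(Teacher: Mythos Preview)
Your argument is correct and follows exactly the route the paper intends: Lemma~\ref{lem:8} combined with conservation of $E$ from Theorem~\ref{thm:1}. The paper does not spell out a proof at all---it simply declares the corollary an immediate consequence---so your write-up is a careful fleshing out of that one-line claim, including the auxiliary level $b'$ to pass from pointwise-in-$t$ strict inequality to a uniform strict bound. One minor simplification: instead of choosing $b'\in (E(q_0),b)$ and using the first identity of Lemma~\ref{lem:8}, you could invoke the second identity directly at level $E(q_0)$, which gives $\inf_x|q(t,x)|\geq \tilde\delta(E(q_0))>\tilde\delta(b)$ for every $t$ in one stroke; but this is a cosmetic difference.
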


We thus consider solutions $q$ of (GP) in $X^s$, $s \geq 1$ with energy
\begin{equation} \label{eqn:3}
    E(q) < \frac43 \,,
\end{equation}
recalling the definitions (\ref{eqn:150}) - (\ref{eqn:2}) of $X^s$ and $E$.
We look for solutions $(\rho, v)$ of (hGP) in the function space
\begin{equation} \label{eqn:4}
    \mathcal{Y}^s = (1 + H^s(\R;\R)) \times H^{s-1}(\R;\R) \,,
\end{equation}
equipped with the metric
\begin{equation} \label{eqn:5}
    \theta^s((\rho, v), (\eta, w)) = \|\rho - \eta\|_{H^s} + \|v - w\|_{H^{s-1}} \,.
\end{equation}
We define the analogous energy
\begin{equation}
    \mathcal{E}(\rho, v) = E(\mathcal{M}^{-1}(\rho, v)) = \frac12 \int_{\R} \frac{(\del_x \rho)^2}{4 \rho} + \rho v^2 + (\rho - 1)^2 \dd x.
\end{equation}
Here the inverse Madelung transform is defined as
\begin{equation} \label{eqn:201}
    \mathcal{M}^{-1}(\rho, v) (x) = \big(\sqrt{\rho(x)} e^{i \phi(x)}\big) \, \mathbb{S}^1 = \big\{\lambda \sqrt{\rho(x)} e^{i \phi(x)}: \lambda \in \mathbb{S}^1\big\} \,,
\end{equation}
where $\phi$ is any spatial primitive of $v$, i.e. $\del_x \phi = v$. Note that the energy $E$ is indeed well-defined on equivalence classes under multiplication by $\mathbb{S}^1$, and furthermore that the space $X^s$ consists of such equivalence classes, and is hence a suitable domain for the Madelung transform $\mathcal{M}$, given in (\ref{eqn:1}).

In order to transform solutions of (GP) into solutions of (hGP) via the Madelung transform, we establish an equivalence between the relevant function spaces $(X^s, d^s)$ and $(\mathcal{Y}^s, \theta^s)$. Specifically, we prove a local bilipschitz equivalence between the distance functions $d^s$ and $\theta^s$ for all $s > \frac12$. While our main result only holds for $s \geq 1$, our approach has the potential to be extended to the case $\frac12 < s < 1$ if one finds a way to make sense of (hGP)\textsubscript{2} in such a low regularity setting. For example, this may be possible via a local smoothing result, as in \cite{KillipVisan} (see Remark \ref{rmk:4} below). When $\frac12 < s < 1$, the absence of vacuum can still be ensured by a smallness assumption of the form
\begin{equation*} \label{eqn:6}
    E^\mu(q) < \epsilon_0(\mu) << 1 \,,
\end{equation*}
where $\mu > \frac12$ (see (\ref{eqn:156}) below). This smallness condition can also replace $E < \frac43$ in the case $s \geq 1$, $\mu \leq s$.
Specifically, we have the following Lemma \ref{lem:8.1} as a replacement for Lemma \ref{lem:8}.
\begin{lemma} \label{lem:8.1}
    For $\delta \in [0, 1]$ and $\mu > \frac12$ define
    \begin{equation*}
        E^\mu_\delta = \inf\big\{E^\mu(q): q \in H^\mu_{\loc}, \inf_{x \in \R} |q(x)| \leq \delta\big\} \,.
    \end{equation*}
    Then $E^\mu_1 = 0$, the function $\delta \mapsto E^\mu_\delta$ is decreasing, and there exists a constant $\tilde{C}(\mu) > 0$ so that
    \begin{equation} \label{eqn:26}
        E^\mu_\delta \geq \frac{(1 - \delta)^2}{\tilde{C}(\mu)}\,.
    \end{equation}
\end{lemma}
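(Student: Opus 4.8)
The plan is to establish the three assertions in increasing order of difficulty, the last being the only substantial one. For $E^\mu_1 = 0$ I would simply test with the constant $q \equiv 1$: it lies in $H^\mu_{\loc}$, satisfies $\inf_x |q(x)| = 1 \leq 1$, and has $\del_x q = 0$ and $|q|^2 - 1 = 0$, so $E^\mu(q) = 0$; since $E^\mu \geq 0$ this forces $E^\mu_1 = 0$. Monotonicity is equally soft: if $\delta_1 \leq \delta_2$ then the admissible set $\{q : \inf_x|q| \leq \delta_1\}$ is contained in $\{q : \inf_x|q| \leq \delta_2\}$, so the infimum over the smaller set dominates and $E^\mu_{\delta_1} \geq E^\mu_{\delta_2}$.

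For the lower bound (\ref{eqn:26}) I would reduce everything to a single $L^\infty$ estimate on $f := |q|^2 - 1$. Fix any admissible $q$. Since $\mu > \frac12$ we have $H^\mu_{\loc} \hookrightarrow C^0$, so $|q|$ is continuous and the constraint $\inf_x|q| \leq \delta$ produces (up to an arbitrarily small loss we suppress) a point $x_0$ with $|q(x_0)| \leq \delta$; hence
\begin{equation*}
    \|f\|_{L^\infty} \geq |f(x_0)| = 1 - |q(x_0)|^2 \geq 1 - \delta^2 \geq 1 - \delta .
\end{equation*}
Consequently it suffices to prove a bound of the form $\|f\|_{L^\infty}^2 \leq C(\mu)\, E^\mu(q)$, for then $(1-\delta)^2 \leq \|f\|_{L^\infty}^2 \leq C(\mu)\, E^\mu(q)$ and we may take $\tilde{C}(\mu) = C(\mu)$.

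To prove $\|f\|_{L^\infty}^2 \lesssim_\mu E^\mu(q)$ I would show that $f$ in fact lies in $H^\mu$ with $\|f\|_{H^\mu}^2 \lesssim_\mu E^\mu(q)$ and then invoke the one-dimensional embedding $H^\mu \hookrightarrow L^\infty$, valid precisely because $\mu > \frac12$. Using $\|f\|_{H^\mu}^2 \simeq \|f\|_{H^{\mu-1}}^2 + \|\del_x f\|_{H^{\mu-1}}^2$, the first summand is at most $2E^\mu(q)$ by definition of the energy. For the second I would use the identity $\del_x f = 2\,\Real(\bar q\, \del_x q)$ together with a fractional Leibniz estimate in $H^{\mu-1}$: since $\mu > \frac12$ the space $H^\mu$ multiplies $H^{\mu-1}$ (note $-\mu < \mu - 1 < \mu$), so, writing $q = \lambda + (q - \lambda)$ for a suitable $\lambda \in \mathbb{S}^1$,
\begin{equation*}
    \|\del_x f\|_{H^{\mu-1}} \lesssim_\mu \big(1 + \|q - \lambda\|_{H^\mu}\big)\,\|\del_x q\|_{H^{\mu-1}} \lesssim_\mu \big(1 + \|q-\lambda\|_{H^\mu}\big)\sqrt{E^\mu(q)} .
\end{equation*}
This already settles the cleanest range $\mu > \frac32$ unconditionally, where even the duality bound $|f(x_0)| \lesssim \|f\|_{H^{\mu-1}}$ — pairing $f$ with the Dirac mass $\delta_{x_0} \in H^{1-\mu}(\R)$, legitimate since $1-\mu < -\frac12$ — suffices and no derivative term is needed.

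I expect the main obstacle to be the factor $\|q - \lambda\|_{H^\mu}$, since (\ref{eqn:26}) demands a constant $\tilde C(\mu)$ depending on $\mu$ alone, whereas in these non-$L^2$ function spaces $q - \lambda$ need not lie in $L^2$ and its $H^\mu$-norm is not controlled by the energy. I would resolve this by the standard observation that (\ref{eqn:26}) is trivial once the energy is large: as $(1-\delta)^2 \leq 1$, any $q$ with $E^\mu(q) \geq \epsilon_0(\mu)$ automatically satisfies $E^\mu(q) \geq \epsilon_0(\mu)(1-\delta)^2$, so it remains only to treat $E^\mu(q) \leq \epsilon_0(\mu)$. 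In that small-energy regime a bootstrap/continuity argument — or, equivalently, the localization built into the $\sech$-weighted distance $d^s$ — yields an a priori bound $\|q - \lambda\|_{L^\infty} \leq 1$, hence $\|f\|_{L^\infty} \leq 3$, together with the control of the relevant multiplier norm needed to close the product estimate with a $\mu$-dependent constant. Taking $\tilde{C}(\mu) = \max\{C(\mu), \epsilon_0(\mu)^{-1}\}$ then gives (\ref{eqn:26}) in full. I view this interplay between the product estimate and the a priori boundedness of $q$ in spaces lacking global $L^2$ control as the delicate point; the softer-looking alternative of interpolating $\|f\|_{L^\infty} \lesssim \|f\|_{H^{\mu-1}}^{1-\theta}\|f\|_{H^\mu}^{\theta}$ reduces to the same estimate and offers no genuine shortcut.
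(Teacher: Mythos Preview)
Your handling of $E^\mu_1=0$ and the monotonicity is identical to the paper's. The divergence is in the lower bound.

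Your primary route --- bounding $\|f\|_{L^\infty}$ via $\|f\|_{H^\mu}$ and then controlling $\|\partial_x f\|_{H^{\mu-1}}$ by a product estimate --- runs into the circularity you yourself flag: the product estimate needs a multiplier norm on $q$ (either $\|q-\lambda\|_{H^\mu}$ or $\|q\|_{L^\infty}$), but for $q\in X^\mu$ one has \emph{no} a priori global control of either quantity, and in fact $q-\lambda\notin L^2(\mathbb R)$ in general (think of $q=e^{i\theta}$ with $\theta$ having distinct limits at $\pm\infty$). Your proposed fix by a ``bootstrap/continuity argument'' is not an argument: to bootstrap you would need some starting bound on $\|q\|_{L^\infty}$, and that is essentially equivalent to bounding $\|f\|_{L^\infty}$, which is what you are trying to prove. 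The large/small energy split does not help here, since the small-energy regime is precisely where the difficulty lies.

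The paper sidesteps all of this by \emph{localizing} and by importing a nontrivial black box. It invokes the Koch--Liao inequality $d^\mu(1,q)\le C(\mu)\sqrt{E^\mu(q)}$ (Lemma~\ref{lem:3}), combines it with Lemma~\ref{lem:36} to get $\inf_{\lambda\in\mathbb S^1}\|q-\lambda\|_{W^{\mu,2}(B_k)}\le C(\mu)\sqrt{E^\mu(q)}$ on each unit ball $B_k$, and then applies the local Sobolev embedding $W^{\mu,2}(B_k)\hookrightarrow L^\infty(B_k)$ (valid since $\mu>\tfrac12$). Taking the ball containing a near-minimizer of $|q|$ yields
\[
1-\delta \;\le\; \sup_k \inf_{\lambda\in\mathbb S^1}\|q-\lambda\|_{L^\infty(B_k)} \;\le\; C(\mu)\sqrt{E^\mu(q)},
\]
which is exactly \eqref{eqn:26}. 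You do gesture at ``the localization built into the $\sech$-weighted distance $d^s$'' as an alternative, and that \emph{is} the paper's method --- but note that once you have this local $L^\infty$ control on $q-\lambda$, the conclusion follows in one line and your entire $\|f\|_{H^\mu}$ machinery becomes unnecessary. The key input you are missing is the estimate $d^\mu(1,q)\lesssim\sqrt{E^\mu(q)}$, which is where the real work (done in \cite{KochLiao}) is hidden.
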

This Lemma is also a special case of Lemma \ref{lem:0}.
By (\ref{eqn:155}) there exists for any $\mu > \frac12$ a constant $c(\mu) > 0$ such that \begin{equation} 
\label{eqn:7}
    E^\mu(q_0) < \epsilon \Longrightarrow \sup_{t \in \R} E^\mu(q(t)) < c(\mu) \, \epsilon
\end{equation}
for all $\epsilon \in (0, 1)$ and any solution $q \in C(\R; X^\mu)$ of (GP).
Not attempting to obtain a sharp bound, we state the analogous of Corollary \ref{cor:2}.
\begin{corollary} \label{cor:3}
    Let $\mu > \frac12$ and define
    \begin{equation} \label{eqn:156}
        \epsilon_0(\mu) = \max\left\{ \frac12, \frac{1}{4 c(\mu) \tilde{C}(\mu)} \right\} \,.
    \end{equation}
    For any solution $q \in C(\R; X^\mu)$ of (GP) (see Definition \ref{def:1}), we have
    \begin{equation} \label{eqn:157}
        E^\mu(q_0) < \epsilon < \epsilon_0(\mu) \Longrightarrow \inf_{(t, x) \in \R^2} |q(t, x)| > 1 - \sqrt{\epsilon} \sqrt{c(\mu) \tilde{C}(\mu)} > \frac12\,.
    \end{equation}
\end{corollary}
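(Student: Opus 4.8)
The plan is to deduce Corollary \ref{cor:3} from the two facts preceding it in exactly the way Corollary \ref{cor:2} is deduced from Lemma \ref{lem:8}: first propagate the smallness of the initial energy to a uniform-in-time energy bound via (\ref{eqn:7}), and then convert that bound into a pointwise lower bound on $|q|$ by contraposing the vacuum estimate (\ref{eqn:26}) of Lemma \ref{lem:8.1}. No new analytic input is required; the argument is a chain of elementary implications together with one algebraic choice of threshold.

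In detail, I would argue as follows. Let $q \in C(\R; X^\mu)$ solve (GP) with $E^\mu(q_0) < \epsilon < \epsilon_0(\mu)$. The choice of $\epsilon_0(\mu)$ in (\ref{eqn:156}) keeps $\epsilon$ in the range $(0,1)$ on which (\ref{eqn:7}) is valid, so that (\ref{eqn:7}) yields $M := \sup_{t \in \R} E^\mu(q(t)) < c(\mu)\,\epsilon =: b$. Next I fix a time $t$ and recall that $E^\mu_\delta = \inf\{E^\mu(p): \inf_x|p(x)| \le \delta\}$, so $\inf_x|q(t,x)| \le \delta$ would force $E^\mu(q(t)) \ge E^\mu_\delta \ge (1-\delta)^2/\tilde{C}(\mu)$ by (\ref{eqn:26}). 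Contraposing, whenever $E^\mu(q(t)) < (1-\delta)^2/\tilde{C}(\mu)$ one must have $\inf_x|q(t,x)| > \delta$. It then remains only to choose $\delta$ so as to put the energy bound $M < b$ into this form. The natural choice is $\delta = 1 - \sqrt{\tilde{C}(\mu)\,b} = 1 - \sqrt{\epsilon}\,\sqrt{c(\mu)\tilde{C}(\mu)}$, for which $(1-\delta)^2/\tilde{C}(\mu) = b$ exactly. One checks $\delta \in [0,1]$, i.e. $\sqrt{\epsilon\,c(\mu)\tilde{C}(\mu)} \le 1$, which again follows from $\epsilon < \epsilon_0(\mu)$, so (\ref{eqn:26}) is applicable at this $\delta$; then for every $t$ we have $E^\mu(q(t)) \le M < b = (1-\delta)^2/\tilde{C}(\mu) \le E^\mu_\delta$, whence $\inf_x|q(t,x)| > \delta$. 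To upgrade the joint infimum to a strict bound I would exploit the strict gap $M < b$: picking $b'$ with $M < b' < b$ and the corresponding $\delta' = 1 - \sqrt{\tilde{C}(\mu)\,b'} > \delta$ gives $\inf_{(t,x)\in\R^2}|q(t,x)| \ge \delta' > \delta = 1 - \sqrt{\epsilon}\sqrt{c(\mu)\tilde{C}(\mu)}$. Finally, the definition (\ref{eqn:156}) of $\epsilon_0(\mu)$ is arranged precisely so that $\sqrt{\epsilon}\sqrt{c(\mu)\tilde{C}(\mu)} < \tfrac12$, which gives $1 - \sqrt{\epsilon}\sqrt{c(\mu)\tilde{C}(\mu)} > \tfrac12$ and closes the chain in (\ref{eqn:157}).

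I do not expect a serious obstacle here; the mathematical content sits entirely in Lemma \ref{lem:8.1} and the propagation estimate (\ref{eqn:7}), both of which may be assumed. The only points demanding care are bookkeeping: checking that $\epsilon < \epsilon_0(\mu)$ simultaneously secures the hypothesis $\epsilon \in (0,1)$ of (\ref{eqn:7}), the admissibility $\delta \in [0,1]$ of the chosen $\delta$, and the final margin $\sqrt{\epsilon\,c(\mu)\tilde{C}(\mu)} < \tfrac12$; and upgrading the per-time strict bound $\inf_x|q(t,x)| > \delta$ to a strict bound for the joint infimum over $(t,x)$, which is handled by the strict energy gap $M < b$ noted above.
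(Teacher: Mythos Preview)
Your proposal is correct and follows essentially the same approach as the paper: both arguments combine the propagation estimate (\ref{eqn:7}) with the vacuum bound (\ref{eqn:26}) at the threshold $\delta = 1 - \sqrt{\epsilon}\sqrt{c(\mu)\tilde{C}(\mu)}$. The only cosmetic difference is that the paper argues by contrapositive (assuming $\inf_{(t,x)}|q(t,x)| \le \delta$ and deducing $E^\mu(q_0) \ge \epsilon$, handling the joint-infimum issue via an auxiliary $\tilde{\delta} > \delta$), whereas you argue directly and handle the same issue with your $b' < b$ trick; the content is identical.
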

\begin{proof}
    We prove the contrapositive. Suppose $\inf_{(t, x) \in \R^2} |q(t, x)| \leq \delta \coloneqq 1 - \sqrt{\epsilon} \sqrt{c(\mu) \tilde{C}(\mu)}$ and note that $\delta \in (0, 1)$. Using the definition of $E^\mu_{\tilde{\delta}}$ and (\ref{eqn:26}), this implies that for any $\tilde{\delta} > \delta$ there exists $t \in \R$ with 
    \begin{equation*}
        E^\mu(q(t)) \geq E^\mu_{\tilde{\delta}} \geq \frac{(1 - \tilde{\delta})^2}{\tilde{C}(\mu)} \,.
    \end{equation*}
    In particular
    \begin{equation*}
        \sup_{t \in \R} E^\mu(q(t)) \geq \frac{(1 - \delta)^2}{\tilde{C}(\mu)} = c(\mu) \epsilon \,,
    \end{equation*}
    so (\ref{eqn:7}) implies $E^\mu(q_0) \geq \epsilon$.
\end{proof}
As the energies $E^\mu$ still provide a lower bound for the distance of $|q|$ to zero, we can use the smallness assumption $E^\mu < \epsilon_0(\mu)$ as a substitute for $E < \frac43$.
We define for $\mu > \frac12$ the energies
\begin{equation}
    \mathcal{E}^\mu(\rho, v) = E^\mu(\mathcal{M}^{-1}(\rho, v)) \,.
\end{equation}

\subsection{Main results}

For both the Gross-Pitaevskii equation (GP) and its hydrodynamic formulation (hGP), there are three key objects in our function framework: the energy, the space and the metric. 
We summarize the definitions given in \S 1.2 in the following diagram:
\begin{equation} \label{eqn:12}
    \arraycolsep=1.4pt\def\arraystretch{1.4}
    \begin{array}{|rl|}
        \hline
        E^s(q) &= \frac12 \| \del_x q\|_{H^{s-1}}^2 + \frac12 \||q|^2 - 1\|_{H^{s-1}}^2 \\
        X^s &= \{q \in H^s_{\loc}(\R; \C): E^s(q) < \infty\} \, / \, \mathbb{S}^1 \\
        d^s(q, p) &= \left(\int_{\R} \inf_{\lambda \in \mathbb{S}^1} \|\sech(y - \cdot) (\lambda q - p)\|_{H^s}^2 \dd y \right)^{\frac12}
        \\ & \\
        \mathcal{M} \Bigg\downarrow
        \quad &
        \begin{matrix} 
            q &= \sqrt{\rho} e^{i \phi}, \, \rho = |q|^2, \, v = \del_x \phi \\
            p &= \sqrt{\eta} e^{i \psi}, \, \eta = |p|^2, \, w = \del_x \psi
        \end{matrix}
        \quad
        \Bigg\uparrow \mathcal{M}^{-1}
        \\ & \\
        \mathcal{E}^s(\rho, v) &= E^s(\mathcal{M}^{-1}(\rho, v)) \\
        \mathcal{Y}^s &= \{(\rho, v) \in (1 + H^s(\R; \R)) \times H^{s-1}(\R; \R) \} \\
        \;\, \theta^s((\rho, v), (\eta, w)) &= \|\rho - \eta\|_{H^s} + \|v - w\|_{H^{s-1}} \\
        \hline
    \end{array}
\end{equation}
Here the Madelung transform and its inverse
\begin{equation*}
    \mathcal{M}(q) = \Big(|q|^2, \Imag\Big[\frac{\del_x q}{q}\Big]\Big) \qquad \qquad
    \mathcal{M}^{-1}(\rho, v) = \big(\sqrt{\rho} e^{i \phi}\big) \, \mathbb{S}^1, \quad \del_x \phi = v
\end{equation*}
are given in (\ref{eqn:1}) and (\ref{eqn:201}) respectively.
Recall also the explicit forms of the energies $E$ and $\mathcal{E}$ in the most important case $s = 1$:
\begin{equation*}
    E(q) = \frac12 \int_{\R} |\del_x q|^2 + (|q|^2 - 1)^2 \dd x \qquad \qquad \mathcal{E}(\rho, v) = \frac12 \int_{\R} \frac{(\del_x \rho)^2}{4 \rho} + \rho v^2 + (\rho - 1)^2 \dd x \,.
\end{equation*}
Our first main result is the following theorem, which is central to our strategy as it establishes a local bilipschitz equivalence between the metrics $d^s$ and $\theta^s$. We require $s > \frac12$ to use $L^\infty$ embeddings and certain product estimates.

\begin{restatable}[Local bilipschitz equivalence of $d^s$ and $\theta^s$]{theorem}{equivalence} \label{thm:3}
    Let $s > \frac12$ and $r, \delta > 0$. Consider measurable functions $\rho, \eta, \phi, \psi: \R \longrightarrow \R$ so that $q, p \in \mathcal{S}'(\R) \cap H_{\loc}^s(\R)$ and $|q|, |p| > \delta$, where $q = \sqrt{\rho} e^{i \phi}$ and $p = \sqrt{\eta} e^{i \psi}.$ There exist constants $C_1(s, \delta, r), C_2(s, r) > 0$ so that the following hold:
    \begin{enumerate}[(i)]
        \item If $d^s(1, q), d^s(1, p) < r$, then
            \begin{equation*}
                \theta^s((\rho, \del_x \phi), (\eta, \del_x \psi)) \leq C_1(s, \delta, r) \, d^s(q, p) \,.
            \end{equation*}
        \item If $\theta^s((1,0), (\rho, \del_x \phi)), \theta^s((1,0), (\eta, \del_x \psi)) < r$, then
        \begin{equation*}
            d^s(q, p) \leq C_2(s, r) \, \theta^s((\rho, \del_x \phi), (\eta, \del_x \psi)) \,.
        \end{equation*}
    \end{enumerate}
\end{restatable}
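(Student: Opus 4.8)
The plan is to reduce both inequalities to pointwise-in-$y$ estimates for the $\sech$-localized pieces and then integrate in $y$. First I would record a localization lemma of the form $\|f\|_{H^\sigma}^2 \eqsim \int_\R \|\sech(y-\cdot)f\|_{H^\sigma}^2\dd y$ for every $\sigma$, so that the global norms in $\theta^s$ (and, for the converse direction, in $d^s$ itself) become $y$-integrals of local $H^s$- and $H^{s-1}$-norms. Together with the algebra/tame product estimates valid for $s>\tfrac12$ (which follow from $H^s(\R)\hookrightarrow L^\infty(\R)$, in the forms $H^s\cdot H^s\to H^s$ and $H^s\cdot H^{s-1}\to H^{s-1}$) and the composition estimates for $z\mapsto\sqrt z,\ 1/z,\ e^{iz}$, this is the entire toolbox. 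The weight $\sech$ never needs to be split, because in each product one factor ($q$, $p$, $\rho-1$, $\sqrt\rho+\sqrt\eta$, $1/(\lambda q\,p)$, $e^{i\psi}$) is a fixed function whose global norm factors out of the $y$-integral, while the $y$-dependent localized factor carries the weight; a constant unimodular $\lambda_y$ attached to a fixed factor leaves its norms unchanged, so all constants stay uniform in $y$.

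A preliminary step supplies the uniform bounds. From $d^s(1,q)<r$ I would extract $\|\del_x q\|_{H^{s-1}}\le C(s,r)$ (immediate from the localization lemma applied to $\del_x(\lambda-q)$) and an upper bound $\|q\|_{L^\infty}\le M(s,r)$; the latter uses that the inner infimum over $\mathbb{S}^1$ can only correct the \emph{phase} and not the \emph{modulus} of $q$, so a large value of $|q|$ forces a definite contribution to $d^s(1,q)$. Combined with the hypothesis $|q|,|p|>\delta$ this yields $\delta<|q|,|p|\le M$, hence uniform control of $1/\rho$ and $1/(\lambda q\,p)$ — this is the only place $\delta$ enters, which is why it appears in $C_1$ but not in $C_2$. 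For the converse direction, $\theta^s((1,0),(\rho,\del_x\phi))<r$ gives directly $\|\rho-1\|_{H^s}<r$, whence $\|\rho-1\|_{L^\infty}\lesssim_s r$ so that $\rho$ is bounded above and below by constants depending only on $s,r$, together with $\|v\|_{H^{s-1}}<r$.

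For (i) I would use that $\rho-\eta$ and $v-w$ are phase-invariant and choose, for a.e.\ $y$, a near-minimizing $\lambda_y\in\mathbb{S}^1$ for the inner infimum of $d^s(q,p)$ at $y$ (measurable selection), writing $g_y=\lambda_y q-p$. The identities $\rho-\eta=2\Real(\overline{\lambda_y q}\,g_y)-|g_y|^2$ and $v-w=\Imag\big[(p\,\del_x g_y-g_y\,\del_x p)/(\lambda_y q\,p)\big]$ reduce matters to $\|\sech(y-\cdot)(\rho-\eta)\|_{H^s}\lesssim\|\sech(y-\cdot)g_y\|_{H^s}$ and $\|\sech(y-\cdot)(v-w)\|_{H^{s-1}}\lesssim\|\sech(y-\cdot)g_y\|_{H^s}$, where the reciprocal $1/(\lambda_y q\,p)$ is handled by the composition estimate and the $\delta$-lower bound, and the products by the tame estimates. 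Integrating in $y$ and invoking the localization lemma turns the right-hand sides into $d^s(q,p)^2$, giving (i).

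For (ii) I would instead fix the explicit, manifestly measurable choice $\lambda_y=e^{i(\psi(y)-\phi(y))}$, aligning the phases of $q$ and $p$ at $x=y$. Factoring out the unimodular $e^{i\psi}$ (controlled with uniform constant by $\|w\|_{H^{s-1}}<r$) reduces matters to $\|\sech(y-\cdot)(\sqrt\rho\,e^{i\alpha}-\sqrt\eta)\|_{H^s}$ with $\alpha=\phi-\psi-\phi(y)+\psi(y)$, so that $\alpha(y)=0$ and $\del_x\alpha=v-w$. Splitting $\sqrt\rho\,e^{i\alpha}-\sqrt\eta=(\sqrt\rho-\sqrt\eta)e^{i\alpha}+\sqrt\eta\,(e^{i\alpha}-1)$, the first term is controlled by $\|\rho-\eta\|_{H^s}$ via $\sqrt\rho-\sqrt\eta=(\rho-\eta)/(\sqrt\rho+\sqrt\eta)$ and positivity of $\rho,\eta$. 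I expect the second term to be the \emph{main obstacle}: one must bound $\|\sech(y-\cdot)(e^{i\alpha}-1)\|_{H^s}$ by the local $H^s$-norm of the primitive $\alpha$, and then bound that primitive by $\|\sech(y-\cdot)(v-w)\|_{H^{s-1}}$ after integrating in $y$. This requires a weighted primitive (Hardy/Poincaré-type) estimate in which the normalization $\alpha(y)=0$ fixes the otherwise free additive constant and the exponential decay of $\sech$ keeps $\int_y^{\cdot}(v-w)$ effectively local; it is exactly the step where the one-dimensional gain of a derivative from $v$ to $\phi$ is used, and it is the point the paper flags as delicate for $\tfrac12<s<1$. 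Granting this estimate, summing the two terms and integrating in $y$ yields (ii).
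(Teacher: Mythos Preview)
Your plan for (i) is essentially sound and close in spirit to the paper's argument: the paper localizes via the ball decomposition $\sum_k d^s_\ast\big\vert_{B_k}$ (Lemma~\ref{lem:36}) rather than your continuous $\sech$-localization lemma, but the algebraic identities and product estimates you propose are the right ones and the argument goes through.

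For (ii), however, there is a genuine gap. You correctly isolate the obstacle --- controlling $\int_\R\|\sech(y-\cdot)(e^{i\alpha_y}-1)\|_{H^s}^2\,\dd y$ by $\|v-w\|_{H^{s-1}}^2$ --- but you do not prove the weighted Poincar\'e/Hardy inequality you invoke, and it is not a standard estimate. Two difficulties hide behind ``granting this estimate'': first, your explicit choice $\lambda_y=e^{i(\psi(y)-\phi(y))}$ need not be near-optimal for the inner infimum, so you are asking for something strictly stronger than what is needed; second, even after reducing to $\|(e^{i(\phi-\psi)})'\|_{H^{s-1}}$, bounding this by $\|\phi'-\psi'\|_{H^{s-1}}$ is itself nontrivial (the obvious multiplier argument is circular, since the multiplier norm of $e^{i(\phi-\psi)}$ involves exactly the quantity you want to bound). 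The paper avoids both issues by a different route: it keeps the infimum and uses the Koch--Liao inequality $\tilde d^s(1,q)\le C\sqrt{E^s(q)}$ (Lemma~\ref{lem:3}) as a black box, so that the phase term $(I)$ is bounded by $E^s(e^{i(\phi-\psi)})=\tfrac12\|(e^{i(\phi-\psi)})'\|_{H^{s-1}}^2$; it then closes the loop with a separate scaling argument (Lemma~\ref{lem:4}) that yields $\|(e^{i\phi})'\|_{H^{s-1}}\le C(1+\|\phi'\|_{H^{s-1}})^\gamma\|\phi'\|_{H^{s-1}}$. These two lemmas are the substantive content of (ii) and are absent from your sketch.

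A minor correction: the paper does \emph{not} flag this step as delicate for $\tfrac12<s<1$. The bilipschitz equivalence is proven uniformly for all $s>\tfrac12$; what the paper flags as problematic in that range is interpreting $v^2$ in (hGP)$_2$, which concerns Theorem~\ref{thm:2}, not Theorem~\ref{thm:3}.
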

\begin{corollary} \label{cor:1}
    Let $s \geq 1$, $\frac12 < \mu < 1$. For all $b < \frac43$ and $\epsilon < \epsilon_0(\mu)$ the maps
    \begin{equation*}
        \big(\big\{q \in X^s: E(q) < b \big\}, d^s\big) \xrightarrow{\quad \mathcal{M} \quad} \big(\big\{(\rho, v) \in \mathcal{Y}^s: \mathcal{E}(\rho,v) < b  \big\}, \theta^s\big)
    \end{equation*}
    and
    \begin{equation*}
        \big(\big\{q \in X^s: E^\mu(q) < \epsilon \big\}, d^s\big) \xrightarrow{\quad \mathcal{M} \quad} \big(\big\{(\rho, v) \in \mathcal{Y}^s: \mathcal{E}^\mu(\rho, v) < \epsilon \big\}, \theta^s\big)
    \end{equation*}
    are bilipschitz equivalences. Recall that a bilipschitz equivalence is a map which is bijective, Lipschitz continuous, and has a Lipschitz continuous inverse. Here $\epsilon_0(\mu)$ is a constant defined in (\ref{eqn:156}).
\end{corollary}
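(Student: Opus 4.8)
The plan is to read the corollary off Theorem \ref{thm:3} after checking, set by set, that $\mathcal M$ is a bijection between the two sublevel sets and that both $\mathcal M$ and its inverse are Lipschitz. The organising remark is that $\mathcal M$ preserves the energies tautologically: since $\mathcal E^s = E^s \circ \mathcal M^{-1}$ and $\mathcal M^{-1}\circ\mathcal M = \mathrm{Id}$ on $X^s$, we get $\mathcal E(\mathcal M q) = E(q)$ and $\mathcal E^\mu(\mathcal M q)=E^\mu(q)$. Consequently the constraint $E(q)<b$ (resp.\ $E^\mu(q)<\epsilon$) cutting out the source corresponds exactly to the constraint $\mathcal E(\rho,v)<b$ (resp.\ $\mathcal E^\mu(\rho,v)<\epsilon$) cutting out the target, so $\mathcal M$ carries one set bijectively onto the other with inverse $\mathcal M^{-1}$. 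The only care needed is the $\mathbb S^1$-bookkeeping: $|q|^2$ and $\del_x q/q$ are unchanged under $q\mapsto\lambda q$, and replacing the primitive $\phi$ of $v$ by another only multiplies $\sqrt\rho\,e^{i\phi}$ by a constant phase, so both maps descend to the quotient.

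First I would extract from the energy constraint a uniform lower bound away from vacuum, which is needed both to make $\mathcal M$ defined at all and to furnish the parameter $\delta$ of Theorem \ref{thm:3}. For the first map, $E(q)=E^1(q)<b<\tfrac43$ forces $\inf_x|q(x)|\ge\tilde\delta(b)>0$ by Lemma \ref{lem:8}, and on the target side $\mathcal E(\rho,v)=E(\mathcal M^{-1}(\rho,v))<b$ gives the same bound, i.e.\ $\inf_x\rho\ge\tilde\delta(b)^2$. For the second map the relevant lower bound comes from Lemma \ref{lem:8.1}: the bound (\ref{eqn:26}) applied exactly as in the proof of Corollary \ref{cor:3} turns $E^\mu(q)<\epsilon$ into $\inf_x|q(x)|>\delta(\mu,\epsilon)>0$. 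In either case $|q|,|p|>\delta$ holds with a uniform $\delta$ across the whole set.

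With $\delta$ in hand I would invoke Theorem \ref{thm:3}. Taking $p=1$ (so $\eta=1$, $\psi=0$) in part (i) shows at once that $\mathcal M q\in\mathcal Y^s$, i.e.\ that $\mathcal M$ really lands in the target space, and bounds $\theta^s((1,0),\mathcal M q)\le C_1\,d^s(1,q)$; the symmetric use of part (ii) with $(\eta,w)=(1,0)$ shows $\mathcal M^{-1}(\rho,v)\in X^s$. Applying parts (i) and (ii) to arbitrary pairs then yields the two-sided estimate
\[
\theta^s(\mathcal M q,\mathcal M p)\le C_1\,d^s(q,p),\qquad d^s(q,p)\le C_2\,\theta^s(\mathcal M q,\mathcal M p),
\]
where, to close the loop, the radius feeding part (ii) is itself produced by part (i) applied with $p=1$. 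This is the asserted bilipschitz equivalence, with constants depending only on $s,b$ (resp.\ $s,\mu,\epsilon$).

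The step I expect to be the crux is the uniform control of the reference radius $r$ in the hypotheses $d^s(1,q)<r$ of Theorem \ref{thm:3}(i). The sublevel sets are cut out by the low-regularity energy $E=E^1$ (resp.\ $E^\mu$ with $\mu<1$), and at regularity $s>1$ (resp.\ $s>\mu$) this does \emph{not} bound $d^s(1,q)\le C_0\sqrt{E^s(q)}$, since $E^s(q)$ can be arbitrarily large on the set while $E^1(q)$ stays below $b$. The way through is to observe that the energy constraint already controls the only low-order data that $\mathcal M$ is sensitive to — the vacuum bound $\delta$ and, for the first map via the one-dimensional interpolation $\||q|^2-1\|_{L^\infty}^2\lesssim \||q|^2-1\|_{L^2}\,\|\del_x(|q|^2)\|_{L^2}\lesssim\|q\|_{L^\infty}E(q)$, a uniform bound on $\|q\|_{L^\infty}$ — so that the bilipschitz constants of Theorem \ref{thm:3} may be taken uniform over the set. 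If instead one keeps the constants dependent on the $d^s$-/$\theta^s$-radius, the conclusion holds on each bounded neighbourhood, which is still enough to transport completeness, the homeomorphism property, and the well-posedness of Theorem \ref{thm:1} through $\mathcal M$. Everything else — mutual inverseness and the finiteness of the image norms — is then routine.
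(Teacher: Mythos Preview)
Your route is the paper's route: handle the $\mathbb S^1$-quotient bookkeeping, extract a uniform vacuum bound $\delta$ from the energy constraint via Lemma~\ref{lem:8}/\ref{lem:8.1}, and then feed everything into Theorem~\ref{thm:3}. The paper's proof is three sentences to exactly this effect, plus the remark that $s\ge 1$ makes $v\in L^2\subset L^1_{\loc}$ so that the primitive $\phi(x)=\int_0^x v$ is available.

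Where you go further is precisely where you locate the crux. The paper simply asserts that ``with Lemma~\ref{lem:3} we find some $r=r(s,\epsilon,b)>0$ such that $d^s(1,q)<r$'' and then invokes Theorem~\ref{thm:3}. But as you point out, Lemma~\ref{lem:3} only gives $d^s(1,q)\le C(s)\sqrt{E^s(q)}$, and for $s>1$ (resp.\ $s>\mu$) the quantity $E^s(q)$ is not bounded on the sublevel set $\{E(q)<b\}$ (resp.\ $\{E^\mu(q)<\epsilon\}$): take e.g.\ $q_n=1+n^{-1}\psi\,e^{inx}$ with $\psi\in C_c^\infty$. So the paper's claimed uniform $r$ depending only on $(s,\epsilon,b)$ is not actually delivered by the lemma it cites, and the literal statement ``bilipschitz equivalence'' with a single global Lipschitz constant does not follow from Theorem~\ref{thm:3} as written --- the constant $C_1(s,\delta,r)$ there genuinely depends on $\sup_k d^s_\ast\big\vert_{B_k}(1,q)$ through the factor $(2+d^s_\ast\vert_{B_0}(1,q)+d^s_\ast\vert_{B_0}(1,p))$ in~(\ref{eqn:15}). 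Your proposed first fix (uniform constants from only $\delta$ and an $L^\infty$ bound) would therefore require reopening the proof of Theorem~\ref{thm:3}(i), not merely its statement.

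Your fallback is the honest resolution: one obtains a \emph{local} bilipschitz equivalence (Lipschitz on $d^s$-bounded subsets), and this is exactly what the proof of Theorem~\ref{thm:2} actually uses --- continuity of the flow transfers through $\mathcal M$ once $\mathcal M$ is a local homeomorphism with locally Lipschitz inverse on each energy sublevel set. In short: your argument is the paper's argument, but you have caught a point the paper glosses over.
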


Our second main result is the global-in-time well-posedness of the hydrodynamic Gross-Pitaevskii equations.

\begin{definition}[Solution to (hGP)] \label{def:2}
    Let $s \geq 1$ and $0 \in I \subset \R$ be an open time interval or the real line, and let $(\rho_0, v_0) \in \mathcal{Y}^s$ with $\rho_0 > 0$. A solution to (hGP) with initial data $(\rho_0, v_0)$ is a pair $(\rho, v) \in C(I; \mathcal{Y}^s)$ with $\rho > 0$ which solves (hGP) in the sense of distributions and fulfills $(\rho, v)(0) = (\rho_0, v_0)$.
\end{definition}
\begin{restatable}[Global-in-time well-posedness of (hGP) for $s \geq 1$]{theorem}{gwphgp} \label{thm:2}
    Let $s \geq 1$. The hydrodynamic Gross-Pitaevskii equations (hGP) are globally-in-time well-posed in the metric space $(\mathcal{Y}^s, \theta^s)$ for initial data $(\rho_0, v_0) \in \mathcal{Y}^s$ with $\mathcal{E}(\rho_0, v_0) < \frac43$ in the following sense:
    
    There exists a solution $(\rho, v) \in C_b(\R; \mathcal{Y}^s)$ to (hGP) (see Definition \ref{def:2}). It is the unique solution that fulfills 
    \begin{equation} \label{eqn:151}
        \mathcal{E}(\rho(t), v(t)) = \mathcal{E}(\rho_0, v_0) < \frac43
    \end{equation}
    for all $t \in \R$. For any $T \geq 0$ the solution map
    \begin{align*}
        \Big\{(\rho_0, v_0) \in \mathcal{Y}^s: \mathcal{E}(\rho_0, v_0) < \frac43 \Big\} &\longrightarrow C_b([-T, T]; \mathcal{Y}^s) \\
        (\rho_0, v_0) &\longmapsto (\rho, v)
    \end{align*}
    is continuous.
    
    For all $\frac12 < \mu < 1$ there exist constants $c(\mu), \epsilon_0(\mu) > 0$, defined in (\ref{eqn:7}) and (\ref{eqn:156}), so that if we replace the assumption $\mathcal{E}(\rho_0, v_0) < \frac43$ by $\mathcal{E}^\mu(\rho_0, v_0) < \epsilon < \epsilon_0(\mu)$, then the above statement holds with (\ref{eqn:151}) replaced by $\mathcal{E}^\mu(\rho(t), v(t)) < c(\mu) \, \epsilon$.
\end{restatable}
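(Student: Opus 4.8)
The plan is to transport the well-posedness theory from (GP) to (hGP) along the bilipschitz equivalence $\mathcal{M}$ of Corollary \ref{cor:1}, feeding in Theorem \ref{thm:1} and invoking Corollary \ref{cor:2} to exclude vacuum for all time. For existence, given $(\rho_0, v_0) \in \mathcal{Y}^s$ with $\mathcal{E}(\rho_0, v_0) < \frac43$, I would set $q_0 = \mathcal{M}^{-1}(\rho_0, v_0) \in X^s$, so that $E(q_0) = \mathcal{E}(\rho_0, v_0) < \frac43$, and let $q \in C(\R; X^s)$ be the global (GP) solution from Theorem \ref{thm:1}. Since $s \geq 1$ the energy $E$ is conserved, $E(q(t)) = E(q_0)$, so fixing $b$ with $E(q_0) < b < \frac43$ confines the whole trajectory to the sublevel set $\{E < b\}$; Corollary \ref{cor:2} then gives $\inf_{t,x}|q(t,x)| > \tilde\delta(b) > 0$, so $(\rho, v)(t) := \mathcal{M}(q(t))$ is well-defined for every $t$. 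Because $\mathcal{M}$ is Lipschitz on $\{E < b\}$ by Corollary \ref{cor:1} and $t \mapsto q(t)$ is continuous into $X^s$, the curve $t \mapsto (\rho, v)(t)$ lies in $C(\R; \mathcal{Y}^s)$; the uniform-in-time bound (\ref{eqn:155}) on $\sup_t E^s(q(t))$, together with $\theta^s((1,0), \mathcal{M}(q)) \lesssim d^s(1, q) \lesssim \sqrt{E^s(q)}$, upgrades this to $C_b(\R; \mathcal{Y}^s)$. By construction $\mathcal{E}(\rho(t), v(t)) = E(q(t)) = E(q_0) = \mathcal{E}(\rho_0, v_0)$, which is the conserved-energy identity (\ref{eqn:151}).

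Next I would verify that $(\rho, v) = \mathcal{M}(q)$ solves (hGP) in the sense of distributions. The continuity equation is immediate from (GP): $\partial_t \rho = 2\Real[\bar q\, \partial_t q] = -2\,\Imag[\bar q\, \partial_{xx} q] = -2\,\partial_x \Imag[\bar q\, \partial_x q] = -2\,\partial_x(\rho v)$. For the momentum equation I would use the Bohm-potential identity $\frac12 \frac{\partial_x \rho}{\rho} = \frac{\partial_x \sqrt\rho}{\sqrt\rho}$, under which the right-hand side of (hGP)\textsubscript{2} collapses to $\partial_x(\partial_{xx}\sqrt\rho/\sqrt\rho)$, and recover the equation by differentiating $v = \Imag[\partial_x q / q]$ in time. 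Since $s \geq 1$ and vacuum is excluded, $\rho$ is bounded below and every product and quotient that appears — notably $v^2$ and $(\partial_x\rho)^2/\rho$ — is a genuine $L^1_{\loc}$ function, so the manipulations are legitimate. I expect to handle the borderline case $s = 1$ by approximating $q_0$ with smoother data in $X^s$, carrying out the computation classically, and passing to the limit using the flow-map continuity of Theorem \ref{thm:1} together with the Lipschitz bound of Corollary \ref{cor:1}.

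For uniqueness, suppose $(\rho, v) \in C(\R; \mathcal{Y}^s)$ is any solution with $\mathcal{E}(\rho(t), v(t)) < \frac43$ for all $t$. Applying $\mathcal{M}^{-1}$ pointwise in time gives a curve $\tilde q(t) \in X^s$ that is continuous by the inverse Lipschitz bound of Corollary \ref{cor:1} and that solves (GP) distributionally by reversing the computation above; Theorem \ref{thm:1} then forces $\tilde q = q$ and hence $(\rho, v) = \mathcal{M}(q)$. Continuity of the solution map is the composition $\mathcal{M} \circ (\text{GP flow}) \circ \mathcal{M}^{-1}$: on any $[-T, T]$ all nearby trajectories remain in a common sublevel set, so the bilipschitz constants of Corollary \ref{cor:1} are uniform in $t$, and composing with the flow-map continuity of Theorem \ref{thm:1} yields continuity into $C_b([-T, T]; \mathcal{Y}^s)$. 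The low-regularity variant for $\frac12 < \mu < 1$ runs along the same lines, replacing Corollary \ref{cor:2} by Corollary \ref{cor:3} to exclude vacuum, the conservation of $E$ by the growth bound (\ref{eqn:7}) which confines the trajectory to $\{E^\mu < c(\mu)\epsilon\}$, and the first bilipschitz equivalence of Corollary \ref{cor:1} by the second.

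The main obstacle, I expect, is the rigorous distributional verification of the momentum equation (hGP)\textsubscript{2} at the endpoint $s = 1$, where $v$ lies only in $L^2$ and the quantum-pressure and convection terms sit at the edge of being well-defined. The approximation scheme must be arranged so that each nonlinear term converges in the appropriate dual space, and the same care is needed in the opposite direction for the uniqueness argument.
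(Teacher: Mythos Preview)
Your existence and continuity arguments are essentially the paper's, and your plan to verify the (hGP) equations by approximation is close in spirit to the paper's mollification argument (the paper mollifies the solution $\tilde q$ itself rather than the initial data, which is slightly more direct, but either route can be made to work).

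The uniqueness argument, however, has a genuine gap. When you write ``applying $\mathcal{M}^{-1}$ pointwise in time gives a curve $\tilde q(t)$ \ldots\ that solves (GP) distributionally by reversing the computation above'', this is not correct. The second hydrodynamic equation (hGP)\textsubscript{2} is obtained from (GP) by dividing by $q$, taking the real part, and then \emph{taking an extra $x$-derivative}. Reversing this only tells you that $\Real[Q/\tilde q]$ is constant in $x$, where $Q = i\partial_t \tilde q + \partial_{xx}\tilde q - 2(|\tilde q|^2-1)\tilde q$; combined with (hGP)\textsubscript{1} you get $Q/\tilde q = g(t)$ for some real function of time only, but not $Q = 0$. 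This is exactly the loss of information in the Madelung transform: the inverse $\mathcal{M}^{-1}$ forces an arbitrary choice of phase at each time (your $\phi(t,x) = \int_0^x v(t,y)\,dy$ fixes $\phi(t,0)=0$, which has no dynamical meaning), and the resulting $\tilde q$ solves only $i\partial_t \tilde q + \partial_{xx}\tilde q - 2(|\tilde q|^2-1)\tilde q = g(t)\tilde q$. The paper handles this by showing $g_j \in C(I;\R)$, setting $p_j = e^{iG_j(t)}\tilde q_j$ with $G_j' = g_j$, and only then obtaining genuine (GP) solutions.

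There is a second, related issue: even after the phase correction, the $p_j$ so obtained are distributional solutions in $C(I;L^2_{\loc}) \cap L^\infty(I;L^\infty\cap\dot H^1)$, but you have not checked that they are solutions in the sense of Definition~\ref{def:1} (conditions (iii) and (iv)), which is what the uniqueness in Theorem~\ref{thm:1} actually covers. The paper introduces the separate Lemma~\ref{lem:71} precisely to supply uniqueness in this broader class.
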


\begin{remark} \label{rmk:3}
    In \cite[Theorem 1.2]{Mohamad} the well-posedness of (hGP) until the appearance of vacuum is shown in $(1 + H^k) \times H^{k-1}$ for $k \in \N_{\geq 1}$. Furthermore, continuity properties for the Madelung transform between $(\mathcal{Y}^k, \theta^k)$ and the space
    \begin{equation*}
        \tilde{E}^k = \{q \in L^\infty(\R; \C): 1 - |q|^2 \in L^2(\R), \del_x q \in H^{k-1}(\R)\}
    \end{equation*}
    are established.
    Specifically, a strong metric
    \begin{equation*}
        \tilde{d}^k(q, p) = \|q - p\|_{L^\infty(\R)} + \||q|^2 - |p|^2\|_{L^2(\R)} + \|\del_x q - \del_x p\|_{H^{k-1}(\R)} 
    \end{equation*}
    and a weak metric
    \begin{equation*}
        \tilde{d}_{\loc}^k(q, p) = \|q - p\|_{L^\infty([-1, 1])} + \||q|^2 - |p|^2\|_{L^2(\R)} + \|\del_x q - \del_x p\|_{H^{k-1}(\R)}\
    \end{equation*}
    are considered, and the following is shown \cite[Proposition 1.1]{Mohamad}:
    \begin{center}
        \begin{tikzpicture}
            \node (A) at (0,0) {$(\mathcal{Y}^k \cap \{\rho > 0\}, \theta^k)$};
            \node (B) at (-6,0.5) {$(\tilde{E}^k \cap \{|q| > 0\}, \tilde{d}^k)$};
            \node (C) at (-6,-0.5) {$(\tilde{E}^k \cap \{|q| > 0\}, \tilde{d}_{\loc}^k)$};
            \node (D) at (6,0.5) {$(\tilde{E}^k \cap \{|q| > 0\}, \tilde{d}^k)$};
            \node (E) at (6,-0.5) {$(\tilde{E}^k \cap \{|q| > 0\}, \tilde{d}_{\loc}^k)$};
            \draw[->] (B) -- node[above]{loc. Lipschitz} (A);
            \draw[->] (C) -- node[below]{not loc. Lipschitz} (A);
            \draw[->] (A) -- node[above]{not cont.} (D);
            \draw[->] (A) -- node[below]{cont.} (E);
        \end{tikzpicture}
    \end{center}
    There are several ways in which Theorem \ref{thm:3} improves upon \cite[Theorem 1.2]{Mohamad}. The first is that our result covers the fractional cases as well. The second is that we obtain a full local bilipschitz equivalence
    \begin{center}
        \begin{tikzpicture}
            \node (A) at (0,0) {$(\mathcal{Y}^s \cap \{\rho > 0\}, \theta^s)$};
            \node (B) at (-6,0) {$(X^s \cap \{|q| > 0\}, d^s)$};
            \node (D) at (6,0) {$(X^s \cap \{|q| > 0\}, d^s)$};
            \draw[->] (B) -- node[above]{loc. Lipschitz} (A);
            \draw[->] (A) -- node[above]{loc. Lipschitz} (D);
        \end{tikzpicture}
    \end{center}
    and do so down to $s > \frac12$, while they have to work with two different metrics on the side of (GP) for the two directions of estimates. Furthermore, they state well-posedness up to the appearance of vacuum, while we use energy bounds to ensure this absence of vacuum for all times. Lastly, we find solutions which are uniformly bounded.
\end{remark}

\begin{remark} \label{rmk:1}
    Previously, P.E. Zhidkov \cite[Theorem III.3.1]{Zhidkov2001} studied the stability of solutions in the Zhidkov space $Z^1(\R)$ (see (\ref{eqn:202})) near space-homogeneous solutions $\Phi$, such as the constant solution $ \Phi = 1$, with respect to the distance $\theta^1$. For the case $s = 1$ he derived similar estimates as above under smallness assumptions, although he did not formulate a well-posedness result. Curiously, in \cite[Cor. III.3.5]{Zhidkov2001} he proved furthermore that for any ball $B \subset \R$, if the initial $\theta^1$-distance between the perturbed and the space-homogeneous solution is small, then for all times also the distance
    \begin{equation*}
        \inf_{\lambda \in \mathbb{S}^1} \|\lambda q - \Phi\|_{W^{1,2}(B)}
    \end{equation*}
    is small. This can be interpreted as a weaker form of the estimate $d^1 \lesssim \theta^1$ we derive (see Lemma  \ref{lem:36} and Remark \ref{rmk:2} below).
\end{remark}

\begin{remark} \label{rmk:4}
    As both Theorem \ref{thm:1} and Theorem \ref{thm:3} work for all $s > \frac12$, it may be possible to extend Theorem \ref{thm:2} to the case $\frac12 < s < 1$. The problem is that for $v \in H^{s-1} \not\subseteq L^2$ the product of distributions $v^2 = v \cdot v$ is not necessarily defined. Nevertheless, it may be possible to find global distributional solutions. For example, in the paper \cite{KillipVisan} by R. Killip and M. Vișan global-in-time well-posedness of the KdV equation in $H^{-1}$ is first shown in the sense that the solution map $\R \times \mathcal{S} \longrightarrow \mathcal{S}$ extends to a continuous mapping $\R \times H^{-1} \longrightarrow H^{-1}$, and some other conditions are fulfilled. 
    In our case, it is similarly true that for any $T \geq 0$ the solution map
    \begin{equation*}
        \Big\{(\rho_0, v_0) \in \mathcal{Y}^1: \mathcal{E}^s(\rho_0, v_0) < \epsilon_0(s) \Big\} \longrightarrow C_b([-T,T]; \mathcal{Y}^1)
    \end{equation*}
    has a unique continuous extension to a map
    \begin{equation*}
        \Big\{(\rho_0, v_0) \in \mathcal{Y}^s: \mathcal{E}^s(\rho_0, v_0) < \epsilon_0(s) \Big\} \longrightarrow C_b([-T,T]; \mathcal{Y}^s) \,.
    \end{equation*}
    This extension is given by the conjugation of the corresponding solution map for (GP) at regularity $s$ with the Madelung transform.
    R. Killip and M. Vișan then furthermore show a local smoothing result, which implies that the solution map produces functions in $L^2_{\loc,t,x}$. As a result, the equation is indeed solved in the sense of distributions. We do not know if such a local smoothing result holds in our case.
\end{remark}

\textbf{Organization of the paper.}
In \S 2 we prove Theorem \ref{thm:3}, the local bilipschitz equivalence of $(X^s, d^s)$ and $(\mathcal{Y}^s, \theta^s)$. In \S 3 we prove Theorem \ref{thm:2}, the global-in-time well-posedness of the hydrodynamic Gross-Pitaevskii equations.

\textbf{Acknowledgements.} 
I would like to thank Sarah Hofbauer for her help with reviewing the literature, and Phillipe Gravejat for directing my attention to the highly relevant paper \cite{Mohamad} by H. Mohamad. I would also like to thank the anonymous reviewer for his detailed and valuable comments. I am especially thankful to my supervisor Xian Liao for proposing this problem and strategy, and for her patience during many hours of discussion.

\textbf{Statements and Declarations.} \\
\textit{Funding}: This work was funded by the Deutsche Forschungsgemeinschaft (DFG, German Research Foundation) – Project-ID 258734477 – SFB 1173. \\
\textit{Version}: This version of the article has been accepted for publication, after peer review but is not the Version of Record and does not reflect post-acceptance improvements, or any corrections. The Version of Record is available online at: https://doi.org/10.1007/s00033-023-02089-4
\section{Local bilipschitz equivalence of $(X^s, d^s)$ and $(\mathcal{Y}^s, \theta^s)$}

The goal of this section is to prove Theorem \ref{thm:3}.
In \S 2.1, we introduce the necessary notations, definitions, and basic results required for the rest of the paper. We split the proof of the two statements $(i)$ and $(ii)$ of Theorem \ref{thm:3} into \S 2.2 and \S 2.3.

\subsection{Notations and preliminaries}

We use the notations $\R_\geq = \{r \in \R: r \geq 0\}$ and $\R_+ = \{r \in \R: r > 0\}$. We write $C$ or $C(...)$ for various constants with possible dependence on other quantities. These may change from one line to the next.
We denote by $\mathcal{D}' = \mathcal{D}'(\R) = \mathcal{D}'(\R;\C)$ the space of distributions and by $\mathcal{S}' = \mathcal{S}'(\R) = \mathcal{S}'(\R; \C)$ the space of tempered distributions. In general, if for a family of function spaces, such as the $L^p$-spaces, we write just ``$L^p$'', then we mean $L^p(\R; \C)$.

We write $\widehat{f}$ for the Fourier transform
\begin{align*}
    \hat{f}(\xi) = \frac{1}{\sqrt{2 \pi}} \int_{\R} e^{- i x \xi} f(x) \dd x
\end{align*}
for a Schwartz function $f \in \mathcal{S}$ and extend the definition as usual to the tempered distributions $f \in \mathcal{S}'$. Let $s \in \R$. We define the Sobolev space
\begin{equation*}
    H^s = H^s(\R) = H^s(\R;\C) = \{f \in \mathcal{S}'(\R;\C): \|f\|_{H^s} < \infty\}
\end{equation*}
with norm
\begin{equation*}
    \|f\|_{H^s} = \| f \|_{H^s(\R)} = \big\| \langle \xi \rangle^s \widehat{f}\, \big\|_{L^2(\R)}\,.
\end{equation*}
Here $\langle \xi \rangle = \sqrt{1 + |\xi|^2}$.
We also define the quasinorm of the homogeneous Sobolev space
\begin{equation*}
    \|f\|_{\dot{H}^s} = \| f \|_{\dot{H}^s(\R)} = \big\| |\xi|^s \widehat{f}\, \big\|_{L^2(\R)}\,.
\end{equation*}

Let $p \in [1,\infty)$. For $s \geq 0$, let $\alpha \in [0, 1)$ and $m \in \Z$ so that $s = m + \alpha$. Let $B \subset \R$ be a non-empty open interval. We define the Sobolev-Slobodeckij space
\begin{equation*}
    W^{s,p}(B) = \{f \in \mathcal{D}'(B):  \|f\|_{W^{s,p}(B)} < \infty\}
\end{equation*}
with norm
\begin{equation*}
    \|f\|_{W^{s,p}(B)} = \left( \sum_{k = 0}^m \|\del^k f\|_{L^p(B)}^p + \int_B \int_B \frac{|\del^m f(x) - \del^m f(y)|^p}{|x - y|^{1 + \alpha p}} \dd x \dd y \right)^{\frac{1}{p}} \,.
\end{equation*}
We define $W_0^{s,p}(B) = \overline{\mathcal{D}(B)}^{W^{s,p}(B)}$. For $s < 0$ we define $W^{s,p'}(B) = (W_0^{-s,p}(B))^\ast$, where $\frac{1}{p} + \frac{1}{p'} = 1$. We refer the reader to the book \cite{McLean} by W. McLean for a comprehensive exposition. For the convenience of the reader, let us recall some well-known results on Sobolev spaces that may be used without mention.

\subsubsection{Fractional Sobolev spaces on $B$ and $\R$}

The only bounded domains we use are balls $B$, and on those we use the Sobolev-Slobodeckij spaces $W^{s,2}(B)$. On the whole real line $\R$ we use $H^s = H^s(\R)$.
\begin{lemma}[$W^{s,2}(B)$ and $H^s(\R)$] \label{lem:21}
    Let $s \in \R$ and $R > 0$. Let $\tilde{B} \subset B \subseteq \R$ be concentric balls of radius $\frac{R}{2}$ and $R$. Set $B_k = B + k R$ and $\tilde{B}_k = \tilde{B} + k R$ for $k \in \Z$.
    \begin{enumerate}[(i)]
        \item There exists a natural isomorphism $H^{-s} \cong (H^s)^\ast$ (see \cite[p. 76]{McLean}).
        
        \item $H^s = W^{s,2}(\R)$ and
        \begin{equation*}
            \|f\|_{W^{s,2}(B)} \leq C_1 \min\{\|F\|_{H^s}: F \big\vert_B = f\} \leq C_2 \|f\|_{W^{s,2}(B)}\,.
        \end{equation*}
        (see \cite[p. 77, (3.23) + Theorem 3.18, 3.19]{McLean}).
        
        \item For $s \geq 0$, there exists a bounded linear extension operator $E: W^{s,2}(B) \longrightarrow H^s$ with $E f \big\vert_B = f$ (see \cite[Theorem A.4]{McLean}).
        
        \item For $s \geq 0$, there exists a constant $C(s, R)$ so that
        \begin{equation*}
            \sum_{k \in \Z} \|f\|_{W^{s,2}(\tilde{B}_k)}^2 \leq \|f\|_{H^s}^2 \leq C(s, R) \sum_{k \in \Z} \|f\|_{W^{s,2}(B_k)}^2 \leq 4 C(s, R) \|f\|_{H^s}^2
        \end{equation*}
        and
        \begin{equation*}
            \|f\|_{H^{-s}}^2 \leq C(s, R) \sum_{k \in \Z} \|f\|_{W^{-s,2}(B_k)}^2.
        \end{equation*}
        
        \item If $s > \frac12$, then $\|f g\|_{H^s} \leq C(s) \|f\|_{H^s} \|g\|_{H^s}$ (see \cite[Cor. 2.87]{BCD}). By use of the extension operator, we also have $\|f g\|_{W^{s,2}(B)} \leq C(s) \|f\|_{W^{s,2}(B)} \|g\|_{W^{s,2}(B)}$.

    \end{enumerate}
\end{lemma}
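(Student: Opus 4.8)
\noindent
Parts (i), (ii), (iii) and (v) are standard, and I would simply invoke the cited references while indicating the mechanism in one line each. For (i), the Plancherel pairing $\langle f,g\rangle=\int_\R \widehat f\,\overline{\widehat g}\,\dd\xi$ together with the splitting $1=\langle\xi\rangle^{-s}\langle\xi\rangle^{s}$ and Cauchy--Schwarz identifies $H^{-s}$ with $(H^s)^\ast$ via Riesz representation in the weighted space $L^2(\langle\xi\rangle^{2s}\dd\xi)$. Part (ii) follows from the identity $\int_\R\int_\R |f(x)-f(y)|^2|x-y|^{-1-2\alpha}\dd x\,\dd y\simeq\||\xi|^\alpha\widehat f\|_{L^2}^2$ together with the restriction/extension comparison on $B$. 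Part (iii) is the extension theorem for the interval $B$, and part (v) is the algebra property of $H^s(\R)$ for $s>\tfrac12$ ($H^s\hookrightarrow L^\infty$ plus paraproducts), whose ball version I would obtain by extending $f,g$ to $\R$ with (iii), multiplying, and restricting.

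The genuine content is the localization estimate (iv). The plan is to pass to $W^{s,2}$-norms on $\R$ via $\|\cdot\|_{W^{s,2}(\R)}\simeq\|\cdot\|_{H^s}$ from (ii), absorbing all equivalence constants into $C(s,R)$, and to write $s=m+\alpha$ with $\alpha\in[0,1)$. Centering the balls at the origin gives $\tilde B_k=(kR-\tfrac R2,kR+\tfrac R2)$ and $B_k=(kR-R,kR+R)$, so the $\tilde B_k$ partition $\R$ while the $B_k$ cover $\R$ with every point lying in at most two of them. For the lower bound I would exploit the disjointness of the $\tilde B_k$: summing the pieces of the Sobolev--Slobodeckij norm gives $\sum_k\|\del^j f\|_{L^2(\tilde B_k)}^2=\|\del^j f\|_{L^2(\R)}^2$ for $0\le j\le m$, while $\sum_k\int_{\tilde B_k}\int_{\tilde B_k}|\del^m f(x)-\del^m f(y)|^2|x-y|^{-1-2\alpha}\dd x\,\dd y\le\int_\R\int_\R(\cdots)$ because the products $\tilde B_k\times\tilde B_k$ are disjoint and the integrand is nonnegative; adding yields $\sum_k\|f\|_{W^{s,2}(\tilde B_k)}^2\le\|f\|_{W^{s,2}(\R)}^2\simeq\|f\|_{H^s}^2$.

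For the upper bound the obstruction is the nonlocal Gagliardo seminorm, whose integral over $\R\times\R$ involves pairs $(x,y)$ lying in no single $B_k$; when $\alpha=0$ this term is absent and only the $L^2$-derivative parts remain, so assume $\alpha\in(0,1)$. I would split the double integral at $|x-y|=\tfrac R2$. On the near region $|x-y|<\tfrac R2$, if $x\in\tilde B_k$ then $y\in B_k$, so this part is at most $\sum_k\int_{B_k}\int_{B_k}(\cdots)$. On the far region $|x-y|\ge\tfrac R2$ I would use $|\del^m f(x)-\del^m f(y)|^2\le 2|\del^m f(x)|^2+2|\del^m f(y)|^2$ together with the convergent tail $\int_{|z|\ge R/2}|z|^{-1-2\alpha}\dd z=C(\alpha)R^{-2\alpha}$ to bound it by $C(\alpha,R)\|\del^m f\|_{L^2(\R)}^2$. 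Since the $B_k$ cover $\R$ one also has $\|\del^j f\|_{L^2(\R)}^2\le\sum_k\|\del^j f\|_{L^2(B_k)}^2$, and combining these yields $\|f\|_{W^{s,2}(\R)}^2\le C(s,R)\sum_k\|f\|_{W^{s,2}(B_k)}^2$. The closing inequality $\sum_k\|f\|_{W^{s,2}(B_k)}^2\le 4\|f\|_{H^s}^2$ is then the bounded-overlap estimate: each point lies in at most two $B_k$ and each pair $(x,y)$ in at most two products $B_k\times B_k$, so re-summing the local norms reintroduces only a bounded multiplicity.

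Finally, the negative-index bound $\|f\|_{H^{-s}}^2\le C(s,R)\sum_k\|f\|_{W^{-s,2}(B_k)}^2$ I would deduce by duality from (i): writing $\|f\|_{H^{-s}}=\sup_{\|g\|_{H^s}\le1}|\langle f,g\rangle|$ and fixing a partition of unity $\{\chi_k\}$ subordinate to $\{B_k\}$ with the $\chi_k$ translates of a single bump (hence with $k$-uniform $W^{s,2}$-multiplier norms), I expand $\langle f,g\rangle=\sum_k\langle f,\chi_k g\rangle$, estimate each term by $\|f\|_{W^{-s,2}(B_k)}\|\chi_k g\|_{W_0^{s,2}(B_k)}$, apply Cauchy--Schwarz in $k$, and control $\sum_k\|\chi_k g\|_{W^{s,2}(B_k)}^2\le C(s,R)\|g\|_{H^s}^2$ by the positive-index estimate just proved. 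I expect the main obstacle to be precisely this nonlocal seminorm in (iv): controlling the long-range pairs $(x,y)$ that no single ball contains, which the split at scale $\tfrac R2$ together with the integrability of $|x-y|^{-1-2\alpha}$ at infinity resolves, while keeping the partition-of-unity multiplier constants uniform in $k$ so that the overlap factors stay bounded in the duality argument.
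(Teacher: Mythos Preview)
Your argument is correct. Parts (i)--(iii), (v) and the first and third inequalities of (iv) match the paper essentially verbatim: the paper also cites the references for (i)--(iii), (v), uses disjointness of the $\tilde B_k$ for the lower bound, and invokes bounded overlap for the closing inequality.

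The only genuine difference is in the second inequality of (iv). You control the nonlocal Gagliardo term by splitting the double integral at scale $R/2$: the near-diagonal part falls into a single $B_k$, and the far part is absorbed into $\|\partial^m f\|_{L^2}^2$ via the integrable tail $|z|^{-1-2\alpha}$. The paper instead works directly at the level of the $H^s$ inner product: writing $f=\sum_k \eta_k f$ for a translated partition of unity with $\operatorname{supp}\eta_k\subset B_k$, it expands $\langle f,f\rangle_{H^s}=\sum_{j,k}\langle \eta_j f,\eta_k f\rangle_{H^s}$, observes that only neighbours $|j-k|\le 1$ contribute, and bounds each surviving term by $C\|f\|_{W^{s,2}(B_k)}^2$ using (ii) and the uniform multiplier bound for $\eta_0$. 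The payoff of the paper's route is that this single computation is valid for \emph{every} $s\in\R$, so the negative-index estimate drops out of the same line rather than requiring a separate duality step; your near/far split is more elementary and transparent for $s\ge 0$, but forces you to run the partition-of-unity argument anyway (in dual form) to reach $H^{-s}$. Both are standard and equally rigorous.
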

\begin{proof}
    We only have to prove $(iv)$. We start with the first inequality in the sequence.
    The case $s = 0$ is trivial, so we assume $s > 0$. Here the statement is trivial for the terms with integer regularity, and for the fractional terms we estimate
    \begin{align*}
        \sum_{k \in \Z} \int_{\tilde{B}_k} \int_{\tilde{B}_k} \frac{|\del^m f(x) - \del^m f(y)|^2}{|x - y|^{1 + 2 \alpha}} \dd x \dd y &\leq \sum_{j, k \in \Z} \int_{\tilde{B}_j} \int_{\tilde{B}_k} \frac{|\del^m f(x) - \del^m f(y)|^2}{|x - y|^{1 + 2 \alpha}} \dd x \dd y \\
        &= \int_{\R} \int_{\R} \frac{|\del^m f(x) - \del^m f(y)|^2}{|x - y|^{1 + 2 \alpha}} \dd x \dd y \,.
    \end{align*}
    The third inequality in the sequence follows trivially.
    We show the second inequality first for $s > 0$. For the terms with integer regularity the statement is again trivial, so we focus on the fractional part\footnote{The proof in the published version of this paper contains a mistake, which is corrected in this version.}.
    Here
    \begin{align*}
        \int_{\R} \int_{\R} \frac{|\del^m f(x) - \del^m f(y)|^2}{|x - y|^{1 + 2 \alpha}} \dd x \dd y
        &= \sum_{j, k \in \Z} \int_{\tilde{B}_k} \int_{\tilde{B}_j} \frac{|\del^m f(x) - \del^m f(y)|^2}{|x - y|^{1 + 2 \alpha}} \dd x \dd y
        \\ &\leq \sum_{k \in \Z} \int_{B_{k-1} \cup B_{k+1}} \int_{B_{k-1} \cup B_{k+1}} \frac{|\del^m f(x) - \del^m f(y)|^2}{|x - y|^{1 + 2 \alpha}} \dd x \dd y
        \\ &+ \sum_{k \in \Z} \int_{\tilde{B}_k} 2 (|\del^m f(x)|^2 + |\del^m f(x)|^2) \underset{|k - j| \geq 2}{\sum_{j \in \Z}} \int_{\tilde{B}_j} \frac{1}{|x - y|^{1 + 2 \alpha}} \dd x \dd y
        \\ &= (I) + (II) \,.
    \end{align*}
    Clearly
    \begin{align*}
        (II) &\leq C(s, R) \sum_{k \in \Z} \|\del^m f\|_{L^2(B_k)}^2
    \end{align*}
    and
    \begin{align*}
        (I) &\leq C(s, R) \sum_{k \in \Z} \|\del^m f\|_{W^{\alpha,2}(B_{k-1} \cup B_{k+1})}^2 \leq C(s, R) \sum_{k \in \Z} \|f\|_{W^{s,2}(B_k)}^2 \,.
    \end{align*}
    The final estimate for $(I)$ uses a partition of unity with the covering $\bigcup_{j=-2}^2 B_{k+j}$ of $B_{k-1} \cup B_{k+1}$.
    Now we show the inequality for negative regularities.
    Let $f \in H^{-s}(\R)$, $g \in H^s(\R)$ and denote by $\langle f, g \rangle$ the dual pairing.
    We decompose $g = \sum_{k \in \Z} \eta_k g$, where $\eta_k$ is a smooth partition of unity with $\supp \eta_k \subset B_k$, $\sum_{k \in \Z} \eta_k = 1$ and $\eta_k(x) = \eta_k(x + k R)$. Since $\eta_k g \in W^{s,2}_0(B_k)$ with $\|\eta_k g\|_{W^{s,2}(B_k)} \leq C(s, R, \eta_0) \|g\|_{W^{s,2}(B_k)}$, we can estimate
    \begin{align*}
        |\langle f, g \rangle| &\leq \sum_{k \in \Z} |\langle f, \eta_k g \rangle|
        \leq \sum_{k \in \Z} \|f\|_{W^{-s,2}(B_k)} \|\eta_k g\|_{W^{s,2}(B_k)}
        \leq \left( \sum_{k \in \Z} \|f\|_{W^{-s,2}(B_k)}^2 \right)^{\frac12} \left( \sum_{k \in \Z} \|\eta_k g\|_{W^{s,2}(B_k)}^2 \right)^{\frac12}
        \\ &\leq C(s, R, \eta_0) \left( \sum_{k \in \Z} \|f\|_{W^{-s,2}(B_k)}^2 \right)^{\frac12} \|g\|_{H^s(\R)} \,.
    \end{align*}
\end{proof}
\subsubsection{Estimates in $H^s$}

The following lemma states two crucial estimates. Such kinds of product estimates are well-known in the literature, see for example \cite[Proposition 2.7]{DanchinLiao}.

\begin{lemma} \label{lem:2}
    Let $s > \frac12$ and $f, g \in \mathcal{S}'$. There exists a constant $C(s)$ so that
    \begin{equation} \label{eqn:13}
        \|f g\|_{H^s} \leq C(s) \|g\|_{H^s} \big(\|f\|_{L^\infty} + \|f'\|_{H^{s-1}}\big)
    \end{equation}
    and
    \begin{equation} \label{eqn:14}
        \|f g\|_{H^{s-1}} \leq C(s) \|g\|_{H^{s-1}} \big(\|f\|_{L^\infty} + \|f'\|_{H^{s-1}}\big) \,.
    \end{equation}
\end{lemma}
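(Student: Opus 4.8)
The plan is to prove both product estimates by means of Bony's paraproduct decomposition
\[
fg = T_fg + T_gf + R(f,g),
\]
where, with the Littlewood--Paley projections $\Delta_j$ (localizing to frequencies $|\xi|\approx 2^j$) and $S_j=\sum_{k<j}\Delta_k$, one sets $T_fg=\sum_j S_{j-2}f\,\Delta_jg$ and $R(f,g)=\sum_{|j-k|\le 1}\Delta_jf\,\Delta_kg$. The ingredients I would rely on are: the embedding $H^s\hookrightarrow L^\infty$ valid for $s>\tfrac12$; the Bernstein inequalities, together with the elementary fact that for the high-frequency blocks ($j\ge 0$) one has $\|\Delta_jf\|_{L^2}\approx 2^{-j}\|\Delta_jf'\|_{L^2}$, so that $\|f\|_{\dot H^\sigma}$ restricted to high frequencies is controlled by $\|f'\|_{H^{\sigma-1}}$; and the two summation principles --- pieces $u_j$ localized to annuli $|\xi|\approx 2^j$ satisfy $\|\sum_j u_j\|_{H^\sigma}\lesssim(\sum_j 2^{2j\sigma}\|u_j\|_{L^2}^2)^{1/2}$ for every $\sigma\in\R$, whereas for pieces localized to balls $|\xi|\lesssim 2^j$ the same bound holds only when $\sigma>0$.

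For the first estimate (\ref{eqn:13}), each summand of $T_fg$ is annulus-localized and obeys $\|S_{j-2}f\,\Delta_jg\|_{L^2}\le C\|f\|_{L^\infty}\|\Delta_jg\|_{L^2}$, so the annulus principle yields $\|T_fg\|_{H^s}\lesssim\|f\|_{L^\infty}\|g\|_{H^s}$. The remainder is a sum of ball-localized pieces, and since $s>0$ the ball principle with $\|\Delta_jf\|_{L^\infty}\le C\|f\|_{L^\infty}$ gives $\|R(f,g)\|_{H^s}\lesssim\|f\|_{L^\infty}\|g\|_{H^s}$. Only $T_gf=\sum_j S_{j-2}g\,\Delta_jf$ involves $f'$: here just the high-frequency blocks $\Delta_jf$ occur, I bound the low-frequency factor by $\|S_{j-2}g\|_{L^\infty}\le C\|g\|_{L^\infty}\lesssim\|g\|_{H^s}$ using the embedding, and convert $\sum_j 2^{2js}\|\Delta_jf\|_{L^2}^2\approx\sum_j 2^{2j(s-1)}\|\Delta_jf'\|_{L^2}^2\lesssim\|f'\|_{H^{s-1}}^2$. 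Adding the three contributions proves (\ref{eqn:13}).

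The second estimate (\ref{eqn:14}) runs through the same three terms but at the lower target regularity $s-1$, and this is where the work lies. The term $T_fg$ is still immediate, since its annulus localization makes $\|T_fg\|_{H^{s-1}}\lesssim\|f\|_{L^\infty}\|g\|_{H^{s-1}}$ valid for every regularity. The genuine difficulty is that $g\in H^{s-1}$ now has negative regularity exactly when $s<1$: I can no longer place the low-frequency factor $S_{j-2}g$ in $L^\infty$, nor invoke the ball summation principle for $R(f,g)$ since $s-1$ may be $\le 0$. The compensating structure is that $f$ is here one derivative more regular than the target, so I would keep $f$ out of $L^2$ and feed its regularity in through $\|\Delta_jf\|_{L^\infty}\lesssim 2^{j/2}\|\Delta_jf\|_{L^2}=2^{j(\frac12-s)}\cdot 2^{js}\|\Delta_jf\|_{L^2}$, with $(2^{js}\|\Delta_jf\|_{L^2})_j\in\ell^2$ of size $\lesssim\|f'\|_{H^{s-1}}$. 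For $T_gf$, combining this with the (geometrically growing but still summable) bound $\|S_{j-2}g\|_{L^2}\lesssim 2^{j\max\{1-s,0\}}\|g\|_{H^{s-1}}$ leaves the frequency factor $2^{2j(\frac12-s)}$, which is summable precisely because $s>\tfrac12$.

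The main obstacle is the remainder $R(f,g)$ in (\ref{eqn:14}), through the high-high-to-low frequency cascade: a product of two blocks at frequency $2^j$ has nontrivial content all the way down to frequency $0$, and at a low output frequency $2^N$ the crude bound $\|\Delta_N(\Delta_jf\,\Delta_jg)\|_{L^2}\le\|\Delta_jf\,\Delta_jg\|_{L^2}$ is too lossy to sum when $s-1<0$. The resolution I would use is the sharp bilinear gain $\|\Delta_N(\Delta_jf\,\Delta_jg)\|_{L^2}\lesssim 2^{N/2}\|\Delta_jf\|_{L^2}\|\Delta_jg\|_{L^2}$, the factor $2^{N/2}$ being the measure of the output frequency interval coupled with a frequency-side Cauchy--Schwarz. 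Substituting the $\dot H^s$ and $H^{s-1}$ sizes of the blocks turns the summand into $2^{-j(2s-1)}$ times an $\ell^1$-summable bilinear coefficient, and $\sum_j 2^{-j(2s-1)}<\infty$ holds exactly because $2s-1>0$, i.e.\ $s>\tfrac12$. This is the sharp threshold in the lemma, and it is precisely the Sobolev admissibility condition $(s-1)+s>0$ for the remainder; I would therefore present this term either by the direct Littlewood--Paley computation just sketched or by citing the standard bilinear remainder estimate in Besov spaces.
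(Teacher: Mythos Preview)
Your approach is essentially the same as the paper's: both argue via the Bony decomposition and treat $T_fg$, $T_gf$, $R(f,g)$ separately, and your bilinear gain $\|\Delta_N(\Delta_jf\,\Delta_jg)\|_{L^2}\lesssim 2^{N/2}\|\Delta_jf\|_{L^2}\|\Delta_jg\|_{L^2}$ is exactly the Bernstein step underlying the paper's use of the embedding $H^{s-1}\hookleftarrow B^{s-1/2}_{1,1}$ followed by the ball lemma at the positive regularity $s-\tfrac12$ and H\"older $L^2\times L^2\to L^1$.

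There is, however, one small gap. In the remainder term for (\ref{eqn:14}) you invoke the claim that $(2^{js}\|\Delta_jf\|_{L^2})_j\in\ell^2$ with $\ell^2$-norm $\lesssim\|f'\|_{H^{s-1}}$. This equivalence $2^{js}\|\Delta_jf\|_{L^2}\approx 2^{j(s-1)}\|\Delta_jf'\|_{L^2}$ holds only for the annulus blocks $j\ge 0$; for $j=-1$ the quantity $\|\Delta_{-1}f\|_{L^2}$ is not controlled by $\|f\|_{L^\infty}+\|f'\|_{H^{s-1}}$ (take $f\equiv 1$). In $T_gf$ you correctly note that only high-frequency blocks of $f$ appear, but in $R(f,g)$ the diagonal does include the low block $\Delta_{-1}f$. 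The paper avoids this by first splitting $fg=g\,S_0f+g\,(1-S_0)f$: the first piece is handled directly by $\|S_0f\|_{L^\infty}\lesssim\|f\|_{L^\infty}$ together with the fact that $S_0f$ has fixed compact Fourier support, and only afterwards is the Bony decomposition applied to $(1-S_0)f$, which has no low frequencies. Your argument goes through verbatim once you insert this preliminary splitting (or, equivalently, once you treat the finitely many low-frequency terms of the remainder separately via $\|\Delta_{-1}f\|_{L^\infty}\lesssim\|f\|_{L^\infty}$).
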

\begin{proof}
    See Appendix B.
\end{proof}

In this section we often write $f'$ for the spatial derivative $\del_x f$. Recall the definitions (\ref{eqn:12}). For notational convenience, we sometimes prefer to use the variables
\begin{equation*}
     A = \sqrt{\rho} \quad \text{ and } \quad B = \sqrt{\eta}\,.
\end{equation*}
These variables are equivalent for the sake of our estimates, by which we mean specifically Lemma \ref{lem:403}. In order to prove this, we state two estimates regarding the action of a smooth function on Sobolev spaces. They are a direct consequence of some results in \cite{BCD}.

\begin{lemma}[{\cite[Theorem 2.87, Corollary 2.91]{BCD}}] \label{lem:211}
    Let $s > \frac12$ and $F \in C^\infty(\R; \R)$ with $F'(0) = F(0) = 0$. Let $u, v \in H^s(\R; \R) \cap L^\infty(\R; \R)$. We have the estimates
    \begin{equation} \label{eqn:153}
        \|F \circ u\|_{H^s} \leq C(s, F', \|u\|_{L^\infty}) \|u\|_{H^s}
    \end{equation}
    and 
    \begin{equation} \label{eqn:154}
        \|F \circ u - F \circ v\|_{H^s} \leq C(s, F'', \|u\|_{H^s}, \|v\|_{H^s}) \|u - v\|_{H^s} \,.
    \end{equation}
    An analysis of the proof in \cite{BCD} reveals that, more precisely, the constants depend on $\|F'\|_{C^{\lceil s \rceil + 1}(B_{\|u\|_{L^\infty}})}$ and $\|F''\|_{C^{\lceil s \rceil + 1}(B_{\|u\|_{L^\infty}})}$ respectively.
\end{lemma}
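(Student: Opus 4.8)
The two displayed estimates are exactly the composition results of \cite{BCD} specialized to the Bessel-potential scale $H^s = B^s_{2,2}$, so the plan is to cite them and supply the short bookkeeping needed to obtain the stated form of the constants. For (\ref{eqn:153}) I would invoke \cite[Theorem 2.87]{BCD}: since $F \in C^\infty$ with $F(0) = 0$ and $u \in H^s \cap L^\infty$ with $s > 0$, the map $u \mapsto F \circ u$ is bounded from $H^s \cap L^\infty$ into $H^s$ with
\[
    \|F \circ u\|_{H^s} \leq C\big(s, \|u\|_{L^\infty}, \|F'\|_{C^{\lceil s \rceil + 1}(B_{\|u\|_{L^\infty}})}\big)\, \|u\|_{H^s}.
\]
The reason only $F'$ (and not $F$ itself) enters the constant is precisely where $F(0) = 0$ is used: tracking the paralinearization in the proof of \cite[Theorem 2.87]{BCD} one sees the zeroth-order contribution drop out, leaving derivatives of $F$ of orders $1$ through $\lceil s \rceil + 1$, evaluated only on the range $B_{\|u\|_{L^\infty}}$ of $u$, i.e. $\|F'\|_{C^{\lceil s \rceil + 1}(B_{\|u\|_{L^\infty}})}$.

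For the difference estimate (\ref{eqn:154}) I would first rewrite, by the fundamental theorem of calculus pointwise a.e. (legitimate since $u, v \in L^\infty$ and $F \in C^\infty$),
\[
    F \circ u - F \circ v = (u - v)\, G, \qquad G := \int_0^1 F'\big(v + \tau(u - v)\big) \dd \tau.
\]
Because $s > \tfrac12$, the space $H^s$ is an algebra (Lemma \ref{lem:21}(v)), so it suffices to bound $\|u - v\|_{H^s}$ and $\|G\|_{H^s}$ separately and multiply; the first factor is already the desired right-hand side. To estimate $G$ I would apply the \emph{already established} bound (\ref{eqn:153}) to the function $F'$, which satisfies $(F')(0) = F'(0) = 0$ by hypothesis — this is exactly where the second assumption $F'(0) = 0$ is needed, since it places $F'(v + \tau(u-v))$ in $H^s$ without a non-decaying constant term. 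Pushing the $H^s$-norm inside the $\tau$-integral by Minkowski's inequality and using $\|v + \tau(u-v)\|_{L^\infty} \leq \|u\|_{L^\infty} + \|v\|_{L^\infty}$ together with $\|v + \tau(u-v)\|_{H^s} \leq \|u\|_{H^s} + \|v\|_{H^s}$ gives
\[
    \|G\|_{H^s} \leq \int_0^1 \big\|F'(v + \tau(u-v))\big\|_{H^s} \dd \tau \leq C\big(s, \|F''\|_{C^{\lceil s \rceil + 1}(B_{\|u\|_{L^\infty}})}, \|u\|_{H^s}, \|v\|_{H^s}\big),
\]
where $F''$ appears because applying (\ref{eqn:153}) to $F'$ produces its derivative $(F')' = F''$ in the constant, and the dependence on $\|u\|_{H^s}, \|v\|_{H^s}$ arises from the factor $\|v + \tau(u-v)\|_{H^s}$ (the $L^\infty$-norms being absorbed via the embedding $H^s \hookrightarrow L^\infty$ for $s > \tfrac12$). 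Combining the two factors through the algebra property yields (\ref{eqn:154}); this is also the content of \cite[Corollary 2.91]{BCD}, which one could alternatively cite directly.

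The estimates themselves are not the difficulty — the work lies almost entirely in the constants. The main obstacle I anticipate is extracting the refined dependence $\|F'\|_{C^{\lceil s \rceil + 1}(B_{\|u\|_{L^\infty}})}$ and $\|F''\|_{C^{\lceil s \rceil + 1}(B_{\|u\|_{L^\infty}})}$ from the proofs in \cite{BCD}, which are stated with a less explicit constant: this requires following their telescoping/paralinearization argument and verifying that only derivatives of $F$ up to order $\lceil s \rceil + 1$ restricted to the range of $u$ are actually used, and that $F(0) = 0$ (respectively $F'(0) = 0$) lets one pass from $F$ to $F'$ (respectively $F'$ to $F''$) at each stage. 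A minor secondary point is to check that the vector-valued integral defining $G$ is a genuine $H^s$-valued Bochner integral, so that Minkowski's inequality applies; this is routine, as $\tau \mapsto F'(v + \tau(u-v))$ is continuous into $H^s$ by (\ref{eqn:154}) applied with the endpoints.
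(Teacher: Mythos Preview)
The paper does not prove this lemma; it simply cites \cite[Theorem 2.87, Corollary 2.91]{BCD} and remarks that inspecting the proof there yields the stated refined dependence of the constants. Your proposal is therefore more detailed than the paper's own treatment, and the argument you sketch is exactly the one underlying Corollary~2.91 in \cite{BCD}: the first estimate is Theorem~2.87, and the second follows from it via the integral representation $F(u)-F(v)=(u-v)\int_0^1 F'(v+\tau(u-v))\dd\tau$, the algebra property of $H^s$ for $s>\tfrac12$, and (\ref{eqn:153}) applied to $F'$ (using $F'(0)=0$). So your approach is correct and coincides with the cited source.

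One small circularity to repair: in the last paragraph you justify the Bochner-integrability of $\tau\mapsto F'(v+\tau(u-v))$ by invoking (\ref{eqn:154}) itself. To avoid this, note instead that the map is continuous into $L^2$ (since $F'$ is Lipschitz on bounded sets and $u,v\in L^\infty$, dominated convergence applies) and uniformly bounded in $H^s$ by (\ref{eqn:153}); separability of $H^s$ and Pettis' theorem then give strong measurability, which is all that Minkowski's inequality requires.
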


\begin{lemma} \label{lem:403}
    Let $s > \frac12$ and $\rho, \eta \in \mathcal{S}'(\R; \R) \cap H^s_{\loc}(\R; \R)$ with $\rho, \eta > 0$. Define $A = \sqrt{\rho}$ and $B = \sqrt{\eta}$. We have the estimates
    \begin{equation*}
        \|\rho - \eta\|_{H^s} \leq C_1 \big(s, \|A - 1\|_{H^s}, \|B - 1\|_{H^s}\big)\, \|A - B\|_{H^s}
    \end{equation*}
    and
    \begin{equation*}
        \|A - B\|_{H^s} \leq C_2 \big(s, \|\rho - 1\|_{H^s}, \|\eta - 1\|_{H^s}\big)\, \|\rho - \eta\|_{H^s} \,.
    \end{equation*}
\end{lemma}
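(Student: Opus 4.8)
The plan is to read off both inequalities from the algebraic relations
\[
  \rho - \eta = (A-B)(A+B) \qquad\text{and}\qquad A - B = F(\rho-1) - F(\eta-1), \quad F(x) = \sqrt{1+x}-1,
\]
and then feed these into the product estimate of Lemma \ref{lem:2} and the composition estimates of Lemma \ref{lem:211}. Throughout I would use that $s > \frac12$ gives $H^s \hookrightarrow L^\infty$, so $A-1,\,B-1,\,\rho-1,\,\eta-1$ are bounded, and that — since these differences lie in $H^s$ (hence are continuous and vanish at infinity) while $\rho,\eta>0$ — both $\rho$ and $\eta$ attain strictly positive minima $m_\rho,m_\eta>0$. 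This lower bound is what the second estimate genuinely relies on.

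For the first estimate I would write $A+B = 2 + (A-1) + (B-1)$ and apply \eqref{eqn:13} with $f = A+B$ and $g = A-B$:
\[
  \|\rho-\eta\|_{H^s} = \|(A-B)(A+B)\|_{H^s} \le C(s)\,\|A-B\|_{H^s}\big(\|A+B\|_{L^\infty} + \|(A+B)'\|_{H^{s-1}}\big).
\]
Since $\|A+B\|_{L^\infty} \le 2 + \|A-1\|_{L^\infty} + \|B-1\|_{L^\infty}$ and $\|(A+B)'\|_{H^{s-1}} \le \|A-1\|_{H^s} + \|B-1\|_{H^s}$, both of which are controlled by $\|A-1\|_{H^s}$ and $\|B-1\|_{H^s}$ through the embedding, this produces the constant $C_1(s,\|A-1\|_{H^s},\|B-1\|_{H^s})$. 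No lower bound on $\rho,\eta$ enters here.

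For the second estimate I would view $A-1 = F(\rho-1)$ and $B-1 = F(\eta-1)$, so that $A-B = F(\rho-1)-F(\eta-1)$. Because $F=\sqrt{1+\cdot}-1$ is only smooth on $(-1,\infty)$, I would first replace it by some $\tilde F \in C^\infty(\R;\R)$ that agrees with $F$ on a compact interval $[m-1,M]\subset(-1,\infty)$ containing the ranges of $\rho-1$ and $\eta-1$, where $m=\min\{m_\rho,m_\eta\}$ and $M=\max\{\|\rho-1\|_{L^\infty},\|\eta-1\|_{L^\infty}\}$. To meet the hypotheses of Lemma \ref{lem:211} I would split off the linear part, $\tilde F(x)=\tfrac12 x + G(x)$ with $G(0)=G'(0)=0$, handle $\tfrac12(\rho-\eta)$ trivially, and bound the remainder by the difference estimate \eqref{eqn:154} applied to $G$:
\[
  \|A-B\|_{H^s} \le \tfrac12\|\rho-\eta\|_{H^s} + C\big(s,G'',\|\rho-1\|_{H^s},\|\eta-1\|_{H^s}\big)\,\|\rho-\eta\|_{H^s}.
\]

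The main obstacle is exactly this global-smoothness requirement. The derivatives of $\sqrt{1+\cdot}$ blow up as the argument tends to $-1$, i.e.\ as $\rho$ or $\eta$ approaches the vacuum value $0$; concretely $F''(x) = -\tfrac14(1+x)^{-3/2}$ is of size $m_\rho^{-3/2}$ at the bottom of the range. Controlling $\|\tilde F''\|_{C^{\lceil s\rceil+1}}$ on the relevant ball therefore forces the constant in the second estimate to also control the distance of $\rho,\eta$ from zero, and not merely $\|\rho-1\|_{H^s},\|\eta-1\|_{H^s}$ — indeed a triangular dip of fixed width whose depth tends to $0$ makes $\|\sqrt{\rho}-1\|_{H^1}$ diverge while $\|\rho-1\|_{H^1}$ stays bounded. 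In the regime in which Lemma \ref{lem:403} is applied the absence of vacuum supplies precisely such a uniform lower bound, so the argument closes; I would therefore keep this dependence explicit rather than suppress it.
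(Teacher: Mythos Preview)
Your argument matches the paper's: for the first inequality the paper applies Lemma~\ref{lem:211} with $F(u)=u^2$ to the pair $(A-1,B-1)$ (your product-estimate route via \eqref{eqn:13} is an equivalent one-liner), and for the second it subtracts the linear part of $\sqrt{1+u}-1$ and invokes \eqref{eqn:154} exactly as you propose. Your caveat about the hidden dependence on $\inf\rho,\inf\eta$ is well taken and in fact more careful than the paper, which picks a smooth $F$ agreeing with $\sqrt{u+1}-\tfrac12 u-1$ only ``for $x\ge 0$'' and so does not literally cover the range $\rho-1\in(-1,0)$; as you observe, any honest smooth extension makes $C_2$ feel the distance to vacuum, but this is harmless since every application of the lemma in the paper comes equipped with such a lower bound.
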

\begin{proof}
    We apply Lemma \ref{lem:211} with $F(u) = u^2$ and obtain
    \begin{align*}
        \|A^2 - B^2\|_{H^s} &\leq \|(A-1)^2 - (B-1)^2\|_{H^s} + 2 \|A - B\|_{H^s} \\
        &\leq C\big(s, \|A - 1\|_{H^s}, \|B - 1\|_{H^s}\big) \, \|A - B\|_{H^s}\,.
    \end{align*}
    Similarly with any function $F \in C^\infty(\R; \R)$ that fulfills $F(u) = \sqrt{u + 1} - \frac12 u - 1$ for $x \geq 0$, we obtain
    \begin{align*}
        \|\sqrt{\rho} - \sqrt{\eta}\|_{H^s} &\leq \left\|\left(\sqrt{(\rho-1) + 1} - \frac12 (\rho - 1)\right) - \left(\sqrt{(\eta-1) + 1} - \frac12 (\eta - 1)\right)\right\|_{H^s} \\
        & \qquad \; \,+ \frac12 \|\rho - \eta\|_{H^s} \\
        &\leq C\big(s, \|\rho - 1\|_{H^s}, \|\eta - 1\|_{H^s}\big) \, \|\rho - \eta\|_{H^s}\,.
    \end{align*}
\end{proof}
The following lemma is also a consequence of Lemma \ref{lem:211} and will be used frequently in the subsequent section.
\begin{lemma} \label{lem:212}
    Let $s, \delta, R > 0$. There exists $C(s, \delta, R) > 0$ such that for any ball $B_0 \subset \R$ of radius $R$ and all $u \in W^{s,2}(B_0)$ with $|u| > \delta > 0$ we have
    \begin{equation*}
        \|u^{-1}\|_{W^{s,2}(B_0)} \leq C(s, \delta) \|u\|_{W^{s,2}(B_0)}\,.
    \end{equation*}
\end{lemma}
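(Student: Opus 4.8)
The plan is to split the proof according to the size of $s$, because for low regularity a pointwise argument suffices, whereas for $s > 1$ one needs the composition estimate of Lemma \ref{lem:211}. Throughout, the only subtle point is that the constant must be \emph{uniform} in $u$ (in particular independent of $\|u\|_{L^\infty(B_0)}$), so I will be careful that every estimate scales correctly.

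For $0 < s \leq 1$ the bound is elementary and I would not invoke Lemma \ref{lem:211} at all (which anyway requires $s > \frac12$). On $B_0$ we have the two pointwise inequalities $|u^{-1}| = |u|^{-1} \leq \delta^{-1} \leq \delta^{-2}|u|$ and
\[
    |u(x)^{-1} - u(y)^{-1}| = \frac{|u(x) - u(y)|}{|u(x)|\,|u(y)|} \leq \delta^{-2}\,|u(x) - u(y)| \,.
\]
Since for $s \in (0,1)$ the norm $\|\cdot\|_{W^{s,2}(B_0)}$ is built only from $\|\cdot\|_{L^2(B_0)}$ and the Gagliardo seminorm, inserting these two inequalities termwise yields $\|u^{-1}\|_{W^{s,2}(B_0)} \leq \delta^{-2}\|u\|_{W^{s,2}(B_0)}$ at once, with the harmless constant $\delta^{-2}$. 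The endpoint $s = 1$ is identical once one notes $\del_x(u^{-1}) = -u^{-2}\,\del_x u$ and $|u^{-2}| \leq \delta^{-2}$. This works verbatim for complex-valued $u$.

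For $s > 1$ I would reduce to Lemma \ref{lem:211} through the extension operator. Fix a function $F \in C^\infty(\R;\R)$ with $F(z) = 1/z$ for $|z| \geq \delta$, $F \equiv 0$ near $0$ (so that $F(0) = F'(0) = 0$), and smooth interpolation on $\frac{\delta}{2} \leq |z| \leq \delta$. The decisive observation is that, since $1/z$ is homogeneous of degree $-1$, one has $\sup_{z \in \R}|F^{(k)}(z)| \leq C(k, \delta)$ for every $k$: on $\{|z| \geq \delta\}$ the $k$-th derivative is bounded by $C_k \delta^{-k-1}$, and on the compact transition region the bound is a fixed constant depending only on $\delta$. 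Now let $U = Eu \in H^s(\R)$ be the extension from Lemma \ref{lem:21}(iii), with $\|U\|_{H^s} \leq C(s,R)\|u\|_{W^{s,2}(B_0)}$ and, because $s > \frac12$, also $U \in L^\infty(\R)$. Applying Lemma \ref{lem:211} to $F$ gives $F \circ U \in H^s(\R)$ with
\[
    \|F \circ U\|_{H^s} \leq C\big(s, \|F'\|_{C^{\lceil s \rceil + 1}(B_{\|U\|_{L^\infty}})}\big)\,\|U\|_{H^s} \leq C(s,\delta)\,\|U\|_{H^s}\,,
\]
where the last inequality is exactly where the uniform derivative bounds on $F$ make the constant independent of $\|U\|_{L^\infty}$, hence independent of the size of $u$. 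Since $U\big\vert_{B_0} = u$ and $|u| > \delta$, we have $F \circ U\big\vert_{B_0} = u^{-1}$, so $F \circ U$ is an $H^s(\R)$-extension of $u^{-1}$; Lemma \ref{lem:21}(ii) then gives $\|u^{-1}\|_{W^{s,2}(B_0)} \leq C_1\|F \circ U\|_{H^s}$, and chaining the three estimates produces the desired $C(s,\delta,R)$.

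The main obstacle is precisely the uniformity just flagged: a naive approach using the algebra property of Lemma \ref{lem:21}(v) (e.g. bounding $u^{-1}$ by products of $u$ and $|u|^{-2}$) degrades the linear scaling into a power of $\|u\|_{W^{s,2}(B_0)}$, which is inadmissible; it is the boundedness of \emph{all} derivatives of the truncated reciprocal $F$ on $\R$ that rescues the linear, $\|u\|$-independent bound. The remaining issue is the complex-valued case for $s > 1$, which I would handle not by the algebra property but by the $\R^2$-valued analogue of Lemma \ref{lem:211} (also available in \cite{BCD}): writing $u = a + ib$, one composes $(a,b)$ with a smooth $\tilde F : \R^2 \to \R^2$ agreeing with $(x,y) \mapsto (x,-y)/(x^2 + y^2)$ on $\{x^2 + y^2 \geq \delta^2\}$ and vanishing to first order at the origin, whose derivatives are again uniformly bounded in terms of $\delta$ by homogeneity, so the same argument goes through with a linear bound.
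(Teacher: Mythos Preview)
Your proof is correct and, for $s>\tfrac12$, follows the same route as the paper: choose $F\in C^\infty(\R;\R)$ with $F(0)=F'(0)=0$ and $F(x)=x^{-1}$ away from $0$, extend $u$ to $\R$ via Lemma~\ref{lem:21}(iii), and invoke Lemma~\ref{lem:211}. The paper's proof is exactly this, stated in two sentences, and for the complex case it simply says to treat the real and imaginary parts of $u^{-1}$ separately (implicitly appealing to the multivariable version of the composition estimate from \cite{BCD}, which is your $\tilde F:\R^2\to\R^2$).

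Two points are worth noting. First, you handle $0<s\le 1$ by a direct pointwise bound on the Gagliardo seminorm; the paper instead invokes Lemma~\ref{lem:211} uniformly, which as stated requires $s>\tfrac12$, so your case split actually covers the range $0<s\le\tfrac12$ that the paper's argument formally misses (though the lemma is only ever applied for $s>\tfrac12$ in the paper). Second, you make explicit why the constant does not depend on $\|u\|_{L^\infty}$: because $x\mapsto 1/x$ is homogeneous of degree $-1$, all derivatives of $F$ are globally bounded by constants depending only on $\delta$, so the $\|F'\|_{C^{\lceil s\rceil+1}(B_{\|U\|_{L^\infty}})}$ in Lemma~\ref{lem:211} collapses to a $\delta$-dependent constant. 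The paper leaves this implicit. Your write-up is thus a somewhat more careful execution of the same idea.
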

\begin{proof}
    This follows by applying Lemma \ref{lem:211} with any function $F \in C^\infty(\R; \R)$ so that $F(0) = F'(0) = 0$ and $F(x) = x^{-1}$ for $|x| > \frac{\delta}{2}$, and using the existence of an extension operator from Lemma \ref{lem:21} $(iii)$. Note that Lemma \ref{lem:211} requires real-valued functions, so we apply it to the real and imaginary parts of $u^{-1}$ separately.
\end{proof}

\subsection{Proof of Theorem \ref{thm:3} (i)}

Recall the definitions (\ref{eqn:12}), in particular $q = \sqrt{\rho} e^{i \phi}$ and $p = \sqrt{\eta} e^{i \psi}$, as well as $A = \sqrt{\rho}$ and $B = \sqrt{\eta}$.
We assume $s > \frac12$, $d^s(1, q), d^s(1, p) < r$ and $|q|, |p| > \delta > 0$. We have to prove that
\begin{equation*}
    \theta^s((\rho, \del_x \phi), (\eta, \del_x \psi)) \leq C (s, \delta, r) \, d^s(q, p) \,.
\end{equation*}
We do this by showing an estimate of the form
\begin{equation} \label{eqn:21}
    \theta^s \lesssim \sum_{k \in \Z} d^s_\ast \big\vert_{B_k} \lesssim d^s\,.
\end{equation}
Let us elaborate on the quantity in the middle before we start the proof.
Given a ball $B \subset \R$, we define for convenience the following notations:
\begin{align} \label{eqn:19}
    d_\ast^s\big\vert_B(q, p) &= \inf_{\lambda \in \mathbb{S}^1} \|\lambda q - p\|_{W^{s,2}(B)} \,, \\ \label{eqn:22}
    d^s\big\vert_B(q, p) &= \left( \int_{\R} \inf_{\lambda \in \mathbb{S}^1} \|\sech(y - \cdot) (\lambda q - p)\|_{W^{s,2}(B)}^2 \dd y \right)^{\frac12} \,.
\end{align}
\begin{lemma} \label{lem:36}
    Let $s > \frac12$ and let $B_0 = \{x \in \R: |x| < R\}$ be an open ball of radius $R > 0$ with center $0$. There exists $C(s, R) > 0$ so that
    \begin{equation} \label{eqn:203}
        d_\ast^s\big\vert_{B_0}(q, p) \leq C(s, R) \, d^s\big\vert_{B_0}(q, p)
    \end{equation}
    for all $q, p \in \mathcal{S}' \cap H^s_{\loc}$. As a consequence, for families of balls $B_k = B_0 + k R$ with $k \in \Z$ we have
    \begin{equation} \label{eqn:204}
        \sum_{k \in \Z} d_\ast^s\big\vert_{B_k}(q, p)^2 \leq C(s, R) \, d^s(q, p)^2 \,.
    \end{equation}
\end{lemma}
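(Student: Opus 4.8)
The plan is to reduce the weighted, $y$-averaged quantity $d^s\big\vert_{B_0}$ to the weight-free quantity $d_\ast^s\big\vert_{B_0}$ in two moves: first an averaging argument that selects one good shift $y^\ast$ near $B_0$, and then the removal of the cutoff $\sech(y^\ast - \cdot)$ using that it is bounded below on $B_0$ together with the algebra property of $W^{s,2}(B_0)$ from Lemma \ref{lem:21}$(v)$. The genuine difficulty is that the infimum over $\lambda \in \mathbb{S}^1$ on the right sits \emph{inside} the integral over $y$, so the optimal phase may depend on $y$, whereas on the left one must commit to a single $\lambda$; the averaging step is precisely what lets us trade the $y$-dependent infimum for one admissible global phase.

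For (\ref{eqn:203}), I would set $\Omega = (-R, R)$ and note that the non-negative, measurable function $g(y) = \inf_{\lambda \in \mathbb{S}^1}\|\sech(y - \cdot)(\lambda q - p)\|_{W^{s,2}(B_0)}^2$ satisfies $\int_\Omega g \le \int_\R g = d^s\big\vert_{B_0}(q,p)^2$, so the set of $y \in \Omega$ with $g(y) \le \frac{1}{|\Omega|}\, d^s\big\vert_{B_0}(q,p)^2$ has positive measure; pick such a $y^\ast$. Since $q \in W^{s,2}(B_0)$ and $\mathbb{S}^1$ is compact, the infimum defining $g(y^\ast)$ is attained at some $\lambda^\ast$. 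Because $|y^\ast - x| < 2R$ for $x \in B_0$, the function $\cosh(y^\ast - \cdot) = \sech(y^\ast - \cdot)^{-1}$ is smooth with all derivatives bounded on $B_0$ uniformly in $y^\ast \in \Omega$, whence $\|\cosh(y^\ast - \cdot)\|_{W^{s,2}(B_0)} \le C(s, R)$. Writing $\lambda^\ast q - p = \cosh(y^\ast - \cdot)\,\big[\sech(y^\ast - \cdot)(\lambda^\ast q - p)\big]$ and invoking the algebra estimate of Lemma \ref{lem:21}$(v)$ yields
\begin{equation*}
    d_\ast^s\big\vert_{B_0}(q,p) \le \|\lambda^\ast q - p\|_{W^{s,2}(B_0)} \le C(s,R)\,\big\|\sech(y^\ast - \cdot)(\lambda^\ast q - p)\big\|_{W^{s,2}(B_0)} = C(s,R)\sqrt{g(y^\ast)} \le C(s,R)\, d^s\big\vert_{B_0}(q,p).
\end{equation*}

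For the consequence (\ref{eqn:204}), I would first observe that (\ref{eqn:203}) holds for every translate $B_k = B_0 + kR$ with the \emph{same} constant, since both sides are invariant under the simultaneous substitution $y \mapsto y + kR$, $x \mapsto x + kR$ (this replaces $(q,p)$ by their translates and leaves the $\sech$-integral over $y \in \R$ unchanged). Summing the squared inequality over $k$ and writing out $d^s\big\vert_{B_k}$ gives
\begin{equation*}
    \sum_{k \in \Z} d_\ast^s\big\vert_{B_k}(q,p)^2 \le C(s,R) \int_\R \sum_{k \in \Z} \inf_{\lambda \in \mathbb{S}^1}\big\|\sech(y-\cdot)(\lambda q - p)\big\|_{W^{s,2}(B_k)}^2 \dd y.
\end{equation*}
For fixed $y$, committing to one common phase in every summand shows $\sum_k \inf_{\lambda_k}(\cdots) \le \inf_\lambda \sum_k \|\sech(y-\cdot)(\lambda q - p)\|_{W^{s,2}(B_k)}^2$, and Lemma \ref{lem:21}$(iv)$ bounds the latter sum by $4C(s,R)\,\|\sech(y-\cdot)(\lambda q - p)\|_{H^s}^2$. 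Taking the infimum over $\lambda$ and integrating in $y$ turns the right-hand side into $C(s,R)\,d^s(q,p)^2$, which is (\ref{eqn:204}).

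I expect the averaging/decoupling step of the first inequality to be the main obstacle: one must produce a single shift $y^\ast$ whose optimal phase is simultaneously admissible for the whole ball, and this works only because $y^\ast$ can be chosen within bounded distance of $B_0$, keeping $\cosh(y^\ast - \cdot)$ under control. The hypothesis $s > \frac12$ enters exactly through the algebra property of $W^{s,2}(B_0)$ used to strip off the weight.
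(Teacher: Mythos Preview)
Your proof is correct and follows essentially the same route as the paper: strip off the weight via the algebra property of $W^{s,2}(B_0)$ using that $\cosh(y-\cdot)$ is uniformly bounded in $W^{s,2}(B_0)$ for $y\in B_0$, bound the resulting pointwise-in-$y$ quantity by the $y$-integral, and for (\ref{eqn:204}) swap $\sum_k$ and $\int_\R$, use $\sum_k\inf_{\lambda_k}\le\inf_\lambda\sum_k$, and finish with Lemma~\ref{lem:21}$(iv)$. The only cosmetic difference is that the paper keeps the infima over $y$ and $\lambda$ throughout (writing $\inf_{y\in B_0}\inf_\lambda\|\sech(y-\cdot)(\lambda q-p)\|\le |B_0|^{-1}\int_\R\cdots$ directly), whereas you explicitly select $y^\ast$ by averaging and $\lambda^\ast$ by compactness of $\mathbb{S}^1$; in particular, the ``genuine difficulty'' you flag about the $y$-dependent phase is not really present, since the algebra estimate $\|\lambda q-p\|\le C\|\sech(y-\cdot)(\lambda q-p)\|$ holds uniformly in $\lambda$ and $y\in B_0$, so one can take $\inf_\lambda$ on both sides before ever choosing a $y$.
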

\begin{proof}
    As $\{y - x: x, y \in B_0\} \subseteq \{x \in \R: |x| < 2R\}$, there exists a finite constant $C(s, R) > 0$ such that $\sup_{y \in B_0} \|\sech(y - \cdot)^{-1}\|_{W^{s,2}(B_0)}^2 \leq C(s, R)$. The first estimate follows:
    \begin{align*}
        \inf_{\lambda \in \mathbb{S}^1} \|\lambda q - p\|_{W^{s,2}(B_0)}^2 &\leq C(s, R) \inf_{y \in B_0}  \inf_{\lambda \in \mathbb{S}^1} \|\sech(y - \cdot) (\lambda q - p)\|_{W^{s,2}(B_0)}^2 \\
        &\leq C(s, R) \int_{\R} \inf_{\lambda \in \mathbb{S}^1} \|\sech(y - \cdot) (\lambda q - p)\|_{W^{s,2}(B_0)}^2 \dd y \,.
    \end{align*}
    Using this and Lemma \ref{lem:21} $(iv)$, we obtain the second estimate:
    \begin{align*}
        \sum_{k \in \Z} d_\ast^s\big\vert_{B_k}(q, p)^2 &\leq C(s, R) \sum_{k \in \Z} d^s\big\vert_{B_k}(q, p)^2 \\
        &\leq C(s, R) \int_{\R} \inf_{\lambda \in \mathbb{S}^1} \sum_{k \in \Z} \|\sech(y - \cdot) (\lambda q - p)\|_{W^{s,2}(B_k)}^2 \dd y \\
        &\leq C(s, R) \, d^s(q, p)^2 \,.
    \end{align*}
\end{proof}

\begin{proof}[Proof of Theorem \ref{thm:3} (i)]
    Let $B_0 = \{x \in \R: |x| < 1\}$ and observe that
    \begin{align*}
        \allowdisplaybreaks
        &\quad\, \||q|^2 - |p|^2\|_{W^{s,2}(B_0)} \\ \nonumber
        &= \inf_{\lambda, \nu \in \mathbb{S}^1} \||\lambda q|^2 - |\nu p|^2\|_{W^{s,2}(B_0)} \\ \nonumber
        &= \inf_{\lambda, \mu, \nu\in \mathbb{S}^1} \||\lambda q - \mu|^2 - |\nu p - \mu|^2 + 2 (\Real(\lambda \overline{\mu} q) - \Real(\overline{\mu} \nu p))\|_{W^{s,2}(B_0)} \,. \\ \nonumber
    \end{align*}
    We can estimate
    \begin{align} \label{eqn:15}
        &\quad\, \||q|^2 - |p|^2\|_{W^{s,2}(B_0)} \\ \nonumber
        &\leq \inf_{\lambda, \mu, \nu \in \mathbb{S}^1} \Big\| \Real\Big( \big((\lambda q - \mu) - (\nu p - \mu)\big) \big(\overline{(\lambda q - \mu) + (\nu p - \mu)}\big) \Big) \Big\|_{W^{s,2}(B_0)} \\ \nonumber
        & \hspace{3em} + 2 \|\Real(\lambda \overline{\mu} q - \overline{\mu} \nu p)\|_{W^{s,2}(B_0)} \\ \nonumber
        &\leq C(s) \inf_{\lambda, \nu \in \mathbb{S}^1} \|\lambda q - \nu p\|_{W^{s,2}(B_0)} \inf_{\mu \in \mathbb{S}^1} \big( \|\lambda q - \mu\|_{W^{s,2}(B_0)} + \|\nu p - \mu\|_{W^{s,2}(B_0)} + 2\big) \\ \nonumber
        &= C(s) \inf_{\lambda \in \mathbb{S}^1} \|\lambda q - p\|_{W^{s,2}(B_0)} \inf_{\mu \in \mathbb{S}^1} \inf_{\nu \in \mathbb{S}^1} \big( \|\lambda q - \mu\|_{W^{s,2}(B_0)} + \|\nu p - \mu\|_{W^{s,2}(B_0)} + 2\big) \\ \nonumber
        &\leq C(s) \, d^s_{\ast} \big\vert_{B_0}(q, p) \big( 2 +  d^s_{\ast}\big\vert_{B_0}(1, q) + d^s_{\ast}\big\vert_{B_0}(1, p) \big) \\ \nonumber
        &\leq C(s, r) \, d^s_{\ast} \big\vert_{B_0} (q, p),
    \end{align}
    where in the last line we used (\ref{eqn:203}).
    Now we set $B_k = B_0 + k$ and see with Lemma \ref{lem:21} $(iv)$ and (\ref{eqn:204}) that
    \begin{align*}
        \|\rho - \eta\|_{H^s}^2 &\leq C(s) \sum_{k \in \Z} \||q|^2 - |p|^2\|_{W^{s,2}(B_k)}^2 \\
        &\leq C(s, r) \sum_{k \in \Z} d_{\ast}^s\big\vert_{B_k}(q, p)^2 \\
        &\leq C(s, r) \, d^s(q, p)^2.
    \end{align*}
    It remains to estimate $\|\phi' - \psi'\|_{H^{s-1}}$. Applying Lemma \ref{lem:2} yields
    \begin{align*}
        \|(\phi - \psi)'\|_{H^{s-1}}^2 &= \|(e^{i (\phi - \psi)})' e^{-i (\phi - \psi)}\|_{H^{s-1}}^2 \\
        &\leq C(s) \|(e^{i (\phi - \psi)})'\|_{H^{s-1}}^2 \big( \|e^{- i (\phi - \psi)}\|_{L^\infty}^2 + \|(e^{-i (\phi - \psi)})'\|_{H^{s-1}}^2 \big) \\
        &\leq C(s) \|(e^{i (\phi - \psi)})'\|_{H^{s-1}}^2 \big(1 + \|(e^{i (\phi - \psi)})'\|_{H^{s-1}}^2 \big) \,.
    \end{align*}
    It therefore suffices to derive the estimate for the quantity $\|(e^{i (\phi - \psi)})'\|_{H^{s-1}}^2$. Observe with Lemma \ref{lem:21} $(iv)$ that
    \begin{align*}
        \|(e^{i (\phi - \psi)})'\|_{H^{s-1}}^2 &\leq C(s) \sum_{k \in \Z} \inf_{\lambda \in \mathbb{S}^1} \|(e^{i (\phi - \psi)} - \lambda)'\|_{W^{s-1,2}(B_k)}^2 \\
        &= C(s) \sum_{k \in \Z} \inf_{\theta \in \R} \|e^{i (\phi - \psi - \theta)} - 1\|_{W^{s,2}(B_k)}^2 \\
        &= C(s) \sum_{k \in \Z} \inf_{\theta \in \R} \|e^{-i \psi} (e^{i (\phi + \theta)} - e^{i \psi})\|_{W^{s,2}(B_k)}^2 \,.
    \end{align*}
    We now carefully introduce the amplitudes:
    \begin{align*}
        &\quad\, \inf_{\theta \in \R} \|e^{-i \psi} (e^{i (\phi + \theta)} - e^{i \psi})\|_{W^{s,2}(B_k)}^2 \\
        &= \inf_{\theta \in \R} \left\| B e^{- i \psi(x)} B^{-1} \left( \frac{ A e^{i (\phi(x) + \theta)} - B e^{i \psi(x)}}{A} + B e^{i \psi(x)} \left(\frac{1}{A} - \frac{1}{B}\right) \right) \right\|_{W^{2,s}(B_k)}^2 \\
        &\leq C(s)\|B e^{-i \psi}\|_{W^{s,2}(B_k)}^2 \|B^{-1}\|_{W^{s,2}(B_k)}^2 \\
        &\times \left( \inf_{\theta \in \R} \left\|\frac{A e^{i (\phi(x) + \theta)} - B e^{i \psi(x)}}{A}\right\|_{W^{s,2}(B_k)}^2 + \left\|B e^{i \psi(x)} \left(\frac{1}{A} - \frac{1}{B}\right) \right\|_{W^{s,2}(B_k)}^2\right) \,.
    \end{align*}
    As $A, B > \delta > 0$, we can apply Lemma \ref{lem:212}. Together with (\ref{eqn:15}) we obtain
    \begin{equation*}
        \|A^{-1}\|_{W^{s,2}(B_k)}^2 \leq C(s, \delta) \|A\|_{W^{s,2}(B_k)}^2 \leq C(s, \delta, r) \big( \inf_{\lambda \in \mathbb{S}^1} \|A - \lambda\|_{W^{s,2}(B_k)}^2 + |B_k| \big) \leq C(s, \delta, r)\,,
    \end{equation*}
    and similarly $\|B^{\pm 1}\|_{W^{s,2}(B_k)}, \|q^{\pm 1}\|_{W^{s,2}(B_k)}, \|p^{\pm 1}\|_{W^{s,2}(B_k)} \leq C(s, \delta, r)$.
    We conclude again by reducing the situation to an application of Lemma \ref{lem:36} and the previously shown estimate (\ref{eqn:15}):
    \begin{align*}
        \allowdisplaybreaks
        \|(e^{i (\phi - \psi)})'\|_{H^{s-1}}^2 &\leq C(s) \sum_{k \in \Z} \|p\|_{W^{s,2}(B_k)}^2 \|B^{-1}\|_{W^{s,2}(B_k)}^2 \\
        &\times \Big(\inf_{\lambda \in \mathbb{S}^1} \|\lambda q - p\|_{W^{s,2}(B_k)}^2 \|A^{-1}\|_{W^{s,2}(B_k)}^2 \\
        & \qquad + \|A - B\|_{W^{s,2}(B_k)}^2 \|p\|_{W^{s,2}(B_k)}^2 \|A^{-1}\|_{W^{s,2}(B_k)}^2 \|B^{-1}\|_{W^{s,2}(B_k)}^2 \Big) \\
        &\leq C(s, \delta, r) \sum_{k \in \Z} d^s_\ast\big\vert_{B_k}(q, p)^2 \\
        &\leq C(s, \delta, r)\, d^s(q, p)^2\,.
    \end{align*}
\end{proof}

\subsection{Proof of Theorem \ref{thm:3} (ii)}

We assume 
\begin{equation*}
    \theta^s((1,0), (\rho, \del_x \phi)), \theta^s((1,0), (\eta, \del_x \psi)) < r\,,
\end{equation*}
and $\sqrt{\rho}, \sqrt{\eta} > \delta > 0$. We have to prove that
\begin{equation*}
    d^s(q, p) \leq C(s, r) \, \theta^s((\rho, \del_x \phi), (\eta, \del_x \psi)) \,.
\end{equation*}
As mentioned above, due to Lemma \ref{lem:403} it suffices to prove this with $\rho, \eta$ replaced by $A = \sqrt{\rho}$ and $B = \sqrt{\eta}$.
Recall the definitions (\ref{eqn:12}). We define
\begin{equation*}
    \tilde{d}^s(q, p) = \left(\int_{\R} \inf_{\lambda \in \mathbb{S}^1} \|\sqrt{\sech(y - \cdot)} (\lambda q - p)\|_{H^s}^2 \dd y \right)^{\frac12},
\end{equation*}
where we have replaced the $\sech$ in the definition of $d^s$ with $\sqrt{\sech}$. Some of the hard work for this direction has already been done in the proof of the following Lemma \ref{lem:3}. This was proven for $d^s$ in \cite[Lemma 6.1]{KochLiao}, but the proof is identical for $\tilde{d}^s$ as $\sqrt{\sech}$ is positive and still has sufficiently fast decay.

\begin{lemma}[{\cite[Lemma 6.1]{KochLiao}}] \label{lem:3}
    For all $s \geq 0$ the energy $E^s: X^s \longrightarrow \R_\geq$ is continuous with respect to $d^s$, and there exists $C(s) > 0$ so that
    \begin{equation*}
        d^s(1, q) \leq C(s) \sqrt{E^s(q)} \quad \text{ and } \quad \tilde{d}^s(1, q) \leq C(s) \sqrt{E^s(q)}
    \end{equation*}
    for all $q \in X^s$.
\end{lemma}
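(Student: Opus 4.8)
The plan is to treat the three assertions of the lemma separately — continuity of $E^s$ with respect to $d^s$, the bound $d^s(1,q) \le C(s)\sqrt{E^s(q)}$, and the bound $\tilde{d}^s(1,q) \le C(s)\sqrt{E^s(q)}$ — observing that the first two are exactly the content of \cite[Lemma 6.1]{KochLiao}, and are already recorded in Theorem \ref{thm:1}, so that the only genuinely new work is the bound for $\tilde{d}^s$. Crucially, one cannot simply compare $\tilde{d}^s$ to $d^s$: since $0 < \sech \le 1$ we have $\sqrt{\sech} \ge \sech$ pointwise, so already at $s = 0$ one gets $\tilde{d}^s(1,q) \ge d^s(1,q)$, and for $s > 0$ the weighted $H^s$-norm is not monotone in the weight. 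The $\tilde{d}^s$ estimate must therefore be re-derived from scratch rather than deduced from the $d^s$-version.

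First I would reconstruct the structure of the Koch--Liao argument, which produces an upper bound on $\inf_{\lambda \in \mathbb{S}^1}\|\sech(y - \cdot)(\lambda - q)\|_{H^s}^2$ that is pointwise in $y$ and is then integrated against $\dd y$ to recover $d^s(1,q)^2 \le C(s)\, E^s(q)$. The point of this reconstruction is to isolate exactly which features of the weight $w = \sech$ the argument relies on. I expect these to be purely qualitative: (a) $w > 0$ and $w \in C^\infty$, so that local phase selection and multiplication or division by $w$ are well-behaved in $H^s$; (b) $w$ together with all of its derivatives decays rapidly, so that the weighted product and commutator estimates close uniformly in $y$; and (c) the $y$-integrals of the form $\int_{\R} w(y-x)^2\,\dd y$, together with their derivative-weighted analogues, are finite, which is precisely what converts the pointwise bound into the global one after Fubini.

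Then I would verify that $\sqrt{\sech}$ enjoys all of these properties: it is positive and smooth because $\sech > 0$ everywhere, and it satisfies $\sqrt{\sech(x)} \sim \sqrt{2}\,e^{-|x|/2}$ as $|x| \to \infty$, with every derivative decaying exponentially as well, so it is a positive Schwartz function. In particular every weighted integral appearing in the argument remains finite; for instance $\int_{\R} \big(\sqrt{\sech(y-x)}\,\big)^2\,\dd y = \int_{\R} \sech(u)\,\dd u = \pi < \infty$. Substituting $w = \sqrt{\sech}$ into the reconstructed estimate therefore reproduces, line by line, the bound $\tilde{d}^s(1,q) \le C(s)\sqrt{E^s(q)}$.

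The main obstacle is this second step: confirming that the Koch--Liao estimates are genuinely weight-agnostic, i.e.\ that no quantitative feature peculiar to $\sech$ — its exact $L^2$-normalization, or the precise decay rate $e^{-|x|}$ — is ever used in an essential way. The only delicate point is the slower decay $e^{-|x|/2}$ of $\sqrt{\sech}$; but since the argument requires only the finiteness of weighted moments and the boundedness of derivatives, both of which are guaranteed by exponential decay at \emph{any} positive rate, this slower decay is still amply sufficient and the transfer goes through.
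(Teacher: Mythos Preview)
Your proposal is correct and matches the paper's own treatment: the paper does not give an independent proof but simply cites \cite[Lemma 6.1]{KochLiao} for the continuity of $E^s$ and the $d^s$ bound, and remarks in one sentence that ``the proof is identical for $\tilde{d}^s$ as $\sqrt{\sech}$ is positive and still has sufficiently fast decay.'' Your write-up is more explicit about why the argument transfers (isolating the weight-agnostic features and checking them for $\sqrt{\sech}$), but the underlying idea is the same.
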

\begin{remark}
    The appearance of the square root is explained by a clash of notation: the energies $E^s$ as defined in \cite{KochLiao} correspond to $\sqrt{2 E^s}$ in our notation. 
\end{remark}

We first prove two Lemmas.

\begin{lemma} \label{lem:4}
    Let $s > \frac12$. There exists a constant $C(s) > 0$ so that for all $\phi \in \mathcal{S}' \cap H^s_{\loc}$ we have
    \begin{equation*}
        \|(e^{i \phi})'\|_{H^{s-1}} \leq C(s) (1 + \|\phi'\|_{H^{s-1}})^{\gamma} \|\phi'\|_{H^{s-1}} \,,
    \end{equation*}
    where $\gamma = 2s - 2$ if $s \geq 1$ and $\gamma = \frac{1 - s}{s - \frac12}$ if $s < 1$.
\end{lemma}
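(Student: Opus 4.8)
The starting point is the identity $(e^{i\phi})' = i\phi' e^{i\phi}$ together with $|e^{i\phi}| \equiv 1$, which already disposes of the base case $s=1$: there $\gamma = 0$ and $\|(e^{i\phi})'\|_{L^2} = \|\phi'\|_{L^2}$ exactly. For $s \neq 1$ the task is to control the fractional Sobolev correction, and I would split into the regimes $s \ge 1$ and $\frac12 < s < 1$, since $H^{s-1}$ consists of functions in the first case and of distributions in the second.

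For $s \ge 1$ I would apply the product estimate (\ref{eqn:14}) of Lemma \ref{lem:2} with $g = \phi'$ and $f = e^{i\phi}$. Using $\|e^{i\phi}\|_{L^\infty} = 1$ and $\|f'\|_{H^{s-1}} = \|(e^{i\phi})'\|_{H^{s-1}}$, this yields the self-referential bound
\[
\|(e^{i\phi})'\|_{H^{s-1}} \le C(s)\,\|\phi'\|_{H^{s-1}}\big(1 + \|(e^{i\phi})'\|_{H^{s-1}}\big).
\]
Writing $X = \|(e^{i\phi})'\|_{H^{s-1}}$ and $M = \|\phi'\|_{H^{s-1}}$, this closes whenever $C(s)M \le \tfrac12$, giving $X \le 2C(s)M$ and thus the claim for small $M$. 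To remove the smallness restriction and extract the factor $(1+M)^{2s-2}$ I would split $\phi$ at a dyadic frequency $N$, $\phi = \phi_{\le N} + \phi_{>N}$, and use $e^{i\phi} = e^{i\phi_{\le N}}e^{i\phi_{>N}}$ with the Leibniz rule $(e^{i\phi})' = (e^{i\phi_{\le N}})'\,e^{i\phi_{>N}} + e^{i\phi_{\le N}}(e^{i\phi_{>N}})'$. For $N$ large enough (a power of $M$) the low-order norms of $\phi_{>N}$ are small, so the self-referential bound closes for the high-frequency factor and it contributes essentially linearly in $M$; the smooth factor $(e^{i\phi_{\le N}})'$ is handled by Bernstein's inequality, which turns each derivative of $e^{i\phi_{\le N}}$ into a power of $N$ times $M$. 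Optimizing $N$ against $M$ balances the two contributions and produces the exponent $\gamma = 2s-2$. (As a consistency check, for integer $s$ one can instead differentiate directly, e.g.\ $(e^{i\phi})'' = (i\phi'' - (\phi')^2)e^{i\phi}$, and estimate each resulting monomial in $\phi',\dots,\phi^{(k)}$ by the one-dimensional embedding $H^{s-1}\hookrightarrow L^4$ and Gagliardo–Nirenberg, which gives the even stronger $\gamma = s-1$.)

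For $\frac12 < s < 1$ one has $s-1 \in (-\tfrac12,0)$, so I would argue by duality using $H^{s-1} = (H^{1-s})^\ast$ from Lemma \ref{lem:21}(i):
\[
\|\phi' e^{i\phi}\|_{H^{s-1}} = \sup_{\|h\|_{H^{1-s}}\le 1} \big|\langle \phi',\, e^{i\phi} h\rangle\big| \le \|\phi'\|_{H^{s-1}}\, \sup_{\|h\|_{H^{1-s}}\le 1}\|e^{i\phi} h\|_{H^{1-s}}.
\]
The task reduces to the multiplier bound $\|e^{i\phi}h\|_{H^{1-s}} \le C(1+M)^{\gamma}\|h\|_{H^{1-s}}$ on the low-regularity space $H^{1-s}$, $1-s\in(0,\tfrac12)$. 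Since $e^{i\phi}$ enjoys no $L^p$ decay, it may only enter through $L^\infty$ and homogeneous norms, which forces the fractional Leibniz splitting
\[
\|e^{i\phi} h\|_{H^{1-s}} \lesssim \|e^{i\phi}\|_{L^\infty}\|h\|_{H^{1-s}} + \|h\|_{L^p}\,\big\||D|^{1-s} e^{i\phi}\big\|_{L^q},\qquad \tfrac1p+\tfrac1q=\tfrac12,
\]
where $\|h\|_{L^p}\lesssim\|h\|_{H^{1-s}}$ requires $p \le (s-\tfrac12)^{-1}$. Estimating the homogeneous factor $\||D|^{1-s}e^{i\phi}\|_{L^q}$ back in terms of $\|\phi'\|$ and iterating this splitting — each step gaining regularity of order $s-\tfrac12$ and costing a factor $(1+M)$, so that covering the gap $1-s$ takes $\sim \frac{1-s}{s-\frac12}$ steps — produces exactly the exponent $\gamma = \frac{1-s}{s-\frac12}$ and explains its blow-up as $s\downarrow\tfrac12$.

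The main obstacle, common to both regimes, is that the quantity being bounded, $\|(e^{i\phi})'\|_{H^{s-1}}$, reappears on the right-hand side of the natural product estimate, so a single application is circular and only closes under a smallness hypothesis. Breaking this self-reference is what generates the polynomial growth: for $s\ge 1$ by a frequency cutoff optimized against $M$, and for $s<1$ by an iteration whose length is dictated by the regularity gap $1-s$ relative to the step size $s-\tfrac12$. A secondary but essential point is that $e^{i\phi}$ never decays, so in the low-regularity case it can be inserted only via $L^\infty$ and homogeneous Sobolev norms; this constraint is precisely what pins down the fractional Leibniz decomposition used and, with it, the value of $\gamma$.
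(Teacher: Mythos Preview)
Your starting point --- the self-referential bound
\[
\|(e^{i\phi})'\|_{H^{s-1}} \le C(s)\,\|\phi'\|_{H^{s-1}}\big(1 + \|(e^{i\phi})'\|_{H^{s-1}}\big)
\]
from Lemma~\ref{lem:2}, which closes only for small $\|\phi'\|_{H^{s-1}}$ --- is exactly what the paper uses. But the paper's way of breaking the circularity is much simpler than either of your schemes: it \emph{rescales}. Setting $\phi_\epsilon(x)=\phi(\epsilon x)$ and using the scaling bounds $\epsilon^{s_{\max}}\|f'\|_{H^{s-1}}\le\|(f_\epsilon)'\|_{H^{s-1}}\le\epsilon^{s_{\min}}\|f'\|_{H^{s-1}}$ with $\{s_{\min},s_{\max}\}=\{s-\tfrac12,\tfrac12\}$, one chooses $\epsilon=(1+2C(s)\|\phi'\|_{H^{s-1}})^{-1/s_{\min}}$ so that $\|(\phi_\epsilon)'\|_{H^{s-1}}\le\tfrac{1}{2C(s)}$, closes the bound for $\phi_\epsilon$, and scales back. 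The exponent $\gamma=(s_{\max}-s_{\min})/s_{\min}=|s-1|/s_{\min}$ drops out immediately, covering both regimes $s\gtrless 1$ in one stroke.

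Your two proposed mechanisms are more elaborate and each has a genuine soft spot. For $s\ge 1$, you invoke Bernstein on $e^{i\phi_{\le N}}$, but $e^{i\phi_{\le N}}$ is \emph{not} band-limited even though $\phi_{\le N}$ is; the exponential destroys Fourier support. One can still control derivatives of $e^{i\phi_{\le N}}$ via the chain rule and Bernstein applied to $\phi_{\le N}'$, but then you must also estimate $e^{i\phi_{>N}}$ in $H^{s-1}$-type norms (not just its derivative), and it is not clear your sketch yields precisely $\gamma=2s-2$ after optimization. For $\tfrac12<s<1$, the duality reduction to a multiplier bound on $H^{1-s}$ is sound, but the ``iteration gaining $s-\tfrac12$ per step'' is asserted rather than exhibited: you never say what quantity is being bootstrapped or why each step costs exactly one factor of $(1+M)$. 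The paper's scaling argument sidesteps all of this and makes the appearance of $s-\tfrac12$ (as the worst homogeneity of the $H^{s-1}$ norm) completely transparent.
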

\begin{proof}
    We assume $\|\phi'\|_{H^{s-1}} \neq 0$. By Lemma \ref{lem:2} there exists a constant $C(s)$ so that
    \begin{equation} \label{eqn:23}
        \|\phi' e^{i \phi}\|_{H^{s-1}} \leq C(s) \|\phi'\|_{H^{s-1}} \big( \|e^{i \phi}\|_{L^\infty} + \|(e^{i \phi})'\|_{H^{s-1}} \big) \,.
    \end{equation}
    For $\epsilon \in (0, 1)$ and $f \in H^s_{\loc}$ define $f_\epsilon(x) = f(\epsilon x)$. This has the scaling estimates
    \begin{equation} \label{eqn:24}
        \min\{\epsilon^{s - \frac12}, \epsilon^{\frac12}\} \|f'\|_{H^{s-1}} \leq \|(f_\epsilon)'\|_{H^{s-1}} \leq \max\{\epsilon^{s - \frac12}, \epsilon^{\frac12}\} \|f'\|_{H^{s-1}} \,.
    \end{equation}
    Define $s_{\min} \leq s_{\max}$ so that $\{s_{\min}, s_{\max}\} = \{s - \frac12, \frac12\}$. Then we can rewrite the above as
    \begin{equation} \label{eqn:25}
        \epsilon^{s_{\max}} \|f'\|_{H^{s-1}} \leq \|(f_\epsilon)'\|_{H^{s-1}} \leq \epsilon^{s_{\min}} \|f'\|_{H^{s-1}} \,.
    \end{equation}
    We choose $\epsilon = (1 + 2 C(s) \|\phi'\|_{H^{s-1}})^{- \frac{1}{s_{\min}}}$ so that
    \begin{equation*}
        \|(\phi_\epsilon)'\|_{H^{s-1}} \leq \epsilon^{s_{\min}} \|\phi'\|_{H^{s-1}} = \frac{\|\phi'\|_{H^{s-1}}}{1 + 2 C(s) \|\phi'\|_{H^{s-1}}} \leq \frac{1}{2 C(s)} \,.
    \end{equation*}
    Combining this with (\ref{eqn:23}) yields
    \begin{equation*}
        \|(e^{i \phi_\epsilon})'\|_{H^{s-1}} \leq C(s) \|(\phi_\epsilon)'\|_{H^{s-1}} + \frac12 \|(e^{i \phi_\epsilon})'\|_{H^{s-1}} \,,
    \end{equation*}
    so we obtain
    \begin{equation*}
        \|(e^{i \phi_\epsilon})'\|_{H^{s-1}} \leq 2 C(s) \|(\phi_\epsilon)'\|_{H^{s-1}} \,.
    \end{equation*}
    We conclude with the scaling estimates (\ref{eqn:24}) that 
    \begin{equation*}
        \|(e^{i \phi})'\|_{H^{s-1}} \leq \epsilon^{- s_{\max}} \|(e^{i \phi_\epsilon})'\|_{H^{s-1}} 
        \leq 2 C(s) \epsilon^{- s_{\max}} \|(\phi_\epsilon)'\|_{H^{s-1}}
        \leq 2 C(s) \epsilon^{s_{\min} - s_{\max}} \|\phi'\|_{H^{s-1}} \,.
    \end{equation*}
    Lastly, note that
    \begin{equation*}
        \epsilon^{s_{\min} - s_{\max}} = \big(1 + 2 C(s) \|\phi'\|_{H^{s-1}}\big)^{\frac{|s - 1|}{s_{\min}}} = \big(1 + 2 C(s) \|\phi'\|_{H^{s-1}}\big)^{\gamma} \,.
    \end{equation*}
\end{proof}

\begin{lemma} \label{lem:5}
    Let $s > \frac12$ and $r > 0$. There exists $C(s, r) > 0$ so that for all $q = A e^{i \phi} \in \mathcal{S}' \cap H^s_{\loc}$ with $\theta^s((1,0), (A,\phi')) < r$ we have
    \begin{equation*}
        E^s(q) \leq C(s, r) \, \theta^s((1, 0), (A, \phi'))^2 \,.
    \end{equation*}
\end{lemma}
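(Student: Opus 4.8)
The plan is to estimate the two pieces of $E^s(q) = \frac12\|q'\|_{H^{s-1}}^2 + \frac12\||q|^2-1\|_{H^{s-1}}^2$ separately and directly in terms of $\theta \coloneqq \theta^s((1,0),(A,\phi')) = \|A-1\|_{H^s} + \|\phi'\|_{H^{s-1}}$, treating $A-1 \in H^s$ and $\phi' \in H^{s-1}$ as the two basic small quantities. Throughout I would use that the hypothesis $\theta < r$ bounds each of these norms by $r$, and that since $s > \frac12$ the embedding $H^s \hookrightarrow L^\infty$ gives $\|A-1\|_{L^\infty} \leq C(s)\|A-1\|_{H^s} < C(s)r$. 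Every constant that arises can then be absorbed into a single $C(s,r)$.

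For the potential term, since $A = \sqrt{\rho}$ is real and nonnegative we have $|q|^2 = A^2$, so I would write $|q|^2 - 1 = A^2 - 1 = (A-1)^2 + 2(A-1)$. The linear piece is bounded crudely by $2\|A-1\|_{H^{s-1}} \leq 2\|A-1\|_{H^s} \leq 2\theta$. The quadratic piece I would control with the product estimate (\ref{eqn:14}) of Lemma \ref{lem:2} applied to $f = g = A-1$: because $\|A-1\|_{L^\infty}$ and $\|(A-1)'\|_{H^{s-1}}$ are both $\leq C(s)r$, this yields $\|(A-1)^2\|_{H^{s-1}} \leq C(s,r)\|A-1\|_{H^s}$. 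Hence $\||q|^2-1\|_{H^{s-1}} \leq C(s,r)\,\theta$.

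For the kinetic term I would expand $q' = A'e^{i\phi} + iA\phi' e^{i\phi}$ and, crucially, split $A = 1 + (A-1)$ in the second summand, producing the three terms $A'e^{i\phi}$, $\phi' e^{i\phi}$ and $(A-1)\phi' e^{i\phi}$. For the first two I apply (\ref{eqn:14}) with $f = e^{i\phi}$, using $\|e^{i\phi}\|_{L^\infty} = 1$ together with Lemma \ref{lem:4} and $\|\phi'\|_{H^{s-1}} < r$ to bound $\|(e^{i\phi})'\|_{H^{s-1}} \leq C(s,r)\|\phi'\|_{H^{s-1}}$; this gives the bounds $C(s,r)\|A-1\|_{H^s}$ and $C(s,r)\|\phi'\|_{H^{s-1}}$ respectively (noting $\|A'\|_{H^{s-1}} = \|(A-1)'\|_{H^{s-1}} \leq \|A-1\|_{H^s}$). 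For the third term I apply (\ref{eqn:14}) with $f = A-1$ and $g = \phi'e^{i\phi}$, reusing the bound just obtained on $\phi'e^{i\phi}$ and the fact that $\|A-1\|_{L^\infty}, \|(A-1)'\|_{H^{s-1}} \leq C(s)r$. Summing, $\|q'\|_{H^{s-1}} \leq C(s,r)\,\theta$.

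Combining the two estimates gives $E^s(q) \leq C(s,r)\,\theta^2$, as desired. The one genuinely delicate point is the kinetic term: one must resist applying the product estimate directly to the product $\phi'\,q$, which would both reintroduce $\|q'\|_{H^{s-1}}$ on the right-hand side (circularity) and illegitimately invoke a global $H^s$ norm of $q\notin L^2$. Peeling the constant $1$ off the amplitude via $A = 1 + (A-1)$ sidesteps this, leaving every factor either equal to $e^{i\phi}$ (bounded in $L^\infty$, with derivative controlled by Lemma \ref{lem:4}) or equal to one of the genuinely small quantities $A-1$ and $\phi'$. Everything else is routine bookkeeping of the $r$-dependent constants.
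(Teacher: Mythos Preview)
Your proof is correct and follows essentially the same route as the paper: both split $E^s(q)$ into the potential and kinetic parts, handle $|q|^2-1$ via the quadratic expansion of $A^2-1$ (the paper invokes Lemma~\ref{lem:403}, you do it by hand), and control $q'$ via the product estimate Lemma~\ref{lem:2} together with Lemma~\ref{lem:4} for $\|(e^{i\phi})'\|_{H^{s-1}}$. The only cosmetic difference is that the paper applies Lemma~\ref{lem:2} directly to $A(e^{i\phi})'$ with $f=A$ (absorbing the constant via $\|A\|_{L^\infty}\le \|A-1\|_{L^\infty}+1$), whereas you first write $(e^{i\phi})'=i\phi'e^{i\phi}$ and split $A=1+(A-1)$ into three terms---same ingredients, same outcome.
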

\begin{proof}
    For the amplitudinal part of the energy, we know from Lemma \ref{lem:403} that 
    \begin{equation*}
        \||q|^2 - 1\|_{H^{s-1}} \leq \|A^2 - 1\|_{H^s} \leq C(s, r) \|A - 1\|_{H^s} \,.
    \end{equation*}
    For the remainder, we use Lemma \ref{lem:2}:
    \begin{align*}
        \|q'\|_{H^{s-1}} &\leq \|A' e^{i \phi}\|_{H^{s-1}} + \|A (e^{i \phi})'\|_{H^{s-1}} \\
        &\leq C(s) \|A'\|_{H^{s-1}} \big( \|e^{i\phi}\|_{L^\infty} + \|(e^{i \phi})'\|_{H^{s-1}} \big) \\
        &+ \|(e^{i \phi})'\|_{H^{s-1}} \big( \|A - 1\|_{L^\infty} + 1 + \|A'\|_{H^{s-1}}  \big) \,.
    \end{align*}
    We now conclude by estimating both appearances of $\|(e^{i \phi})'\|_{H^{s-1}}$ with Lemma \ref{lem:4}.
\end{proof}

\begin{proof}[Proof of Theorem \ref{thm:3} (ii)]
    We split the distance $d^s(q, p)$ into two parts:
    \begin{align*}
        d^s(q, p)^2 &\leq 2 \int_{\R} \inf_{\theta \in \R} \|\sech(y - \cdot) A (e^{i (\phi + \theta)} - e^{i \psi})\|_{H^s}^2 \dd y \\
        &+ 2 \int_{\R} \|\sech(y - \cdot) (B - A) e^{i \psi}\|_{H^s}^2 \dd y \\
        &= (I) + (II) \,.
    \end{align*}
    We use the algebra property of $H^s$ and Lemma \ref{lem:3} to estimate
    \begin{align*}
        (I) &\leq C(s) \sup_{y \in \R} \Big\|\sqrt{\sech(y - \cdot)} A e^{i \psi} \Big\|_{H^s}^2 \tilde{d}^s(1, e^{i (\phi - \psi)})^2 \\
        &\leq C(s, r) \big(\|A - 1\|_{H^s}^2 + 1 \big) \sup_{y \in \R} \Big\|\sqrt{\sech(y - \cdot)} e^{i \psi} \Big\|_{H^s} E^s(e^{i (\phi - \psi)}) \,.
    \end{align*}
    With Lemma \ref{lem:5} we can estimate $E^s(e^{i (\phi - \psi)})$ by $\|\phi' - \psi'\|_{H^{s-1}}^2$, and Lemma \ref{lem:2} yields
    \begin{equation*}
        \sup_{y \in \R} \Big\|\sqrt{\sech(y - \cdot)} e^{i \psi} \Big\|_{H^s} \leq C(s) \sup_{y \in \R} \Big\|\sqrt{\sech(y - \cdot)} \Big\|_{H^s} (\|e^{i \psi}\|_{L^\infty} + \|(e^{i \psi})'\|_{H^{s-1}}) \leq C(s, r) \,.
    \end{equation*}
    It follows that
    \begin{equation*}
        (I) \leq C(s, r) \, \theta^s((A, \phi'), (B, \psi'))^2 \,.
    \end{equation*}
    Note that
    \begin{align*}
        (II) &\leq C(s) \int_{\R} \Big\|\sqrt{\sech(y - \cdot)} (A - B) \Big\|_{H^s}^2 \Big( 1 + \inf_{\lambda \in \mathbb{S}^1} \Big\|\sqrt{\sech(y - \cdot)} (e^{i \psi} - \lambda) \Big\|_{H^s}^2 \Big) \dd y \\
        &\leq C(s) \left( \int_{\R} \Big\|\sqrt{\sech(y - \cdot)} (A - B) \Big\|_{H^s}^2 \dd y 
        + \big\|\sqrt{\sech}\big\|_{H^s}^2 \|A - B\|_{H^s}^2 \, \tilde{d}^s(1, e^{i \psi})^2 \right) \,.
    \end{align*}
    We can deal with the second term as before. For the first one, we use Lemma \ref{lem:21} $(iv)$ and Young's convolution inequality:
    \begin{align*}
        \int_{\R} \Big\|\sqrt{\sech(y - \cdot)} (A - B) \Big\|_{H^s}^2 \dd y 
        &\leq \sum_{k \in \Z} \sup_{y \in [k, k+1]} \sum_{j \in \Z} \Big\|\sqrt{\sech(y - \cdot)} (A - B) \Big\|_{W^{s,2}([j, j+3])}^2 \\
        &\leq \sum_{j, k \in \Z} \big\|\sqrt{\sech} \big\|_{C^{\lceil s \rceil+1}([k-j-3, k-j+1]))}^2 \|A - B\|_{W^{s,2}([j,j+3])}^2 \\
        &\leq \sum_{k \in \Z} \big\|\sqrt{\sech} \big\|_{C^{\lceil s \rceil+1}([k-3, k+1]))}^2 \sum_{j \in \Z} \|A - B\|_{W^{s,2}([j, j+3])}^2 \\
        &\leq C(s) \|A - B\|_{H^s}^2 \,.
    \end{align*}
    Therefore
    \begin{equation*}
        (II) \leq C(s, r) \, \theta^s((A, \phi), (B, \psi))^2 \,.
    \end{equation*}
    To conclude, we have shown that
    \begin{equation*}
        d^s(q, p)^2 \leq (I) + (II) \leq C(s, r) \, \theta^s((A, \phi), (B, \psi))^2 \,.
    \end{equation*}
\end{proof}

\begin{remark} \label{rmk:2}
    Recall the definition of $d^s_\ast\big\vert_B$ (see (\ref{eqn:19})). We have shown in particular that there exist constants such that
    \begin{equation*}
        \left(\sum_{k \in \Z} d^s_\ast\big\vert_{B_k}(q, p)^2 \right)^{\frac12} \leq C(s, r)\, d^s(q, p) \leq C(s, \delta, r) \left(\sum_{k \in \Z} d^s_\ast\big\vert_{B_k}(q, p)^2 \right)^{\frac12}
    \end{equation*}
    for all $q, p \in X^s$ with $|q|, |p| > \delta > 0$ and $d^s(1, q), d^s(1, p) < r$. Here the first estimate is Lemma \ref{lem:36}, while the second estimate actually follows from $(ii)$ together with the fact that we showed $(i)$ by proving (\ref{eqn:21}).
\end{remark}

Let us say a few words on how Corollary \ref{cor:1} follows from Theorem \ref{thm:3}.
\begin{proof}[Proof of Corollary \ref{cor:1}]
    The Madelung transform is well-defined on equivalence classes under multiplication by $\mathbb{S}^1$, as $v = \phi'$ ignores changes by a constant in the phase $\phi$. Note also that $s \geq 1$, and so for any $(\rho, v) \in \mathcal{Y}^s$ we have $v \in L^2 \subset L^1_{\loc}$. Therefore we can define
    \begin{equation*}
        \phi(x) = \int_0^x v(y) \dd y \,.
    \end{equation*}
    Recall that $b < \frac43$ and $\epsilon < \epsilon_0(\mu)$ (see (\ref{eqn:156})). Due to (\ref{eqn:157}) and (\ref{eqn:158}), there exists $\delta > 0$ such that $|q| > \delta$ for all $q \in X^s$ with $E(q) < b$ or $E^\mu(q) < \epsilon$. With Lemma \ref{lem:3} we find some $r = r(s, \epsilon, b) > 0$ such that $d^s(1, q) < r$.
    Then Theorem \ref{thm:3} establishes the bilipschitz estimates.
\end{proof}

\section{Proof of Theorem \ref{thm:2}}

Given that Theorem \ref{thm:3} establishes an equivalence between the relevant function spaces $(X^s, d^s)$ and $(\mathcal{Y}^s, \theta^s)$, the proof of Theorem \ref{thm:2} is now primarily a matter of carefully carrying over the results of Theorem \ref{thm:1}. This is straightforward for the existence and continuity results. Uniqueness requires a further Lemma.

\begin{lemma} \label{lem:71}
    Let $I \ni 0$ be an open time interval and $q_0 \in L^\infty \cap \dot{H}^1$. Suppose 
    \begin{equation*}
        q_1, q_2 \in C(I; L^2_{\loc}) \cap L^\infty(I; L^\infty \cap \dot{H}^1)
    \end{equation*} are two distributional solutions to (GP) with $q_1(0) = q_2(0) = q_0$. Then $q_1 = q_2$.
\end{lemma}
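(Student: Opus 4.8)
The plan is to pass to the difference $w = q_1 - q_2$, recognize that it solves a \emph{linear} Schrödinger equation with bounded potentials, and close an $L^2$ energy estimate by Gronwall. Subtracting the two copies of (GP) gives $i \del_t w + \del_{xx} w = 2(|q_1|^2 q_1 - |q_2|^2 q_2) - 2 w$, and the elementary identity
\begin{equation*}
    |q_1|^2 q_1 - |q_2|^2 q_2 = (|q_1|^2 + |q_2|^2)\, w + q_1 q_2\, \bar{w}
\end{equation*}
shows that $w$ satisfies
\begin{equation*}
    i \del_t w + \del_{xx} w = V w + \tilde{V} \bar{w}, \qquad V = 2(|q_1|^2 + |q_2|^2) - 2, \quad \tilde{V} = 2 q_1 q_2 \,.
\end{equation*}
The key structural point is that the coefficient $V$ of $w$ is \emph{real}-valued, and that both $V, \tilde{V} \in L^\infty(I; L^\infty)$ by the assumption $q_1, q_2 \in L^\infty(I; L^\infty)$; moreover $\del_x w \in L^\infty(I; L^2)$ and $w \in L^\infty(I; L^\infty)$, with $w(0) = 0$.

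The formal mechanism is the standard $L^2$ estimate. Using $\del_t w = i \del_{xx} w - i(V w + \tilde{V} \bar{w})$ and pairing against $\bar w$, the dispersive term contributes $2\Real(i \int \bar{w} \del_{xx} w) = 0$, the real-coefficient term contributes $2\Real(i\int V |w|^2) = 0$, and only the off-diagonal term survives, so that
\begin{equation*}
    \frac{1}{2} \frac{\dd}{\dd t} \|w\|_{L^2}^2 = \Imag \int_{\R} \tilde{V}\, \bar{w}^2 \dd x \leq \|\tilde{V}\|_{L^\infty} \|w\|_{L^2}^2 \,.
\end{equation*}
Were $w$ known to lie in $C(I; L^2)$, Gronwall with $w(0)=0$ would immediately yield $w \equiv 0$.

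The actual difficulty is that a priori $w$ is only in $C(I; L^2_{\loc})$, so I would not carry out the global estimate directly but instead localize. Fix a smooth cut-off $\chi$ with $\chi \equiv 1$ on $[-1,1]$ and $\supp \chi \subset [-2,2]$, set $\varphi_R = \chi(\cdot/R)^2$, and put $g_R(t) = \int_{\R} |w|^2 \varphi_R \dd x$. Repeating the computation above with the weight $\varphi_R$, the only new contribution is the commutator term $2\Imag \int \del_x w\, \bar{w}\, \varphi_R'$, which I would bound by
\begin{equation*}
    2 \|\del_x w\|_{L^2} \|w\|_{L^\infty} \|\varphi_R'\|_{L^2} \leq \frac{C'}{\sqrt{R}} \,,
\end{equation*}
since $\varphi_R'$ is supported in $R \leq |x| \leq 2R$ with $\|\varphi_R'\|_{L^2} \lesssim R^{-1/2}$, and $\|\del_x w\|_{L^2}, \|w\|_{L^\infty}$ are uniformly bounded on $I$. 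Thus $g_R' \leq 2\|\tilde{V}\|_{L^\infty} g_R + C' R^{-1/2}$ with $g_R(0) = 0$, and Gronwall on a fixed interval $[-T, T]$ gives $g_R(t) \leq C'' R^{-1/2}$. Letting $R \to \infty$, monotone convergence yields $\|w(t)\|_{L^2}^2 = \lim_R g_R(t) = 0$, hence $q_1 = q_2$. This localization is what replaces the missing global $L^2$ control, and it is precisely the $\dot{H}^1$ bound that makes the commutator error decay despite the infinite speed of propagation of the Schrödinger flow.

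The step I expect to require the most care is the rigorous justification of the differential identity for $g_R(t)$: since $w$ is only continuous into $L^2_{\loc}$ and solves (GP) in the distributional sense, $\del_t w$ exists only as a distribution, so differentiating $g_R$ in time and integrating by parts in $x$ is not literally legitimate. I would remedy this by regularizing, e.g. mollifying $w$ in space (and, if needed, in time), establishing the inequality for the regularized functions where all manipulations are classical, and then passing to the limit; because $\varphi_R$ has compact support for each fixed $R$, this regularization is entirely local and unproblematic. Everything else — the algebraic identity, the cancellation of the real part, and the Gronwall/limit argument — is routine.
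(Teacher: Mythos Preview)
Your proposal is correct and follows essentially the same approach as the paper: derive a Schr\"odinger-type equation for the difference $w=q_1-q_2$, perform a localized $L^2$ energy estimate with a scaled cut-off, control the commutator term via the uniform $\dot H^1$ bound, apply Gronwall, and send the cut-off parameter to infinity. The only cosmetic differences are that you use the cleaner identity $|q_1|^2q_1-|q_2|^2q_2=(|q_1|^2+|q_2|^2)w+q_1q_2\bar w$ and exploit that $V$ is real (so its contribution vanishes outright), whereas the paper expands everything in $b$ and $q_2$; and for the commutator you bound $\|\bar w\,\varphi_R'\|_{L^2}$ via $\|w\|_{L^\infty}\|\varphi_R'\|_{L^2}$, while the paper instead chooses the cut-off so that $|\phi_n'|\le K n^{-1}\sqrt{\phi_n}$ and bounds by $\|\del_x b\|_{L^2}\cdot Kn^{-1}\|b\sqrt{\phi_n}\|_{L^2}$. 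Both routes give an error of size $R^{-1/2}$ (resp.\ $n^{-1/2}$) and close identically.
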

\begin{proof}
    See Appendix C.
\end{proof}

This result is necessary because Theorem \ref{thm:1}, in the way it is stated in \cite{KochLiao}, only yields uniqueness for the following class of solutions, which for the case $s \geq 1$ is a priori smaller.
\begin{definition}[Solutions to (GP) \cite{KochLiao}] \label{def:1}
    Let $s \geq 0$. We say that $q \in C(I; X^s)$ is a solution of the Gross-Pitaevskii equation (GP) with initial data $q_0 \in X^s$ on the open time interval $I \ni 0$ if there exists $\tilde{q}: I \longrightarrow H^s_{\loc}$ such that the following hold:
    \begin{enumerate}[(i)]
        \item $\tilde{q}$ solves (GP) in the sense of distributions on $I \times \R$.
        \item $\tilde{q}$ projects to $q$, which means that $\tilde{q} \mathbb{S}^1 = q$.
        \item We have
        \begin{equation*}
            \big[t \mapsto \tilde{q}(t) - \tilde{q}(0)\big] \in C(I; L^2(\R)) \,.
        \end{equation*}
        \item For all compact intervals $[a, b] \subset I$ and for some (and hence for all) regularized initial data $\tilde{q}_0^\ast$ of $\tilde{q}(0)$ we have
        \begin{equation*}
            \big[t \mapsto \tilde{q}(t) - \tilde{q}^\ast_0 \big] \in L^4([a, b] \times \R) \,.
        \end{equation*} 
    \end{enumerate}
\end{definition}

The uniqueness result in Theorem \ref{thm:1} for $s \geq 1$ is therefore weaker than the one in Lemma \ref{lem:71}. The proofs, however, are almost identical: in \cite{KochLiao} uniqueness is shown by a classical argument with an energy estimate and Gr\"onwall's inequality. We extend this argument for $s \geq 1$ to gain Lemma \ref{lem:71}.

\begin{remark}
    If $\tilde{p} \in C(I; L^2_{\loc}) \cap L^\infty(I; L^\infty \cap \dot{H}^1)$ is a distributional solution to (GP), as in Lemma \ref{lem:71}, with initial data $\tilde{p}(0) \mathbb{S}^1 \in X^1$, then $\tilde{p} \mathbb{S}^1$ is also a solution in the sense of Definition \ref{def:1}. The reason is that by Theorem \ref{thm:1} there exists a solution $q \in C(I; X^1)$ in the sense of Definition \ref{def:1} with initial data $q(0) = \tilde{p}(0) \mathbb{S}^1$. One can see that this has a representative $\tilde{q} \in C(I; L^2_{\loc}) \cap L^\infty(I; L^\infty \cap \dot{H}^1)$ which solves (GP) in distribution, so Lemma \ref{lem:71} implies $\tilde{q} = \tilde{p}$.
\end{remark}

Theorem \ref{thm:2} states that (hGP) is globally-in-time well-posed, meaning that there exist solutions, they are unique, and the flow map is continuous. The structure of the proof is to transfer the existence and continuity result for (GP) from Theorem \ref{thm:1} via the Madelung transform over to (hGP). This requires the absence of vacuum, which we obtain by the energy assumptions $E < \frac43$ or $E^\mu < \epsilon_0(\mu)$ (see (\ref{eqn:157}) and (\ref{eqn:158})). Uniqueness for (hGP) is similarly inferred from the uniqueness result for (GP) in Lemma \ref{lem:71}.

Recall that by Lemma \ref{lem:3} the energy functionals
$E^s: X^s \longrightarrow \R_\geq$ are continuous. Recall furthermore the definitions (\ref{eqn:12}).

\begin{proof}[Proof of Theorem \ref{thm:2}]

    \textbf{Existence.}
    We are given an initial data $(\rho_0, v_0) \in \mathcal{Y}^s$ which fulfills one of the bounds $\mathcal{E}(\rho_0, v_0) < \frac43$ or $\mathcal{E}^\mu(\rho_0, v_0) < \epsilon_0(\mu)$. We define $q_0 = \mathcal{M}^{-1}(\rho_0, v_0)$ and obtain via Theorem \ref{thm:1} a solution $q \in C_b(\R; X^s)$ of (GP) in the sense of Definition \ref{def:1}. 
    Our solution $q$ has a special representative $\tilde{q} \in \mathcal{S}'(\R \times \R)$.
    In both cases $E(q_0) < \frac43$ and $E^\mu(q_0) < \epsilon_0(\mu)$, we obtain either (\ref{eqn:158}) or (\ref{eqn:157}), so there exists some $\delta > 0$ depending on the initial data such that $|\tilde{q}| > \delta > 0$. Now Corollary \ref{cor:1} implies $(\rho, v) = \mathcal{M}(\tilde{q}) \in C_b(\R; \mathcal{Y}^s)$.
    \begin{center}
        \begin{tikzpicture}
            \node (A) at (0,0) {$(\rho_0, v_0)$};
            \node (B) at (3,0) {$(\rho, v)(t)$};
            \node (C) at (0,-2) {$q_0$};
            \node (D) at (3,-2) {$q(t)$};
            \draw[->, dash pattern=on 2.0pt off 0.8pt, decorate,
            decoration={
                snake,
                amplitude = .4mm,
                segment length = 2mm,
                post length=0.9mm}] (A) -- node[above]{(hGP)} (B);
            \draw[|->] (A) -- node[left]{$\mathcal{M}^{-1}$} (C);
            \draw[->, decorate, decoration={
                snake,
                amplitude = .4mm,
                segment length = 2mm,
                post length=0.9mm}]
            (C) -- node[below]{(GP)} (D);
            \draw[|->] (D) -- node[right]{$\mathcal{M}$} (B);
        \end{tikzpicture}
    \end{center}
    
    We show that $(\rho, v)$ is a distributional solution of (hGP) in the sense of Definition \ref{def:2}.
    We fix a ball $B_0 \subset \R$ and a time interval $J = (a, b) \subset \R$ with $0 \in J$. It suffices to verify that (hGP) holds in distribution, i.e. when tested against any test function $f \in \mathcal{D}(J \times B_0)$. 
    
    \textbf{On regularity.}
    Due to Lemmas \ref{lem:212} and \ref{lem:36} for $s \geq 1$, we know that $\tilde{q}, \tilde{q}^{-1} \in L^\infty(J; W^{1,2}(B_0))$.
    From these considerations $\del_{xx} \tilde{q} \in L^\infty(J; W^{-1,2}(B_0))$ and $(|\tilde{q}|^2 - 1) \tilde{q} \in L^\infty(J; W^{1,2}(B_0))$ directly follow. Then $\del_t \tilde{q} \in L^\infty(J; W^{-1,2}(B_0))$ holds because $\tilde{q}$ solves $(GP)$ in the sense of distributions. 
    
    As a consequence of duality and the algebra property of $H^1$, one obtains the product estimate
    $\|f g\|_{H^{-1}} \leq C \|f\|_{H^1} \|g\|_{H^{-1}} $.
    From this we obtain some regularity for some of the more difficult terms appearing in the subsequent calculations, for example $\del_t \tilde{q} \overline{\tilde{q}},\, \del_{xx} \tilde{q} \overline{\tilde{q}} \in L^\infty(J; W^{-1,2}(B_0))$. 
    We now present approximation arguments that derive (hGP)\textsubscript{1} and (hGP)\textsubscript{2} from (GP).
    
    \textbf{Obtaining (hGP)\textsubscript{1} from (GP)}.
    Set $\tilde{q}_\epsilon = \eta_\epsilon \ast \tilde{q}$ for a standard mollifier $(\eta_\epsilon)_{\epsilon > 0}$, i.e. some $\eta_\epsilon(x) = \eta(\epsilon^{-1}(\epsilon^{-1} x))$ where $\eta \in C_c^\infty(\R;\R_\geq)$ with $\int \eta \dd x = 1$. Note that $|\tilde{q}| > \delta$ implies $|\tilde{q}_\epsilon| > \frac{\delta}{2}$ for sufficiently small $\epsilon > 0$ as we have sufficient regularity. We define $\rho_\epsilon = |\tilde{q}_\epsilon|^2$ and $v_\epsilon = \Imag\big[\frac{\del_x \tilde{q}_\epsilon}{\tilde{q}_\epsilon}\big]$.
    Note furthermore the identity $\frac{\del_x \tilde{q}_\epsilon}{\tilde{q}_\epsilon} = \frac12 \frac{\del_x \rho_\epsilon}{\rho_\epsilon} + i v_\epsilon$, which we use below.
    Equation (hGP)\textsubscript{1} can be obtained by multiplying (GP) for $\tilde{q}_\epsilon$ with $\overline{\tilde{q}}_\epsilon$, taking the imaginary part, and then the limit:
    \begin{align*}
        \allowdisplaybreaks
        0 = \Imag \left(\overline{\tilde{q}} \, \text{(GP)} \right) \xleftarrow{\epsilon \rightarrow 0}\, &\Imag \left[ i \del_t \tilde{q}_\epsilon \overline{\tilde{q}_\epsilon} + \del_{xx} \tilde{q}_\epsilon \overline{\tilde{q}_\epsilon} - 2 \tilde{q}_\epsilon (|\tilde{q}_\epsilon|^2 - 1) \overline{\tilde{q}_\epsilon} \right] \\
        =\, &\Real \left[ \del_t \tilde{q}_\epsilon \overline{\tilde{q}_\epsilon} \right] + \del_x \Imag\left[\frac{\del_x \tilde{q}_\epsilon}{\tilde{q}_\epsilon} \tilde{q}_\epsilon \overline{\tilde{q}_\epsilon} \right] - \Imag \left[ \del_x \tilde{q}_\epsilon \del_x \overline{\tilde{q}_\epsilon} \right] - \Imag \left[2 |\tilde{q}_\epsilon|^2 (|\tilde{q}_\epsilon|^2 - 1) \right] \\
        =\, &\frac12 \del_t (|\tilde{q}_\epsilon|^2) + \del_x \left( |\tilde{q}_\epsilon|^2 \Imag\left[ \frac{\del_x \tilde{q}_\epsilon}{\tilde{q}_\epsilon} \right] \right) \\
        =\, &\frac12 \del_t \rho_\epsilon + \del_x (\rho_\epsilon v_\epsilon) \\
        & \hspace{-1.1em} \xrightarrow{\epsilon \rightarrow 0} \frac12 \del_t \rho + \del_x(\rho v) \,.
    \end{align*}
    We have to justify the limits in distribution on both sides.
    Observe that
    \begin{align*}
        \left|\int_J \int_{B_0} (\del_t \tilde{q}_\epsilon \overline{\tilde{q}_\epsilon} - \del_t \tilde{q} \overline{\tilde{q}}) \overline{f} \right|
        &\lesssim \|\del_t \tilde{q}_\epsilon - \del_t \tilde{q}\|_{L^\infty(J; W^{-1,2}(B_0))} \|\tilde{q}_\epsilon\|_{L^\infty(J; W^{1,2}(B_0))} \|f\|_{L^\infty(J; W^{1,2}(B_0))} \\
        &+\|\del_t \tilde{q}\|_{L^\infty(J; W^{-1,2}(B_0))} \|\tilde{q}_\epsilon - \tilde{q}\|_{L^\infty(J; W^{1,2}(B_0))} \|f\|_{L^\infty(J; W^{1,2}(B_0))} \\
        &\xrightarrow{\epsilon \rightarrow 0} 0 \,.
    \end{align*}
    With the same estimates, we can take the limit of the distribution $\del_{xx} \tilde{q}_\epsilon \overline{\tilde{q}_\epsilon}$. The convergence of the nonlinear term follows similarly.
    We have shown that
    \begin{equation*}
        i \del_t \tilde{q}_\epsilon \overline{\tilde{q}_\epsilon} + \del_{xx} \tilde{q}_\epsilon \overline{\tilde{q}_\epsilon} - 2 \tilde{q}_\epsilon (|\tilde{q}_\epsilon|^2 - 1) \overline{\tilde{q}_\epsilon} \xrightarrow{\epsilon \rightarrow 0} \overline{\tilde{q}} \big( i \del_t \tilde{q}  + \del_{xx} \tilde{q} - 2 \tilde{q} (|\tilde{q}|^2 - 1) \big) = 0
    \end{equation*}
    in distribution on $J \times B_0$.
    As
    \begin{align*}
        & \quad\, \left|\int_J \int_{B_0} (\rho_\epsilon v_\epsilon - \rho v) \overline{\del_x f} \dd x \dd t \right| \\ &\lesssim  \big(\|\tilde{q}\|_{L^\infty(J \times B_0)} + \|\tilde{q}_\epsilon\|_{L^\infty(J \times B_0)}\big) \|\tilde{q}_\epsilon - \tilde{q}\|_{L^\infty(J \times B_0)} \|v\|_{L^\infty(J; L^2(B_0))}\|\del_x f\|_{L^2(J \times B_0)} \\
        &+ \big(\|\tilde{q}\|_{L^\infty(J \times B_0)} + \|\tilde{q}_\epsilon\|_{L^\infty(J \times B_0)}\big) \|\tilde{q}_\epsilon\|_{L^\infty(J \times B_0)} \|v_\epsilon - v\|_{L^\infty(J; L^2(B_0))} \|\del_x f\|_{L^2(J \times B_0)} \xrightarrow{\epsilon \rightarrow 0} 0 \,,
    \end{align*}
    and we can similarly show $\del_t \rho_\epsilon \xrightarrow{\epsilon \rightarrow 0} \del_t \rho$, we have
    \begin{equation*}
        \frac12 \del_t \rho_\epsilon + \del_x (\rho_\epsilon v_\epsilon) \xrightarrow{\epsilon \rightarrow 0} \frac12 \del_t \rho + \del_x (\rho v)
    \end{equation*}
    in distribution on $J \times B_0$.
    
    \textbf{Obtaining (hGP)\textsubscript{2} from (GP)}. We now repeat these arguments for the second equation (hGP)\textsubscript{2}. Here we divide (GP) for $\tilde{q}_\epsilon$ by $\tilde{q}_\epsilon$, take a further derivative, the real part, and then the limit:
    \begin{align*}
        \allowdisplaybreaks
        0 = \Real\del_x \left(\frac{\text{(GP)}}{\tilde{q}}\right)\xleftarrow{\epsilon \rightarrow 0} &\, \Real \left[ \del_x \left( \frac{i \del_t \tilde{q}_\epsilon}{\tilde{q}_\epsilon} + \frac{ \del_{xx} \tilde{q}_\epsilon}{\tilde{q}_\epsilon} - 2 (|\tilde{q}_\epsilon|^2 - 1) \right) \right] \\
        = &\, - \Imag \left[ \del_x \del_t (\ln \tilde{q}_\epsilon)\right] + \Real \left[ \del_x \left( \del_x \left( \frac{\del_x \tilde{q}_\epsilon}{\tilde{q}_\epsilon} \right) + \left( \frac{\del_x \tilde{q}_\epsilon}{\tilde{q}_\epsilon} \right)^2 \right) \right] - 2 \del_x (|\tilde{q}_\epsilon|^2) \\
        = &\, - \del_t v_\epsilon - \del_x(v_\epsilon^2) - 2 \del_x \rho_\epsilon +  \del_x \left( \del_x \left( \frac12 \frac{\del_x \rho_\epsilon}{\rho_\epsilon} \right) + \left( \frac12 \frac{\del_x \rho_\epsilon}{\rho_\epsilon} \right)^2 \right) \\
        &\hspace{-1.1em} \xrightarrow{\epsilon \rightarrow 0}
        - \del_t v - \del_x(v^2) - 2 \del_x \rho +  \del_x \left( \del_x \left( \frac12 \frac{\del_x \rho}{\rho} \right) + \left( \frac12 \frac{\del_x \rho}{\rho} \right)^2 \right) \,.
    \end{align*}
    Of course, the limits have to be justified again. For the left-hand side, we can proceed just as before since $\tilde{q}^{-1} \in L^\infty(J; W^{1,2}(B_0))$. On the right hand side the difficult terms are $v^2$ and $\left(\frac12 \frac{\del_x \rho}{\rho}\right)^2$, as here the square of a distribution in $H^{s-1}$ is taken. The situation would be much more difficult if we did not assume $s \geq 1$. In our case we indeed have $v, \frac{\del_x \rho}{\rho} \in C(J; L^2(B_0))$, which implies that the squares are trivially defined. Furthermore
    \begin{align*}
        &\quad\, \left| \int_J \int_{B_0} (v_\epsilon^2 - v^2) \del_x f \dd x \right| \\
        &\leq \|v_\epsilon - v\|_{L^\infty(J; L^2(B_0))} \big(\|v_\epsilon\|_{L^\infty(J; L^2(B_0))}  + \|v\|_{L^\infty(J; L^2(B_0))}\big) \|\del_x f\|_{L^\infty(J \times B_0)} \xrightarrow{\epsilon \rightarrow 0} 0 \,,
    \end{align*}
    and the same estimate works for $\left( \frac12 \frac{\del_x \rho}{\rho} \right)^2$. The remaining terms are strictly easier to deal with.
    
    \textbf{Uniqueness.}
    Let $0 \in I \subset \R$ be a bounded open interval and let $(\rho_1, v_1), (\rho_2, v_2) \in C(I; \mathcal{Y}^s)$ be two solutions to (hGP) in the sense of the theorem, both with initial data $(\rho_0, v_0) \in \mathcal{Y}^s$. In particular, they satisfy one of the energy bounds $\mathcal{E} < b < \frac43$ or $\mathcal{E}^\mu < c(\mu) \epsilon < c(\mu) \epsilon_0(\mu)$ (see (\ref{eqn:7}) and (\ref{eqn:156})). As before, this implies that there exists a $\delta > 0$ so that $\sqrt{\rho_k} > \delta$, where $k \in \{0, 1, 2\}$. Since $v_k \in L^2 \subset L^1_{\loc}$, we can define $\phi_k(x) = \int_0^x v_k(y) \dd y$ and $\tilde{q}_k = \sqrt{\rho_k} e^{i \phi_k}$. Note that $\tilde{q}_k$ having uniformly bounded energy $E^1$ implies $\tilde{q}_k \in L^\infty(I; L^\infty \cap \dot{H}^1)$. We now fix $j \in \{1, 2\}$.
    Writing $q_j = \tilde{q}_j \mathbb{S}^1$ for the equivalence class, we know from Corollary \ref{cor:1} that $q_j \in C(I; X^s)$. 
    
    Just as in the existence part of the proof, one can show that $(\rho_j, v_j)$ solving (hGP) implies that for the quantity
    \begin{equation} \label{eqn:160}
        Q_j = i \del_t \tilde{q}_j + \del_{xx} \tilde{q}_j - 2 \tilde{q}_j (|\tilde{q}_j|^2 - 1)
    \end{equation}
    we have
    \begin{equation} \label{eqn:159}
        \Imag\left[Q_j \overline{\tilde{q}_j} \,\right] = 0 \qquad \text{ and } \qquad \del_x \Real\left[ \frac{Q_j}{\tilde{q}_j}\right] = 0
    \end{equation}
    in the sense of distributions.
    We sketch the argument that follows with a diagram.
    \begin{center}
        \begin{tikzpicture}
            \node (E) at (-3,0) {$(\rho_1, v_1)(t)$};
            \node (F) at (-3,-2) {$\tilde{q}_1(t)$};
            \node (A) at (0,0) {$(\rho_0, v_0)$};
            \node (B) at (3,0) {$(\rho_2, v_2)(t)$};
            \node (C) at (0,-2) {$\tilde{q_0}$};
            \node (D) at (3,-2) {$\tilde{q}_2(t)$};
            \node (G) at (0, -4) {$p_0$};
            \node (H) at (3, -4) {$p_2(t)$};
            \node (I) at (-3, -4) {$p_1(t)$};
            \draw[->, decorate, 
            decoration={
                snake,
                amplitude = .4mm,
                segment length = 2mm,
                post length=0.9mm}] 
            (A) -- node[above]{(hGP)} (B);
            \draw[->, decorate, 
            decoration={
                snake,
                amplitude = .4mm,
                segment length = 2mm,
                post length=0.9mm}] 
            (A) -- node[above]{(hGP)} (E);
            \draw[|->] (A) -- node[left]{$\mathcal{M}^{-1}$} (C);
            \draw[|->] (E) -- node[left]{$\mathcal{M}^{-1}$} (F);
            \draw[|->] (B) -- node[right]{$\mathcal{M}^{-1}$} (D);
            \draw[->, dashed] (C) -- node[below]{(\ref{eqn:160}) - (\ref{eqn:159})} (D);
            \draw[->, dashed] (C) -- node[below]{(\ref{eqn:160}) - (\ref{eqn:159})} (F);
            \draw[|->] (F) -- node[left]{$\cdot e^{i G_1(t)}$} (I);
            \draw[-, double equal sign distance] (C) -- node[left]{} (G);
            \draw[|->] (D) -- node[right]{$\cdot e^{i G_2(t)}$} (H);
            \draw[->, dash pattern=on 2.0pt off 0.8pt, decorate, 
            decoration={
                snake,
                amplitude = .4mm,
                segment length = 2mm,
                post length=0.9mm}] (G) -- node[below]{(GP)} (I);
            \draw[->, dash pattern=on 2.0pt off 0.8pt, decorate, 
            decoration={
                snake,
                amplitude = .4mm,
                segment length = 2mm,
                post length=0.9mm}] (G) -- node[below]{(GP)} (H);
            \draw[-, double equal sign distance] (I) to[out=-60,in=-120,looseness=0.5] node[below] {Lemma \ref{lem:71}} (H);
        \end{tikzpicture}
    \end{center}
    Due to (\ref{eqn:159}) we have in particular
    \begin{equation*}
        \Imag\left[\frac{Q_j}{\tilde{q}_j}\right] = \Imag\left[\frac{Q_j \overline{\tilde{q}_j}}{|\tilde{q}_j|^2}\right]
        = \frac{\Imag [Q_j \overline{\tilde{q}_j}]}{|\tilde{q}_j|^2} = 0 \,,
    \end{equation*}
    and hence for every $t \in I$ there exists a $g_j(t) \in \R$ so that
    \begin{equation*}
        g_j(t) = \frac{Q_j}{\tilde{q}_j} \,.
    \end{equation*}
    We see that, in fact, $\tilde{q}_j$ does not necessarily solve (GP). The reason is that for each time $t \in I$ we had to make an arbitrary choice of a constant-in-space phase rotation, as this information is lost in the Madelung transform. This choice was the arbitrary lower limit $0$ in the integral $\phi(t) = \int_0^t v(s) \dd s$.
    In order to find solutions to (GP), we would now like to define
    \begin{equation*}
        p_j(t) = e^{i G_j(t)} \tilde{q}_j(t) \quad \text{ where } \quad G_j(t) = \int_0^t g_j(s) \dd s \,.
    \end{equation*}
    Then
    \begin{equation*}
        i \del_t p_j + \del_{xx} p_j - 2 p_j (|p_j|^2 - 1) = Q_j - G'(t) \tilde{q}_j = 0 \,.
    \end{equation*}
    This argument requires $g_j: I \longrightarrow \R$ to be locally integrable. We show that $g_j \in C(I; \R)$ by verifying that $Q_j \in C(I; W^{-1,2}(B_0))$ for any ball $B_0 \subset \R$. With the same reasoning as in the existence part of the proof, $\rho_j \in C(I; W^{1,2}(B_0))$ and $v_j \in C(I; L^2(B_0))$ solving (hGP) in distribution implies $\rho_j \in C^1(I; W^{-1,2}(B_0))$ and $v_j \in C^1(I; W^{-1,1}(B_0))$. In particular we have $\del_t \phi_j \in C(I; L^1(B_0))$. Observe that
    \begin{align*}
        Q_j = i \frac{\del_t \rho_j}{\rho_j} \tilde{q}_j + i (\del_t \phi_j) \tilde{q}_j + \del_{xx} \tilde{q}_j - 2 \tilde{q}_j (|\tilde{q}_j|^2 - 1) \,.
    \end{align*}
    Verifying the products of distributions, each term can now be seen to be in $C(I; W^{-1, 2}(B_0))$. 
    We have shown that for any bounded interval $I \ni 0$, both $p_1$ and $p_2$ are distributional solutions to (GP) with initial data $p_1(0) = p_2(0) = \tilde{q}_0$. At the same time $p_j \in C(I; L^2_{\loc}) \cap L^\infty(I; L^\infty_{t,x} \cap \dot{H}^1)$. Therefore Lemma \ref{lem:71} implies $p_1 = p_2$, from which $q_1 = q_2$ in $C(I; X^s)$ and $(\rho_1, v_1) = (\rho_2, v_2)$ follow.
    
    \textbf{Continuity.}
    This is a direct consequence of the continuity result for (GP) from Theorem \ref{thm:1}, the continuity of the energy functionals from Lemma \ref{lem:3}, and the local bilipschitz equivalence from Theorem \ref{thm:3}.
\end{proof}
    
\appendix

\section{Absence of vacuum for small energies}

\begin{lemma} \label{lem:0}
    For $\delta \in [0, 1]$ and $s \in \big(\frac12, 1\big]$ define
    \begin{equation*}
        E^s_\delta = \inf\big\{E^s(q): q \in H^s_{\loc}, \inf_{x \in \R} |q(x)| \leq \delta\big\} \,.
    \end{equation*}
    Then $E^s_1 = 0$, the function $\delta \mapsto E^s_\delta$ is decreasing, and there exists a constant $\tilde{C}(s) > 0$ so that
    \begin{equation} \label{eqn:260}
        E^s_\delta \geq \frac{(1 - \delta)^2}{\tilde{C}(s)}\,.
    \end{equation}
    
    Assume $s = 1$ and write $E_\delta = E^1_\delta$.
    Set $q_0 = \tanh$, $q_1 = 1$, and for $\delta \in (0, 1)$ define
    \begin{equation} \label{eqn:27}
        q_\delta(x) = \tanh\big(|x| + \tanh^{-1}(\delta)\big) \,.
    \end{equation}
    We have 
    \begin{equation*}
        E_\delta = E(q_\delta) = \frac43 - 2 \delta + \frac23 \delta^3 \,.
    \end{equation*}
    There exists a strictly decreasing inverse function $\tilde{\delta}: [0,\frac43] \longrightarrow [0,1]$ with $\tilde{\delta}(0) = 1$, $\tilde{\delta}(\frac43) = 0$ and
    \begin{equation*}
        \tilde{\delta}(b) = \inf\big\{ \inf_{x \in \R} |q(x)|: q \in H^1_{\loc}, E(q) \leq b \big\}.
    \end{equation*}
\end{lemma}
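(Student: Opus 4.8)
The plan is to treat the three assertions of Lemma \ref{lem:0} separately: the soft properties of $E^s_\delta$ for $s \in (\tfrac12,1]$, the explicit computation at $s = 1$, and the construction and characterization of the inverse $\tilde\delta$.

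\textbf{Soft properties and the quantitative bound.} First, $E^s_1 = 0$ is immediate by testing with $q \equiv 1$, since $\inf_x|q| = 1 \le 1$ and $E^s(1) = 0$. Monotonicity is also immediate, as $\delta_1 \le \delta_2$ makes the admissible set for $E^s_{\delta_1}$ a subset of that for $E^s_{\delta_2}$, so the infimum over the larger set is no larger. For the quantitative bound (\ref{eqn:260}) I would argue by interpolation; we may assume $E^s(q) < \infty$. Fix $x_0$ with $|q(x_0)| \le \delta$. Since $s > \tfrac12$ gives $H^s_{\loc} \hookrightarrow C$, the function $w := 1 - |q|^2$ is continuous and $w(x_0) = 1 - |q(x_0)|^2 \ge 1 - \delta^2 \ge 1 - \delta$. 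The heart is to bound $\|w\|_{L^\infty}$ by the energy. Using $\|w\|_{H^s} \lesssim \|w\|_{H^{s-1}} + \|w'\|_{H^{s-1}}$ together with the Sobolev embedding $H^s \hookrightarrow L^\infty$ (valid for $s > \tfrac12$), the identity $w' = -2\Real(\bar q q')$, and the product estimate (\ref{eqn:14}), I obtain a bound of the schematic form $\|w\|_{L^\infty} \le C(s)\big(Y + X(\|q\|_{L^\infty} + X)\big)$, where $X = \|q'\|_{H^{s-1}}$ and $Y = \||q|^2 - 1\|_{H^{s-1}}$. Since $\|q\|_{L^\infty} \le (1 + \|w\|_{L^\infty})^{1/2}$, the quantity $M := \|w\|_{L^\infty}$ reappears on the right, and a Young-inequality absorption of the term $X M^{1/2}$ into the left-hand side yields $M \lesssim Y + X + X^2$. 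Hence if $E^s(q) \le 1$ (so $X, Y \le \sqrt2$) we get $1 - \delta \le M \le C(s)\sqrt{E^s(q)}$, i.e. $E^s(q) \ge (1-\delta)^2/\tilde C(s)$, while $E^s(q) > 1$ gives the bound trivially. To sidestep any global boundedness issue for $q$, all estimates can first be localized to a fixed ball around $x_0$ via Lemma \ref{lem:21}; the absorption step is unchanged.

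\textbf{The sharp identity at $s = 1$.} Write $\tilde b(\delta) := \tfrac43 - 2\delta + \tfrac23\delta^3$; I would prove $E_\delta = \tilde b(\delta)$ by matching lower and upper bounds. For the lower bound, put $A = |q|$; the diamagnetic inequality $|A'| \le |q'|$ gives $E(q) \ge \int \tfrac12(A')^2 + \tfrac12(A^2-1)^2\,dx$, and the pointwise AM--GM inequality gives the Bogomol'nyi bound $\tfrac12(A')^2 + \tfrac12(A^2-1)^2 \ge |A'|\,|A^2-1| \ge |A'(A^2-1)|$. Finite energy forces $A \to 1$ at $\pm\infty$, so with the primitive $G(A) = A - \tfrac13 A^3$ (increasing on $[0,1]$) the fundamental theorem of calculus on each of $(-\infty,x_0]$ and $[x_0,\infty)$ gives $\int |A'(A^2-1)|\,dx \ge |G(1) - G(A(x_0))| \ge G(1) - G(\delta)$; adding the two halves yields $E(q) \ge 2\big(G(1) - G(\delta)\big) = \tilde b(\delta)$. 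For the upper bound, the competitor $q_\delta(x) = \tanh(|x| + \tanh^{-1}\delta)$ has $\inf_x q_\delta = q_\delta(0) = \delta$, so it is admissible, and on $x > 0$ it satisfies the first-order equation $q_\delta' = 1 - q_\delta^2$, making AM--GM an equality there; by evenness and the substitution $u = q_\delta(x)$ one computes $E(q_\delta) = 2\int_\delta^1 (1 - u^2)\,du = \tilde b(\delta)$. Combining the two, $E_\delta = E(q_\delta) = \tilde b(\delta)$ and the infimum is attained at $q_\delta$.

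\textbf{The inverse function.} The map $\tilde b$ satisfies $\tilde b(0) = \tfrac43$, $\tilde b(1) = 0$, and $\tilde b'(\delta) = 2(\delta^2 - 1) < 0$ on $[0,1)$, so it is a strictly decreasing bijection $[0,1] \to [0,\tfrac43]$ and admits a strictly decreasing inverse $\tilde\delta$ with $\tilde\delta(0) = 1$ and $\tilde\delta(\tfrac43) = 0$. For the variational characterization, I would show both inequalities. If $E(q) \le b$ but $\delta' := \inf_x|q| < \tilde\delta(b)$, then $E(q) \ge E_{\delta'} = \tilde b(\delta') > \tilde b(\tilde\delta(b)) = b$ by strict monotonicity, a contradiction; hence $\inf_x|q| \ge \tilde\delta(b)$ for every admissible $q$, giving ``$\ge$''. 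The reverse inequality holds because $q_{\tilde\delta(b)}$ realizes $\inf_x|q| = \tilde\delta(b)$ with $E(q_{\tilde\delta(b)}) = b$, so it lies in the admissible set. Thus the infimum equals $\tilde\delta(b)$ and is attained. I expect the genuinely delicate step to be the quantitative bound (\ref{eqn:260}) in the fractional range $\tfrac12 < s < 1$, specifically the absorption argument that closes the circular dependence on $\|q\|_{L^\infty}$ arising from the product estimate for $w' = -2\Real(\bar q q')$; the remaining parts are either immediate or a clean Bogomol'nyi and ODE computation.
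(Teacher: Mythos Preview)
Your argument is correct and, in two places, takes a genuinely different route from the paper.

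For the sharp identity $E_\delta=\tfrac43-2\delta+\tfrac23\delta^3$, the paper proceeds by the direct method: it passes to the half-line problem, extracts a minimizer via Banach--Alaoglu and Fatou, writes down the Euler--Lagrange ODE, and identifies the minimizer as a shifted $\tanh$ by quoting the classification in \cite{BethuelGravejatSaut}. Your Bogomol'nyi/diamagnetic lower bound $E(q)\ge 2\bigl(G(1)-G(|q(x_0)|)\bigr)$ together with the explicit competitor $q_\delta$ is shorter and entirely self-contained; it bypasses both the compactness step and the ODE citation. This is a genuine simplification.

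For the quantitative bound \eqref{eqn:260}, the paper does not argue via $w=1-|q|^2$ at all. It combines Lemma~\ref{lem:36} with Lemma~\ref{lem:3} to get $\inf_{\lambda\in\mathbb S^1}\|q-\lambda\|_{W^{s,2}(B_k)}\le C(s)\sqrt{E^s(q)}$ uniformly in $k$, and then reads off $1-\delta\le\inf_\lambda\|q-\lambda\|_{L^\infty(B_k)}$ from the Sobolev embedding. This delivers a uniform bound on $\|q\|_{L^\infty}$ for free, so no absorption is needed. Your route through the product estimate \eqref{eqn:14} and Young's inequality is more direct in spirit, and it closes cleanly for $s=1$; but in the range $\tfrac12<s<1$ the step you yourself flag --- establishing $q\in L^\infty$ so that \eqref{eqn:14} applies globally before the absorption can be performed --- is not resolved by the localization you sketch (on a ball one still needs to control $\|w\|_{L^2(B_0)}$, which the negative-order norm $\|w\|_{H^{s-1}}$ does not supply). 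One way to fill this is to first observe that $\|q'\|_{H^{s-1}}<\infty$ forces $q\in \dot C^{0,s-1/2}$ via $[q]_{\dot H^s}\le\|q'\|_{H^{s-1}}$, and then argue that $|q|\to\infty$ along a half-line would contradict $|q|^2-1\in H^{s-1}$; after that your absorption goes through. The paper's use of the $d^s$ machinery avoids this detour.
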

\begin{proof}
    We see that $E^s_1 = 0$ by choosing $q = 1$. Clearly the set over which the infimum is taken increases with $\delta$, and hence the infimum is decreasing.
    Recall that Lemma \ref{lem:36} implies
    \begin{equation*}
        \inf_{\lambda \in \mathbb{S}^1} \|q - \lambda\|_{W^{s,2}(B_k)} \leq C(s) \,d^s(1, q)\,,
    \end{equation*}
    where $B_k = B_0 + k, k \in \Z$ are balls of radius $1$. Estimating with Lemma \ref{lem:3} on the right and the Sobolev embedding $W^{1,2}(B_k) \xhookrightarrow{\quad} L^\infty$ on the left, we obtain
    \begin{equation*}
        1 - \delta \leq \sup_{k \in \Z} \inf_{\lambda \in \mathbb{S}^1} \|q - \lambda\|_{L^\infty(B_k)} \leq C(s) \sqrt{E^s(q)}
    \end{equation*}
    for every $q \in H^s_{\loc}$ with $\inf_{x \in \R} |q(x)| \leq \delta$. This proves (\ref{eqn:260}).
    
    Now we assume $s = 1$.
    We first rewrite the problem as $E_\delta = \inf_{\nu \in [0, \delta]} \tilde{E}_\nu$ with
    \begin{equation*}
        \tilde{E}_\nu = \inf\{E(q): q \in H^1_{\loc}, \inf_{x \in \R} |q(x)| = \nu\} \,, \qquad \nu \in [0,\delta]\,.
    \end{equation*}
    Of course we expect that $\tilde{E}_\nu$ is decreasing in $\nu$ and hence $E_\delta = \tilde{E}_\delta$. This will be verified once we have calculated $\tilde{E}_{\nu}$.
    Using invariance under translations, phase rotations, and mirror symmetry, we can equivalently consider the minimization problem
    \begin{equation*}
        \tilde{E}_\nu = 2 \inf \left\{ \frac12 \int_0^\infty |\del_x q|^2 + (|q|^2 - 1)^2 \dd x : q \in H^1_{\loc}(\R_\geq), q(0) = \nu \right\} \,.
    \end{equation*}
    We now follow the same arguments as in \cite[Lemma 1]{BethuelGravejatSaut} to find a minimizer. Consider a minimizing sequence $(q_n)_{n \in \N}$. As $E^s(q_n)$ is uniformly bounded, so is $\|\del_x q_n\|_{L^2(\R_\geq)}$. The Banach-Alaoglu theorem then implies, up to a subsequence, that $\del_x q_n \longrightarrow p_\nu'$ for some $p_\nu' \in L^2(\R_\geq)$. Furthermore as $q_n(0) = \nu$ is fixed, we have a Poincare inequality $\|q_n\|_{W^{1,2}(B_0)} \leq C(s, B)\, \|\del_x q_n\|_{L^2(B_0)} $ on any finite interval $B_0 \subset \R_\geq$. Then we can use compactness of the Sobolev embedding $H^1 \xhookrightarrow{\quad} L^\infty$ to find, up to a subsequence, that $q_n \longrightarrow p_\nu$ in $L^\infty_{\loc}(\R_\geq)$ for some $p_\nu \in H^1_{\loc}(\R_\geq)$, with $p_\nu'$ indeed being its distributional derivative. Now we can conclude with Fatou's lemma that $p_\nu$ is a minimizer for $\tilde{E}_{\nu}$:
    \begin{align*}
         \int_0^\infty (|p_0|^2 - 1)^2 + |p_0'|^2 \dd x &=  \int_0^\infty \liminf_n (|q_n|^2 - 1)^2 + \liminf_n |q_n'|^2 \dd x \\
        &\leq \liminf_n  \int_0^\infty (|q_n|^2 - 1)^2 + \liminf_n |q_n'|^2 \dd x \\
        &= E_\nu\,.
    \end{align*}
    For the case $\nu = 0$, we obtain the Euler-Lagrange equation
    \begin{equation*}
        p_0'' - 2 p_0 (1 - |p_0|^2) = 0\,.
    \end{equation*}
    Then as $p_0(0) = 0$ and $E(p_0) < \infty$, \cite[Theorem 1]{BethuelGravejatSaut} implies that $p_0 = \tanh$ is the unique solution. Consequently, it must be the case that for $a > 0$ the function $p_\nu(x) = \tanh(x + a)$ is a minimizer for the problem with $\nu = \tanh(a)$, as otherwise one could modify $p_0$ on $[r,\infty)$ to find an admissible function with strictly smaller energy for the minimization problem of $\tilde{E}_0$. This implies that the $q_\delta$ defined in (\ref{eqn:27}) are minimizers for $\tilde{E}_\nu$.
    
    With $a = \tanh^{-1}(\nu)$, and noting the identities
    \begin{equation*}
        \cosh(a) = \frac{1}{\sqrt{1 - \nu^2}}, \qquad \cosh(2 a) = - \frac{1 + \nu^2}{1 - \nu^2}, \qquad \sech^2(a) = 1 - \nu^2 \,,
    \end{equation*}
    we compute
    \begin{align*}
        E(q_\nu) &= 2 \cdot \frac12 \int_a^\infty (\tanh(x)')^2 + (\tanh(x)^2 - 1)^2 \dd x \\
        &= \int_a^\infty \sech^4(x) + \sech^4(x) \dd x \,.
    \end{align*}
    Evaluating the integral yields
    \begin{align*}
        E(q_\nu) &= \left[ \frac23 (\cosh(2x) + 2) \tanh(x) \sech^2(x) \right]_r^\infty \\
        &= \frac23 \Big(2 - \nu (1 - \nu^2) \Big(2 + \frac{1 + \nu^2}{1 - \nu^2}\Big) \Big) \\
        &= \frac43 - 2 \nu + \frac23 \nu^3 \,.
    \end{align*}
\end{proof}

\section{Littlewood-Paley theory and proof of Lemma \ref{lem:2}}

The proof uses the Bony decomposition
\begin{equation*}
    f g = T_f g + R(f, g) + T_g f \,,
\end{equation*}
which J.M. Bony introduced in his 1981 paper \cite{Bony}. It relies on the Littlewood-Paley theory, for which we refer the reader to \cite[Chp. 2]{BCD}. We give a brief introduction below, always only considering the one-dimensional case.

Let $\phi \in C_c^\infty(\{\xi: \frac34 < |\xi| < \frac83\}$ and $\chi \in C_c^\infty(\{\xi: |\xi| < \frac43\})$ be non-negative functions on $\R$ so that
\begin{equation*}
    \chi(\xi) + \sum_{j=0}^\infty \phi(2^{-j}) = 1 \,.
\end{equation*}
This is called a dyadic partition of unity. We define for $j \in \Z$ the operators
\begin{align*}
    \Delta_j: \mathcal{S}' &\longrightarrow \mathcal{S}' \\
    f &\longmapsto \Delta_j f
    = \begin{cases}
        \phi(2^{-j} \cdot) \widehat{f} &,j \geq 0 \\
        \chi \widehat{f} &,j = -1 \\
        0 &,j \leq -2
    \end{cases}
\end{align*}
and $S_j = \sum_{j' < j} \Delta_{j'}$. These operators have nice properties such as $\|S_j f\|_{L^p} \leq C(p) \|f\|_{L^p}$, $p \in [1, \infty]$.
At least formally, we have the decomposition
\begin{equation*}
    \Id = \lim_{j \rightarrow \infty} S_j = \sum_j \Delta_j \,.
\end{equation*}
The Bony decomposition is given by
\begin{equation*}
    f g = \sum_{j, k} \Delta_j f \Delta_k g = T_f g + R(f, g) + T_g f \,,
\end{equation*}
where we define
\begin{equation*}
    T_f g = \sum_j S_{j-1} f \Delta_j g \qquad \qquad R(f, g) = \sum_j \sum_{|\nu| \leq 1} \Delta_{j + \nu} f \Delta_j g \,.
\end{equation*}

In the Littlewood-Paley setting it is easy to define the Besov spaces $B_{p,q}^s$ for $1 \leq p, r \leq \infty$, $s \in \R$ by
\begin{equation*}
    B_{p,q}^s = \big\{f \in \mathcal{S}'(\R;\C): \|f\|_{B_{p,q}^s} < \infty \big\} \,,
\end{equation*}
where
\begin{equation*}
    \|f\|_{B_{p,q}^s} = \big\| \big(2^{j s} \|\Delta_j f\|_{L^p}\big)_{j \in \Z} \big\|_{\ell^r} \,.
\end{equation*}
It is evident that
\begin{equation*}
    B_{2,2}^s = \{f \in \mathcal{S}'(\R;\C): \|\langle \xi \rangle^s \widehat{f}\|_{L^2(\R)} < \infty\} = H^s(\R;\C)\,.
\end{equation*}

\begin{proof}[Proof of Lemma \ref{lem:2}]
    We only prove (\ref{eqn:14}) as the proof of (\ref{eqn:13}) is analogous and strictly simpler.
    Consider the decomposition $f g = g S_0 f + g (1 - S_0) f$.
    Since $S_0 f$ is spectrally supported in a fixed ball there exists a constant $N \in \N$ so that $\Delta_k(S_0 f \Delta_j g) = 0$ unless $|k - j| \leq N$. Consequently,
    \begin{align*}
        \|g S_0 f\|_{H^{s-1}}^2 &= \sum_{j \in \Z} 2^{2j(s-1)} \left\|\sum_{|\nu| \leq N} \Delta_j(S_0 f \Delta_{j + \nu} g)\right\|_{L^2}^2 \\
        &\leq C(N) \|S_0 f\|_{L^\infty} ^2 \sum_{j \in \Z} 2^{2j(s-1)}\|\Delta_k g\|_{L^2}^2 \\
        &= C(N) \|S_0 f\|_{L^\infty}^2 \|g\|_{H^{s-1}}^2 \,.
    \end{align*}
    
    It remains to estimate $\|g (1 - S_0) f\|_{H^{s-1}}$. To simplify notation, we now write $f$ for $(1 - S_0) f$ and derive an estimate by $\|f\|_{H^s}$.
    Note that $S_{j-1} f \Delta_j g$ is only non-zero if $j \geq 1$, and in that case it is a convolution of a ball with an annulus of much larger radius. As a result, there exists an annulus $\mathcal{C}$ so that $\mathcal{F}[S_{j-1} f \Delta_j g]$ is supported in $2^j \mathcal{C}$, and so \cite[Lemma 2.69]{BCD} implies
    \begin{equation*}
        \|T_f g\|_{H^{s-1}} \lesssim \big\| 2^{j(s-1)} \|S_{j-1} f \Delta_j g\|_{L^2} \big\|_{\ell^2(\Z)} \,.
    \end{equation*}
    Since
    \begin{equation*}
        \|S_{j-1} f \Delta_j g\|_{L^2} \leq \|S_{j-1} f\|_{L^\infty} \|\Delta_j g\|_{L^2} \leq \|f\|_{L^\infty} \|\Delta_j g\|_{L^2} \,,
    \end{equation*}
    this implies 
    \begin{equation*}
        \|T_f g\|_{H^{s-1}} \lesssim \|g\|_{H^{s-1}} \|f\|_{L^\infty} \,.
    \end{equation*}
    For the same reason as before, we have
    \begin{equation*}
        \|T_g f\|_{H^{s-1}} \lesssim \big\| 2^{j(s-1)} \|S_{j-1} g \Delta_j f\|_{L^2} \big\|_{\ell^2} \,.
    \end{equation*}
    Here we consider two cases. 
    If $s \leq 1$ then we use the Bernstein inequality \cite[Lemma 2.1]{BCD}. It states that
    \begin{equation*}
        \supp \widehat{u} \subset \lambda B \Longrightarrow \|u\|_{L^\infty} \leq C(B) \lambda^{\frac12} \|u\|_{L^2}
    \end{equation*}
    for any fixed ball $B$. This yields
    \begin{align*}
        2^{j(s-1)} \|S_{j-1} g \Delta_j f\|_{L^2} &\leq 2^{j(s-1)} \|S_{j-1} g\|_{L^2} \|\Delta_j f\|_{L^\infty} \\
        &\lesssim \|S_{j-1} g\|_{H^{s-1}} 2^{\frac{j}{2}} \|\Delta_j f\|_{L^2} \\
        &\lesssim \|g\|_{H^{s-1}} 2^{\frac{j}{2}} \|\Delta_j f\|_{L^2} \,.
    \end{align*}
    Here we have used
    \begin{equation*}
        2^{2j(s-1)} \|S_{j-1} g\|_{L^2}^2 = \sum_{j' < j - 1} \underset{\leq 1}{\underbrace{2^{2(j - j')(s-1)}}}  2^{2j'(s-1)} \|\Delta_{j'} g\|_{L^2}^2 \leq \|S_{j-1} g\|_{H^{s-1}}^2 \leq \|g\|_{H^{s-1}}^2 \,.
    \end{equation*}
    We see that
    \begin{equation*}
        \|T_g f\|_{H^{s-1}} \lesssim \|g\|_{H^{s-1}} \|f\|_{H^{\frac12}} \,.
    \end{equation*}
    For the case $s > 1$ we estimate
    \begin{align*}
        2^{j(s-1)} \|S_{j-1} g \Delta_j f\|_{L^2} &\leq 2^{j(s-1)} \|S_{j-1} g\|_{L^\infty} \|\Delta_j f\|_{L^2} \\
        &\lesssim \|2^{-j} S_{j-1} g\|_{H^1} 2^{js} \|\Delta_j f\|_{L^2} \\
        &\lesssim \|g\|_{L^2} 2^{js} \|\Delta_j f\|_{L^2} \\
    \end{align*}
    and obtain
    \begin{equation*}
        \|T_g f\|_{H^{s-1}} \lesssim \|g\|_{H^{s-1}} \|f\|_{H^s} \,.
    \end{equation*}
    
    It remains to estimate the remainder terms $R(f, g)$. Here let it be noted that there exists an integer $N > 0$, independent of $j$, so that $\sum_{|\nu| \leq 1} \Delta_{j-\nu} f \Delta_j g$ is spectrally supported in a ball of radius $2^{j + N - 1}$. In this case we know by \cite[Lemma 2.84]{BCD} that
    \begin{equation} \label{eqn:28}
        \|R(f, g)\|_{B_{p,r}^{\tilde{s}}} \leq C(p,r,\tilde{s}) \Big\| 2^{j \tilde{s}} \big\| \sum_{|\nu| = 1} \Delta_{j - \nu} g \Delta_j f \big\|_{L^p} \Big\|_{\ell^r(\Z)}
    \end{equation}
    for $\tilde{s} > 0$. This does not work in general if $\tilde{s} < 0$.
    Therefore, we use the embedding 
    \begin{equation*}
        \|R(f, g)\|_{H^{s-1}} \leq C(s) \|R(f, g)\|_{B_{1,1}^{s-\frac12}}
    \end{equation*}
    in order to apply (\ref{eqn:28}) with $\tilde{s} = s - \frac12 > 0$ and $p, r = 1$. 
    Now we can conclude via H\"older's and Young's inequalities for sequences:
    \begin{align*}
        \allowdisplaybreaks
        \Big\| 2^{j(s-\frac12)} \Big\| \sum_{|\nu| = 1} \Delta_{j - \nu} f \Delta_j g \Big\|_{L^1} \Big\|_{\ell^1(\Z)} 
        &\lesssim
        \sum_{|\nu| \leq 1} 2^{\frac{\nu}{2}} \big\| 2^{\frac{j - \nu }{2}} \|\Delta_{j - \nu} f\|_{L^2} \big\|_{\ell^2(\Z)} \big\| 2^{j(s-1)} \|\Delta_j g\|_{L^2} \big\|_{\ell^2(\Z)} \\
        &\lesssim \|g\|_{H^{s-1}} \|f\|_{H^{\frac12}} \,.
    \end{align*}
    
\end{proof}

\section{Uniqueness for the Gross-Pitaevskii equation}

\begin{proof}[Proof of Lemma \ref{lem:71}]
    Recall that $q_1, q_2 \in C(I; L^2_{\loc}) \cap L^\infty(I; L^\infty \cap \dot{H}^1)$ are two distributional solutions of (GP) on an open interval $I \ni 0$ with the same initial data $q_0$. It follows that $q_1, q_2 \in L^\infty(I; W^{1,2}(B))$ for any arbitrary ball $B \subset \R$. We define $b = q_1 - q_2$ and compute that it solves in distribution the following equation:
    \begin{align*}
        i \del_t b + \del_{xx} b &= 2 q_1 (|q_1|^2 - 1) - 2 q_2 ( |q_2|^2 - 1) \\
        &= 2 b (|q_1|^2 - 1) + 2 b (|q_2|^2 - 1) + 2 q_2 (|q_1|^2 - 1) - 2 q_1 (|q_2|^2 - 1) \\
        &= 2 b (|q_1|^2 + |q_2|^2 - 2 + 1) + 2 (q_2 |q_1|^2 - q_1 |q_2|^2) \\
        &= 2b ((b + q_2) \overline{(b + q_2)} + |q_2|^2 - 1) + 2 (q_2 |b + q_2|^2 - (b + q_2) |q_2|^2) \\
        &= 2b (|b|^2 + b \overline{q_2} + \overline{b} q_2 + 2 |q_2|^2 - 1) \\
        &+ 2 (q_2 |b|^2 + b |q_2|^2 + \overline{b} q_2^2 + q_2 |q_2|^2 - b |q_2|^2 - q_2 |q_2|^2) \\
        &= 2b (|b|^2 + b \overline{q_2} + \overline{b} q_2 + 2 |q_2|^2 - 1)
        + 2 (q_2 |b|^2 + \overline{b} q_2^2) \\
        &= 2 |b|^2 b + 4 |b|^2 q_2 + 2 b^2 \overline{q_2} + 2 b (2 |q_2|^2 - 1) + 2 \overline{b} q_2^2 \,.
    \end{align*}
    We know that $\del_{xx} b \in L^\infty(I; W^{-1,2}(B))$. Then $b$ solving the equation implies $\del_t b \in L^\infty(I; W^{-1,2}(B))$. Using duality and the algebra property of $W^{1,2}(B)$, we find that also $\overline{b} \, \del_t b, \del_t (|b|^2) \in L^\infty(I; W^{-1,2}(B))$.
    
    Let $\phi_n(x) = \phi(\frac{x}{n})$ where $\phi \in C_c^\infty([-2, 2]; [0,1])$ and $\phi\big\vert_{[-1,1]} = 1$. One may choose $\phi$ in such a way that there exists $K > 0$ with $|\del_x \phi| \leq K \sqrt{\phi}$ and in particular $|\del_x \phi_n| \leq K n^{-1} \sqrt{\phi_n}$. We test the above with $\overline{b} \phi_n$ and take the imaginary part. On the left-hand side, we have
    \begin{align*}
        \int_{\R} \Imag[i (\del_t b) \overline{b}\phi_n] + \Imag[(\del_{xx} b) \overline{b} \phi_n] \dd x &= \int_{\R} \frac12 \phi_n \del_t (|b|^2) - \Imag[ (\del_x b) \overline{b} \del_x \phi_n] \dd x \,.
    \end{align*}
    Therefore for a fixed time $t \in I$,
    \begin{align*}
        \frac12 \frac{d}{dt} \int_{\R} \phi_n |b|^2 \dd x &= \int_{\R} \Imag[(\del_x b) \overline{b} \del_x \phi_n] \dd x \\
        &+ \int_{\R} \big(2 |b|^4 + 4 |b|^2 \overline{b} q_2 + 2 |b|^2 b \overline{q_2} + 2 |b|^2 (2 |q_2|^2 - 1) + 2 \overline{b}^2 q_2^2 \big) \phi_n \dd x \\
        &= (I) + (II) \,.
    \end{align*}
    We estimate 
    \begin{equation*}
        (I) \leq \|\del_x b\|_{L^2_x} \|b \del_x \phi_n\|_{L^2_x} \leq \big(\|q_1\|_{L^\infty_t \dot{H}^1_x} + \|q_2\|_{L^\infty_t \dot{H}^1_x}\big) K n^{-1} \big\|b \sqrt{\phi_n}\big\|_{L^2_x}
    \end{equation*}
    and
    \begin{align*}
        (II) &\leq C \big\|b \sqrt{\phi_n}\|_{L^2_x}^2 \big\| \big\| |b|^2 + |b| |q_2| + |q_2|^2 + 1 \big\|_{L^\infty_{t,x}} \\
        &\leq C \big\|b \sqrt{\phi_n}\big\|_{L^2_x}^2 \big( 1 + \|q_1\|_{L^\infty_{t,x}}^2 + \|q_2\|_{L^\infty_{t,x}}^2 \big) \,.
    \end{align*}
    We have shown that there exists some $C > 0$, depending on $q_1, q_2$ but independent of time, such that
    \begin{equation*}
        \frac12 \frac{d}{dt} \big( \|b \sqrt{\phi_n}\|_{L^2_x}^2 \big) \leq C \left(\frac{1}{\sqrt{n}} \|b \sqrt{\phi_n}\|_{L^2_x} + \|b \sqrt{\phi_n}\|_{L^2_x}^2 \right) \,.
    \end{equation*}
    In particular
    \begin{equation*}
        \frac{d}{dt} \|b \sqrt{\phi_n}\|_{L^2_x} \leq C \left( \frac{1}{\sqrt{n}} + \|b \sqrt{\phi_n}\|_{L^2_x} \right) \,.
    \end{equation*}
    Now Gr\"onwall's inequality implies for any fixed $t > 0$ that
    \begin{equation*}
        \|b(t)\|_{L^2_x} \xleftarrow{n \rightarrow \infty} \|b(t) \sqrt{\phi_n}\|_{L^2_x} \leq \Bigg(\underset{ = 0}{\underbrace{\|b(0) \sqrt{\phi_n}\|_{L^2_x} }}+ C \frac{t}{\sqrt{n}} \Bigg) e^{C t} \xrightarrow{n \rightarrow \infty} 0\,,
    \end{equation*}
    hence $q_1 = q_2$ for positive times. The argument for negative times is analogous.
\end{proof}

\printbibliography

\end{document}